\newcommand*\bigcdot{\mathpalette\bigcdot@{.5}}
\newcommand*\bigcdot@[2]{\mathbin{\vcenter{\hbox{\scalebox{#2}{$\m@th#1\bullet$}}}}}
\numberwithin{equation}{section}
\theoremstyle{plain}
\newtheorem{prop}{Proposition}[section]
\newtheorem{thm}[prop]{Theorem}
\newtheorem{cor}[prop]{Corollary}
\newtheorem{lem}[prop]{Lemma}
\newtheorem{defn}[prop]{Definition}
\theoremstyle{definition}
\newtheorem{rem}[prop]{Remark}
\newtheorem{assump}[prop]{Assumption}
\def\Hom{{{\rm Hom}\,}}
\def\add{{{\rm add}\,}}
\def\Aut{{{\rm Aut}\,}}
\def\coh{{\rm coh\,}}
\def\vect{{\rm vect\,}}
\def\add{{\rm add}}
\def\Aut{{ \rm Aut }}
\def\Hom{{\rm Hom}}
\def\Aut{{\rm Aut}}
\begin{document}

\title[{Semi-orthogonal decompositions via t-stabilities}]
{Semi-orthogonal decompositions via t-stabilities}

\author[Mingfa Chen] {Mingfa Chen}

\address{Mingfa Chen;\; Department of Mathematical Sciences, Tsinghua University, Beijing 100084, China.}
\email{\textbf{\large mfchen-tsinghua@outlook.com}}

\keywords{Semi-orthogonal decomposition; t-stability; admissible filtration; exceptional sequence}

\date{\today}
\makeatletter \@namedef{subjclassname@2020}{\textup{2020} Mathematics Subject Classification} \makeatother
\subjclass[2010]{16E35, 16G60, 18G80.}

\begin{abstract}
In this paper we introduce a local-refinement procedure to investigate finite t-stabilities on a triangulated category, and show a direct sufficient condition for a finite t-stability to be finite finest.
We classify all finite finest t-stabilities for certain triangulated categories, including those from the projective plane, weighted projective lines, and finite acyclic quivers.
As applications, we obtain a simple classification of semi-orthogonal decompositions (SOD) for these categories.
Furthermore, we study the connectedness of SODs by a reduction method and give an easily verified criterion for it.
\end{abstract}

\maketitle
\section{Introduction}
Semi-orthogonal decompositions (SOD for short) introduced by Bondal and Orlov in \cite{bono} provide a powerful tool to study the minimal model program
and become a cornerstone in birational geometry and derived categories.
Bergh and Schn$\mathrm{\ddot{u}}$rer \cite{bers} developed a conservative descent technique to study SODs in abstract triangulated categories and algebraic stacks via relative Fourier--Mukai transforms.
Toda \cite{toda} constructed a class of SODs for Pandharipande--Thomas stable pair moduli spaces on Calabi--Yau 3-folds, categorifying the wall-crossing formula for Donaldson--Thomas invariants.
Kuznetsov and Perry \cite{kuzp} showed that categorical joins of moderate Lefschetz varieties inherit a SOD (Lefschetz decomposition), and this construction is compatible with homological projective duality.
It turns out that semi-orthogonal decompositions play a significant role in the study of Bridgeland stability condition, Hochschild homology and cohomology, Hodge theory, mirror symmetry, motives, see, for example, \cite{anel,bei,belm,bers,bodb,cp,kite,lpz,oka}.

Admissible subcategories are a key component of the construction of SODs.
Pirozhkov \cite{pir} proved that admissible subcategories of the projective plane are generated by exceptional sequences and that del Pezzo surfaces admit no such subcategories with the trivial Grothendieck group, known as phantom subcategories.
Krah \cite{krah} constructed a blow-up of the projective plane with both an exceptional sequence and a phantom subcategory, providing a counterexample to the conjectures posted by Kuznetsov \cite{kuzn} and Orlov \cite{orl}.

It seems a lack of detailed description of SODs in the literature for triangulated categories arising from representation theory of algebras, such as the derived category of coherent sheaves on a weighted projective line introduced by Geigle and Lenzing \cite{gl}, which is derived equivalent to Ringel's canonical algebra \cite{ring}.
This motivates our study of SODs in these categories.

The notion of stability data, originating from Geometric Invariant Theory and introduced by Mumford \cite{mum}, was initially used to construct moduli spaces of vector bundles on algebraic curves.
Gorodentsev, Kuleshov, and Rudakov \cite{gkr} later generalized this to
t-stability on triangulated categories, extending Gieseker's stability for torsion-free sheaves and enabling the study of various moduli problems.
They showed that t-stability effectively classifies bounded $t$-structures for the projective line and elliptic curves.
Bridgeland \cite{tbs,tbsp} extended this to stability conditions, while recent work \cite{rw} highlights the role of t-stability in proving the existence of t-exceptional triples and the connectedness and contractibility of Bridgeland's stability space for weighted projective lines.

The aim of this paper is to study finite finest t-stabilities on triangulated categories and apply them to obtain a simple classification of SODs.
We define a partial ordering for the set of finite t-stabilities on a triangulated category, minimal elements with respect to this partial ordering will be called the finite finest t-stabilities.
We classify all finite finest t-stabilities for triangulated categories with a Serre functor under a certain assumption.
This gives a direct classification of SODs for these categories.
Moreover, we study the connectedness of the mutation graph of SODs and give a criterion to determine when it is connected.

The paper is organized as follows.
Section 2 collects the definitions and properties of SODs and admissible subcategories.
In Section 3, we study the relationship between finite t-stabilities and SODs, introduce partial orders on their sets, and describe a procedure for their local refinement.
Section 4 investigates the mutation relation between  partially ordered SODs and admissible filtrations and shows a bijection between them.
In Section 5, we obtain a classification of SODs in terms of exceptional sequences in a triangulated category with a Serre functor.
Section 6 shows reduction methods for studying SODs and their mutation graphs.
Finally, Section 7 presents examples.

\emph{Notations.}
Throughout this paper, we fix an algebraically closed field $\textbf{k}$.
A triangulated category $\mathcal{D}$ is always assumed to be $\textbf{k}$-linear.
We use $\Phi_{n}$ to denote the linearly ordered set $\{1, 2, \cdots,n\}$ in the sense that $1<2<\cdots <n$.
Given a set $\mathcal{S}$ of objects in $\mathcal{D}$, we write $\langle \mathcal{S} \rangle$ for the full triangulated subcategory of $\mathcal{D}$ generated by the objects in $\mathcal{S}$ closed \emph{under extensions,
direct summands and shifts}.

Let $\mathcal{B}\subset\mathcal{D}$ be a full triangulated subcategory. The right orthogonal $\mathcal{B}^{\bot}$ to $\mathcal{B}$ is
$$\mathcal{B}^{\bot}:=\{X\in\mathcal{D}\mid \Hom(B,X)=0\;\,\text{for all}\;\,B\in\mathcal{B}\;\,\}.$$
The left orthogonal ${}^{\bot}\mathcal{B}$ is defined similarly.

\section{Semi-orthogonal decompositions and admissible subcategories}
This section recalls properties of SODs and admissible subcategories, and makes explicit a bijection between admissible SODs and finite strongly admissible filtrations previously implicit in \cite{bonk}.

\begin{defn}[\cite{bono}]\label{defn of SOD}
A \emph{semi-orthogonal decomposition} (\emph{SOD} for short) of a triangulated category $\mathcal{D}$ is a sequence of full triangulated
subcategories $\Pi _{1}, \Pi _{2}, \cdots, \Pi _{n}$ such that
\begin{itemize}
   \item [(1)] $\mathrm{Hom}(\Pi _{j},\Pi_{i})=0$ for any $1\leq i<j\leq n$;
  \item [(2)] $\mathcal{D}=\langle\Pi_{i}\mid i\in\Phi_n\rangle$.
\end{itemize}
\end{defn}

By definition, $\mathcal{D}$ admits two trivial SODs, $(\mathcal{D},0)$ and $(0,\mathcal{D})$; all others are called non-trivial and are considered in this paper. We write a SOD as $(\Pi_i;i\in\Phi_n)$, where for each $i$,
$$\Pi_i = {}^\bot\langle \Pi_1,\dots,\Pi_{i-1} \rangle \cap \langle \Pi_{i+1},\dots,\Pi_n \rangle^\bot.$$

\begin{defn}[\cite{bonk}]
A full triangulated subcategory $\mathcal{A}\subset\mathcal{D}$ is called \emph{right admissible} if, for the inclusion
functor $i:\mathcal{A}\hookrightarrow\mathcal{D}$, there is a right adjoint $i^{!}:\mathcal{D}\rightarrow\mathcal{A}$, and \emph{left admissible} if there is a left adjoint
$i^{*}:\mathcal{D}\rightarrow\mathcal{A}$. It is called \emph{admissible} if it is both left and right admissible.
\end{defn}

\begin{lem} [\cite{bond,bonk}]\label{right addmiss and sod}
Let $\mathcal{D}$ be a triangulated category.
\begin{itemize}
  \item [(1)] If $(\mathcal{A},\mathcal{B})$ is a SOD of $\mathcal{D}$, then $\mathcal{A}$ is left admissible and $\mathcal{B}$ is right admissible.
  \item [(2)] If $\mathcal{A}\subset\mathcal{D}$ is left admissible, then $(\mathcal{A},{}^{\bot}\mathcal{A})$ is a
SOD and $({}^{\bot}\mathcal{A})^{\bot}=\mathcal{A}$, and if $\mathcal{B}\subset\mathcal{D}$ is right admissible, then $(\mathcal{B}^{\bot},\mathcal{B})$ is a SOD and ${}^{\bot}(\mathcal{B}^{\bot})=\mathcal{B}$.
\end{itemize}
\end{lem}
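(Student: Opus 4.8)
The plan is to prove both statements by hand, exploiting the defining orthogonality and generation properties of a SOD together with the adjoint-functor characterization of admissibility. For (1), suppose $(\mathcal{A},\mathcal{B})$ is a SOD, so $\Hom(\mathcal{B},\mathcal{A})=0$ and every object $X\in\mathcal{D}$ fits into a triangle with terms built from $\mathcal{B}$ and $\mathcal{A}$; concretely, I would first argue that for each $X$ there is a triangle $B_X\to X\to A_X\to B_X[1]$ with $B_X\in\mathcal{B}$, $A_X\in\mathcal{A}$, obtained by a standard devissage using condition (2) and the semi-orthogonality (this is the "two-step filtration" that a length-two SOD forces). I would then check that the assignment $X\mapsto A_X$ is functorial and provides a left adjoint $i^*$ to the inclusion $i:\mathcal{A}\hookrightarrow\mathcal{D}$: given $A\in\mathcal{A}$, applying $\Hom(-,A)$ to the triangle and using $\Hom(B_X,A)=\Hom(B_X[1],A)=0$ shows $\Hom(X,A)\xrightarrow{\sim}\Hom(A_X,A)$, which is precisely the adjunction isomorphism. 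Hence $\mathcal{A}$ is left admissible; the dual argument, using instead the triangle to extract $\Hom(A,B_X)\xleftarrow{\sim}\Hom(A,X)$ for $A\in\mathcal{A}$... wait, more carefully: for $\mathcal{B}$ right admissible I build the right adjoint $i^!$ from $X\mapsto B_X$ and verify $\Hom(B,X)\xrightarrow{\sim}\Hom(B,B_X)$ for $B\in\mathcal{B}$ using $\Hom(B,A_X)=\Hom(B,A_X[-1])=0$.

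For the uniqueness and functoriality of the triangle $B_X\to X\to A_X$, I would note that any two such triangles are canonically isomorphic: a morphism $X\to X'$ composed into $A_{X'}$ kills the $B_X$ part by semi-orthogonality, so it factors uniquely through $A_X$, and symmetrically on the $\mathcal{B}$ side; this gives the required functors and simultaneously shows the two truncation functors are well-defined.

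For (2), assume $\mathcal{A}$ is left admissible with left adjoint $i^*$. I would set $\mathcal{B}:={}^\bot\mathcal{A}$ and verify the two SOD axioms for $(\mathcal{A},\mathcal{B})$: axiom (1), $\Hom(\mathcal{B},\mathcal{A})=0$, is immediate from the definition of ${}^\bot\mathcal{A}$. For axiom (2), given $X\in\mathcal{D}$ I use the unit of the adjunction $X\to i i^* X$ and complete it to a triangle $K\to X\to ii^*X\to K[1]$; applying $\Hom(-,A)$ for $A\in\mathcal{A}$ and the adjunction $\Hom(ii^*X,A)\cong\Hom(i^*X,i^*... )$ — rather, $\Hom(X,iA)\cong\Hom(i^*X,A)\cong\Hom(ii^*X,iA)$ since $i$ is fully faithful — shows the connecting map forces $\Hom(K,A)=0$, i.e. $K\in{}^\bot\mathcal{A}=\mathcal{B}$, so $X\in\langle\mathcal{A},\mathcal{B}\rangle$. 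This establishes that $(\mathcal{A},{}^\bot\mathcal{A})$ is a SOD. The identity $({}^\bot\mathcal{A})^\bot=\mathcal{A}$ then follows: the inclusion $\mathcal{A}\subseteq({}^\bot\mathcal{A})^\bot$ is axiom (1), and conversely if $Y\in({}^\bot\mathcal{A})^\bot$, writing the SOD triangle $K_Y\to Y\to ii^*Y$ with $K_Y\in{}^\bot\mathcal{A}$ gives $\Hom(K_Y,Y)=0$, which splits the triangle and forces $K_Y=0$, so $Y\cong ii^*Y\in\mathcal{A}$. The statement for right admissible $\mathcal{B}$ is obtained by the dual argument (or by passing to the opposite category), using the counit $ii^!X\to X$ in place of the unit.

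The main obstacle is purely bookkeeping: making precise the functoriality of the truncation triangles in part (1) and getting all the variance of the $\Hom$-adjunction isomorphisms right, especially keeping track of which shifts vanish by semi-orthogonality; there is no conceptual difficulty, and indeed all of this is classical (Bondal--Kapranov, Bondal--Van den Bergh), so I would present it compactly and refer to \cite{bond,bonk} for the routine verifications.
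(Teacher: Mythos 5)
Your argument is correct and is essentially the classical Bondal/Bondal--Kapranov proof; the paper itself offers no proof of this lemma, quoting it directly from \cite{bond,bonk}, so there is nothing in the paper to diverge from. The only points you compress --- the devissage producing the triangle $B_X\to X\to A_X$ from the generation condition $\mathcal{D}=\langle\mathcal{A},\mathcal{B}\rangle$ (which is precisely the two-term case of the paper's Proposition \ref{finite id t-stab}) and the final step forcing $K_Y=0$ (the inclusion $K_Y[1]\to ii^{*}Y$ lands in $\mathcal{A}$, hence vanishes, so $\mathrm{id}_{K_Y}=0$) --- are routine and are exactly the verifications carried out in the cited references.
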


Recall from \cite{bonk} that a finite sequence of full triangulated subcategories
\begin{equation}\label{admissble subcats seq}
\mathscr{T}:\;
0=\mathcal{T}_0\subsetneq\mathcal{T}_1\subsetneq\cdots\subsetneq\mathcal{T}_{n-1}
\subsetneq\mathcal{T}_{n}=\mathcal{D}
\end{equation} is called (\emph{right, left}) \emph{admissible} if each $\mathcal{T}_i$ is (right, left) admissible as a full subcategory of  $\mathcal{T}_{i+1}$ for $1\leq i\leq n-1$.
Furthermore, the finite right admissible filtration is called \emph{strongly admissible} (\emph{s-admissible} for short) if each $\mathcal{T}^{\bot}_{i}\cap\mathcal{T}_{i+1}$ is admissible in $\mathcal{D}$.
Similarly, a SOD $(\Pi_i;i\in\Phi_n)$ is called
\emph{admissible} if each $\Pi_{i}$ is admissible, cf. \cite{bono}.
We remark that the notion of an admissible SOD coincides with that of a finite directed preordered SOD under the reformulated definition given in \cite{sst}.

Given a SOD $(\Pi_{i};i\in\Phi_n)$ of $\mathcal{D}$, we know that $(\Pi_{\leq i-1},\Pi_{\geq i})$ forms a SOD for each $1\leq i\leq n$, with $\Pi_{\leq i-1}:=\langle \Pi_{j}\mid j\leq i-1\rangle$ and $\Pi_{\geq i}:=\langle \Pi_{j}\mid j\geq i\rangle$. By Lemma \ref{right addmiss and sod}, each $\Pi_{\geq i}$ is right admissible, and there exists a right admissible filtration:
\begin{equation}\label{defn of induced chain of right admissible subcategories}
0\subsetneq\Pi_{\geq n }\subsetneq\cdots\subsetneq\Pi_{\geq 2 }\subsetneq
\Pi_{\geq 1}=\mathcal{D}.
\end{equation}
This defines a map
$$\xi: \{{\text{SODs of }} \mathcal{D}\}\longrightarrow \{\text{finite right admissible filtrations in } \mathcal{D}\}.$$

The following result, implicitly stated in \cite{bonk}, will be used.

\begin{prop} \label{finit t-stab and finite right admiss}
Keep notations as above.
Then $\xi$ is bijective and restricts to a bijection:
$$\begin{array}{lcr}
\xi:\left\{
\mbox{admissible SODs of $\mathcal{D}$}
\right\}&
\stackrel{1:1}{\longleftrightarrow}&
\left\{ \mbox{finite s-admissible filtrations in $\mathcal{D}$} \right\}
\end{array}.$$
\end{prop}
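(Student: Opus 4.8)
The plan is to construct an explicit inverse to $\xi$ and check it respects the two distinguished subclasses. Given a finite right admissible filtration $\mathscr T:\,0=\mathcal T_0\subsetneq\mathcal T_1\subsetneq\cdots\subsetneq\mathcal T_n=\mathcal D$, set $\Pi_i:=\mathcal T_{n-i}^\bot\cap\mathcal T_{n-i+1}$ for $i\in\Phi_n$; equivalently, reading from the top, $\Pi_n$ is the right-orthogonal complement of $\mathcal T_{n-1}$ inside $\mathcal T_n=\mathcal D$, then $\Pi_{n-1}$ is the complement of $\mathcal T_{n-2}$ inside $\mathcal T_{n-1}$, and so on. Since each $\mathcal T_i$ is right admissible in $\mathcal T_{i+1}$, Lemma \ref{right addmiss and sod}(2) applied inside the ambient category $\mathcal T_{i+1}$ shows that $(\mathcal T_i^\bot\cap\mathcal T_{i+1},\,\mathcal T_i)$ is a SOD of $\mathcal T_{i+1}$. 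First I would prove, by induction on $n$, that the sequence $(\Pi_i;i\in\Phi_n)$ so defined is a SOD of $\mathcal D$: semi-orthogonality $\Hom(\Pi_j,\Pi_i)=0$ for $i<j$ follows because $\Pi_j\subseteq\mathcal T_{n-i}$ while $\Pi_i\subseteq\mathcal T_{n-i}^\bot$, and generation $\mathcal D=\langle\Pi_i\mid i\in\Phi_n\rangle$ follows from the SOD $(\Pi_n,\mathcal T_{n-1})$ together with the inductive hypothesis applied to the truncated filtration $0=\mathcal T_0\subsetneq\cdots\subsetneq\mathcal T_{n-1}$, which by induction gives $\mathcal T_{n-1}=\langle\Pi_1,\dots,\Pi_{n-1}\rangle$. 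This produces a map $\eta$ from finite right admissible filtrations to SODs.

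Next I would verify $\xi\circ\eta=\mathrm{id}$ and $\eta\circ\xi=\mathrm{id}$. For $\eta\circ\xi=\mathrm{id}$: starting from a SOD $(\Pi_i;i\in\Phi_n)$, the filtration $\xi(\Pi_i;i\in\Phi_n)$ has $\mathcal T_{n-i+1}=\Pi_{\geq i}$, so $\mathcal T_{n-i}^\bot\cap\mathcal T_{n-i+1}=\Pi_{\geq i+1}^\bot\cap\Pi_{\geq i}$; one checks this equals $\Pi_i$ using the defining formula $\Pi_i={}^\bot\langle\Pi_1,\dots,\Pi_{i-1}\rangle\cap\langle\Pi_{i+1},\dots,\Pi_n\rangle^\bot$ recorded just after Definition \ref{defn of SOD}, together with the fact that inside the SOD $(\Pi_{\leq i-1},\Pi_{\geq i})$ one has $\Pi_{\geq i}\cap{}^\bot\langle\Pi_1,\dots,\Pi_{i-1}\rangle=\Pi_{\geq i}$ automatically. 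For $\xi\circ\eta=\mathrm{id}$: starting from $\mathscr T$, the SOD $\eta(\mathscr T)$ has $\Pi_{\geq i}=\langle\Pi_i,\Pi_{i+1},\dots,\Pi_n\rangle$, and I must show this equals $\mathcal T_{n-i+1}$; but $\langle\Pi_i,\dots,\Pi_n\rangle=\langle\mathcal T_{n-i}^\bot\cap\mathcal T_{n-i+1},\,\mathcal T_{n-i}^\bot\cap\mathcal T_{n-i+2},\dots\rangle$ and the SOD $(\mathcal T_{n-i}^\bot\cap\mathcal T_{n-i+1},\mathcal T_{n-i})$ of $\mathcal T_{n-i+1}$ gives $\langle\mathcal T_{n-i}^\bot\cap\mathcal T_{n-i+1},\mathcal T_{n-i}\rangle=\mathcal T_{n-i+1}$, from which a short induction peeling off one step at a time yields $\Pi_{\geq i}=\mathcal T_{n-i+1}$. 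This establishes the first bijection.

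Finally I would treat the restriction to the admissible subclasses. If $(\Pi_i;i\in\Phi_n)$ is an admissible SOD, then each $\Pi_i$ is admissible in $\mathcal D$; since $\mathcal T_{n-i}^\bot\cap\mathcal T_{n-i+1}=\Pi_i$ in the filtration $\xi(\Pi_i;i\in\Phi_n)$, the defining condition for s-admissibility is exactly that each such intersection be admissible in $\mathcal D$, so $\xi(\Pi_i;i\in\Phi_n)$ is s-admissible; moreover right admissibility of $\Pi_{\geq i}$ in $\mathcal D$ was already noted before the statement, and left admissibility of each $\mathcal T_i$ in $\mathcal T_{i+1}$ follows because $\mathcal T_i=\Pi_{\geq n-i+1}$ is, being generated by admissible pieces, admissible in $\mathcal D$ hence in $\mathcal T_{i+1}$. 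Conversely, if $\mathscr T$ is s-admissible then each $\Pi_i=\mathcal T_{n-i}^\bot\cap\mathcal T_{n-i+1}$ is by definition admissible in $\mathcal D$, so $\eta(\mathscr T)$ is an admissible SOD. Combined with the already-proven bijectivity of $\xi$, this gives the claimed restricted bijection. The main obstacle I anticipate is the bookkeeping in the two induction arguments of the previous paragraph — in particular, making sure that orthogonality relations computed inside an intermediate subcategory $\mathcal T_{i+1}$ agree with those computed in $\mathcal D$, which uses that $\mathcal T_i\subseteq\mathcal T_{i+1}$ is a full triangulated subcategory so that $\Hom$-groups are unambiguous; everything else is a careful but routine unwinding of the definitions.
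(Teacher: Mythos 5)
Your proposal is correct and follows essentially the same route as the paper: you construct the inverse by setting $\Pi_i=\mathcal{T}_{n-i}^{\bot}\cap\mathcal{T}_{n-i+1}$, use the SOD $(\mathcal{T}_i^{\bot}\cap\mathcal{T}_{i+1},\mathcal{T}_i)$ of $\mathcal{T}_{i+1}$ coming from right admissibility to get semi-orthogonality and generation, and then match admissible SODs with s-admissible filtrations via the identity $\Pi_{\geq i+1}^{\bot}\cap\Pi_{\geq i}=\Pi_i$. The only differences are cosmetic (your extra check of left admissibility is redundant for the paper's definition of s-admissibility, and your inductive bookkeeping spells out what the paper dismisses as an easy verification).
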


\begin{proof}
We construct the inverse map of $\xi$ as follows. Let
$\mathscr{T}:\;
0=\mathcal{T}_0\subsetneq\mathcal{T}_1\subsetneq\cdots\subsetneq\mathcal{T}_{n-1}
\subsetneq\mathcal{T}_{n}=\mathcal{D}$
be a finite right  admissible filtration in $\mathcal{D}$.
Denote by \begin{equation*} \label{map between finite admiss subcat seq and sod}
\Pi_{n}=\mathcal{T}_1 \quad\text{and}\quad \Pi_{i}=\mathcal{T}^{\bot}_{n-i}\cap\mathcal{T}_{n-i+1}\quad
\text{for}\quad1\leq i\leq n-1.
\end{equation*}
By \cite[Lem. 3.1]{bond}, the right admissibility of
$\mathcal{T}_{i}$  in $\mathcal{T}_{i+1}$ implies that
$\mathcal{T}_{i+1}=\langle \mathcal{T}^{\bot}_{i}\cap\mathcal{T}_{i+1},\mathcal{T}_{i} \rangle$
for $1\leq i\leq n-1$. Thus, $\mathcal{T}_{i}=\langle \Pi_{n-i+1},\Pi_{n-i+2}, \cdots, \Pi_{n}\rangle$
for $1\leq i\leq n$. From the construction of $\Pi_{i}$ it follows that
$\Hom(\Pi_{j},\Pi_{i})=0$ for any $1\leq i< j\leq n$ and
$\mathcal{D}=\mathcal{T}_{n}=\langle \Pi_{1}, \Pi_{2}, \cdots, \Pi_{n}\rangle$.
Consequently, $(\Pi_i;i\in\Phi_n)$ is a SOD. One can easily check that this gives the inverse map of $\xi$.

Now we prove the second statement. Assume that $(\Pi_i;i\in\Phi_n)$ is an admissible SOD.
Then the induced filtration \eqref{defn of induced chain of right admissible subcategories} is finite s-admissible since $\Pi^{\bot}_{\geq i+1}\cap\Pi_{\geq i}=\Pi_i$ is admissible in $\mathcal{D}$.
Conversely, assume the filtration \eqref{admissble subcats seq} is finite s-admissible. Then the induced  SOD $(\Pi_i;i\in\Phi_n)$ is admissible since $\Pi_{i}=\mathcal{T}^{\bot}_{n-i}\cap\mathcal{T}_{n-i+1}$ is admissible in $\mathcal{D}$.
\end{proof}

Dually, for a given SOD $(\Pi_{i};i\in\Phi_n)$,
each $\Pi_{\leq i}$ is left admissible, and
there exists a left admissible filtration:
\label{perpend dual}
\begin{equation}\label{defn of induced chain of left admissible subcategories}
0\subsetneq\Pi_{\leq 1}\subsetneq\cdots\subsetneq\Pi_{\leq 2}\subsetneq
\Pi_{\leq n}=\mathcal{D}.
\end{equation}
This defines a bijection between SODs and finite left admissible filtrations
in  $\mathcal{D}$. Moreover, these two filtrations
\eqref{defn of induced chain of right admissible subcategories} and
\eqref{defn of induced chain of left admissible subcategories} are
related to each other by taking right and left perpendicular, see for example \cite[Prop. 2.6]{bodb}.

\section{Partial orders on finite t-Stabilities and SODs}

In this section we recall t-stability for  triangulated categories and show a one-to-one correspondence from finite t-stabilities to SODs.
Furthermore, we introduce partial orders on the sets of all finite t-stabilities and SODs, and provide an explicit refinement procedure for them.

\subsection{Finite t-stabilities}

Introduced by Gorodentsev--Kuleshov--Rudakov, a key property of t-stability is that every object of $\mathcal{D}$ admits a Postnikov tower with ordered semistable factors.

\begin{defn} [{\rm \cite[Def. 3.1]{gkr}}] \label{defn of t-stability}
Suppose that $\mathcal{D}$ is a triangulated category, $\Phi$ is a linearly ordered set, and a strictly full extension-closed non-trivial subcategory
$\Pi_\varphi\subset\mathcal{D}$ is given for every $\varphi\in\Phi$. A pair
$(\Phi,\{\Pi_\varphi\}_{\varphi\in\Phi})$ is called a \emph{t-stability}  on $\mathcal{D}$ if
\begin{itemize}
  \item [(1)] the grading shift functor $X\mapsto X[1]$ acts on $\Phi$ as a non-decreasing automorphism, that is, there is a bijection
      $\tau_{\Phi}\in\Aut(\Phi)$ such that $\Pi_{\tau_{\Phi}(\varphi)}=\Pi_{\varphi}[1]$ and $\tau_{\Phi}(\varphi)\geq\varphi$ for
      all $\varphi$;
  \item [(2)] ${\rm{Hom}}^{\leq0}(\Pi _{\varphi'},\Pi_{\varphi''})=0$ for all $\varphi'>\varphi''$ in $\Phi$;
  \item [(3)] every non-zero object $X\in\mathcal{D}$  admits a finite sequence of triangles
\begin{equation}\label{HN-filt2.1}
\xymatrix @C=6.55em@R=10ex@M=4pt@!0{ 0=X_{n}\ar@{->}[r]^-{p_{n}}\ar@{--}[rd] &X_{n-1}\ar@{->}[d]^{q_{n-1}}\ar@{->}[r]^-{p_{n-1}}\ar@{--}[rd] &X_{n-2}
\ar@{->}[d]^-{q_{n-2}}\ar@{->}[r]^-{p_{n-2}}
&\cdots\ar@{->}[r]^-{p_{2}}&X_{1}\ar@{->}[d]^-{q_{1}}
\ar@{->}[r]^-{p_{1}}\ar@{--}[rd]&X_0=X\ar@{->}[d]^{q_{0}}&\\
& A_n&A_{n-1}&&A_{2}&A_{1}\\ }
\end{equation}
with non-zero factors $A_{i}=\mathrm{cone}\; (p_{i})\in\Pi_{\varphi_i}$ and
strictly decreasing $\varphi _{i+1}>\varphi_{i}.$
\end{itemize}
\end{defn}

It has been shown in \cite[Thm. 4.1]{gkr} that for each non-zero object $X$,
the decomposition $\eqref{HN-filt2.1}$ is unique up to isomorphism, which
is known as the \emph{Harder--Narasimhan filtration} (\emph{HN-filtration} for short)of $X$.
The factors $A_i$ are called the \emph{semistable factors} of $X$, and each $\Pi_{\varphi}$ is called a \emph{semistable subcategory}. Furthermore, every semistable subcategory $\Pi_{\varphi}$ is closed under direct summands. Finally, we have $q_i \neq 0$ for $0 \leq i \leq n-1$ and $p_i \cdots p_1 \neq 0$ for $1 \leq i \leq n-1$.

We remark that under condition $(1)$ in Definition \ref{defn of t-stability}, the statement $(2)$ above is equivalent to the following (cf. \cite{rw}):
\begin{itemize}
\item [(2)$'$] ${\rm{Hom}}^{}(\Pi _{\varphi'},\Pi_{\varphi''})=0$ for all $\varphi'>\varphi''$ in $\Phi$;
\end{itemize}
moreover, if $\Phi$ is a finite set, then  $\tau_{\Phi}\in\Aut(\Phi)$ is a order-preserving
bijection which implies $\tau_{\Phi}=\text{id}$.

Let us now call a t-stability $(\Phi,\{\Pi_{\varphi}\}_{\varphi\in\Phi})$ \emph{finite} if $\Phi$ is a finite set.
Furthermore, we refer to such a t-stability restricted to a triangulated subcategory $\mathcal{B} \subset \mathcal{D}$ as a \emph{local finite t-stability} on $\mathcal{B}$.

\begin{rem}
We emphasize that for a t-stability $(\Phi, \{\Pi_\varphi\}_{\varphi \in \Phi})$ with $\Phi$ an infinite set, the semistable subcategories $\Pi_\varphi$ are not necessarily triangulated.

Moreover, we see that a t-stability coincides, in certain cases, with a preordered SOD in the sense of \cite[Def. 3.1]{sst}.
In fact, let $(\Phi, \{\Pi_\varphi\}_{\varphi\in\Phi})$ be a t-stability such that each $\Pi_\varphi$ is admissible. Then $(\mathcal{D}, \Phi)$ is a preordered SOD.
Conversely, let $(\mathcal{D}, \mathcal{P})$ be a preordered SOD with $\mathcal{P}$ linearly ordered. Then $(\mathcal{P}, \{\mathcal{D}_x\}_{x \in \mathcal{P}})$ is a t-stability with every $\mathcal{D}_x$ admissible.
\end{rem}

\subsection{Another description of HN-filtrations and a bijection}

In the following we show a construction of the HN-filtration, a key ingredient of which is the use of triangulated semistable subcategories that can indeed be indexed by an infinite set, compare with \cite[Lem. 3.1]{bond}.

\begin{prop} \label{finite id t-stab}
Assume the pair $(\Phi,\{\Pi_{\varphi}\}_{\varphi\in\Phi})$ satisfies the
conditions $(1)$ and $(2)$ in Definition \ref{defn of t-stability} with $\tau_{\Phi}=\mathrm{id}$.
Then
$(3)$ is equivalent to the following:
\begin{itemize}
\item [(3)$'$] $\mathcal{D}=\langle\Pi_{\varphi}\mid\varphi\in\Phi\rangle.$
\end{itemize}
\end{prop}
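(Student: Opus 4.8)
The statement is an equivalence $(3)\Leftrightarrow(3)'$ under the hypotheses $(1)$, $(2)$, and $\tau_\Phi=\mathrm{id}$. The implication $(3)\Rightarrow(3)'$ is immediate: if every nonzero $X$ has an HN-filtration with factors $A_i\in\Pi_{\varphi_i}$, then reading the Postnikov tower \eqref{HN-filt2.1} from right to left expresses $X$ as an iterated extension of objects in the $\Pi_\varphi$, so $X\in\langle\Pi_\varphi\mid\varphi\in\Phi\rangle$; since this holds for all $X$, we get $\mathcal{D}=\langle\Pi_\varphi\mid\varphi\in\Phi\rangle$. The real content is $(3)'\Rightarrow(3)$, i.e.\ producing the HN-filtration and checking the strict decrease of phases and nonvanishing of the factors.

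\smallskip
\noindent\textbf{Construction of the filtration.} Assuming $(3)'$, I would first reduce to the finite case: since $\Phi$ carries $\tau_\Phi=\mathrm{id}$, the shift acts trivially on indices, but $\Phi$ may still be infinite; however, any fixed $X$ lies in $\langle\Pi_\varphi\mid\varphi\in\Phi\rangle$, hence (by the definition of the generated subcategory as the union over finite iterated extensions/summands/shifts) in $\langle\Pi_{\varphi_1},\dots,\Pi_{\varphi_m}\rangle$ for some finite subset $\{\varphi_1<\dots<\varphi_m\}\subset\Phi$. Set $\mathcal{T}_j:=\langle\Pi_{\varphi_1},\dots,\Pi_{\varphi_j}\rangle$; then condition $(2)'$ (the Hom-vanishing $\mathrm{Hom}(\Pi_{\varphi'},\Pi_{\varphi''})=0$ for $\varphi'>\varphi''$) guarantees $\mathrm{Hom}(\mathcal{T}_{j},\langle\Pi_{\varphi_{j+1}},\dots\rangle[k])=0$ for all $k$, so each $\mathcal{T}_j$ is right admissible in $\mathcal{T}_{j+1}$ and indeed $(\mathcal{T}_j^\bot\cap\mathcal{T}_{j+1},\mathcal{T}_j)$ is an SOD of $\mathcal{T}_{j+1}$ with $\mathcal{T}_j^\bot\cap\mathcal{T}_{j+1}=\Pi_{\varphi_{j+1}}$ (using that $\Pi_{\varphi_{j+1}}$ is extension-closed and orthogonality pins it down exactly, as in \cite[Lem. 3.1]{bond} and Proposition \ref{finit t-stab and finite right admiss}). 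Decomposing $X$ successively along this chain of SODs yields a Postnikov tower with factors in the $\Pi_{\varphi_j}$, listed with strictly decreasing phase.

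\smallskip
\noindent\textbf{Cleaning up and the main obstacle.} After the decomposition some factors $A_i$ may be zero (if $X$ has no component in a given $\Pi_{\varphi_j}$); these are simply deleted by composing the two adjacent maps in the tower, which preserves exactness of the triangles and strictly decreases the index set further. This delivers a filtration of the exact shape \eqref{HN-filt2.1} with nonzero factors and strictly decreasing phases—which is precisely condition $(3)$. The main obstacle I anticipate is the careful bookkeeping in the inductive SOD-decomposition step: one must verify that the orthogonality from $(2)'$ genuinely forces $\mathcal{T}_j^\bot\cap\mathcal{T}_{j+1}=\Pi_{\varphi_{j+1}}$ (rather than something larger), which relies on $\Pi_{\varphi_{j+1}}$ being strictly full and extension-closed and on the fact that no nonzero object of $\mathcal{T}_{j+1}$ can be orthogonal to $\mathcal{T}_j$ unless it already lies in $\Pi_{\varphi_{j+1}}$; and one must confirm that the uniqueness part of \cite[Thm. 4.1]{gkr} is not needed here—only existence. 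A secondary subtlety is ensuring that the hypothesis $\tau_\Phi=\mathrm{id}$ is used exactly where required, namely to know that shifts of semistable objects remain in the \emph{same} $\Pi_\varphi$, so that the shift-compatibility clause $(1)$ of Definition \ref{defn of t-stability} is automatically satisfied by the constructed data and poses no additional constraint.
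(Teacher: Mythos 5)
Your proposal has a genuine gap, in fact two. First, the semiorthogonality is used in the wrong direction. With $\varphi_1<\dots<\varphi_m$ and $\mathcal{T}_j=\langle\Pi_{\varphi_1},\dots,\Pi_{\varphi_j}\rangle$, condition $(2)'$ gives $\Hom(\Pi_{\varphi_{j+1}},\mathcal{T}_j)=0$, i.e.\ $\Pi_{\varphi_{j+1}}\subseteq{}^{\bot}\mathcal{T}_j$; your claimed vanishing $\Hom(\mathcal{T}_j,\langle\Pi_{\varphi_{j+1}},\dots\rangle[k])=0$ and the identification $\mathcal{T}_j^{\bot}\cap\mathcal{T}_{j+1}=\Pi_{\varphi_{j+1}}$ are false in general (already for $A_2$ with $\Pi_1=\langle S_1\rangle$, $\Pi_2=\langle S_2\rangle$ one has $\Hom^{\bullet}(S_1,S_2)\neq0$). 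With the correct orientation, $\mathcal{T}_j$ is the \emph{left} piece of the semi-orthogonal pair $(\mathcal{T}_j,\Pi_{\varphi_{j+1}})$, and the decomposition triangle puts the high-phase part as the subobject and the low-phase part as the quotient; carried out literally, your construction would produce towers with phases in \emph{increasing} order along the filtration, which is the opposite of what $(3)$ requires.

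Second, and more fundamentally, even after fixing the orientation the key step is unsupported: you assert that generation plus Hom-vanishing makes $\mathcal{T}_j$ (right) admissible in $\mathcal{T}_{j+1}$, citing \cite[Lem.~3.1]{bond} and Proposition~\ref{finit t-stab and finite right admiss}. But those results go the other way: they \emph{presuppose} admissibility (existence of an adjoint, equivalently of decomposition triangles) and deduce the decomposition. The statement ``$\mathcal{T}_{j+1}=\langle\mathcal{T}_j,\Pi_{\varphi_{j+1}}\rangle$ with the Hom-vanishing implies every object of $\mathcal{T}_{j+1}$ sits in a triangle with pieces in $\Pi_{\varphi_{j+1}}$ and $\mathcal{T}_j$'' is essentially the $n=2$ case of the proposition you are proving, so invoking it is circular; to make your route work you would have to prove a two-term d\'evissage lemma from scratch (showing the class of decomposable objects is closed under extensions, shifts, and --- since $\langle\,\cdot\,\rangle$ is summand-closed --- direct summands), which is where the real work lies. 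The paper's proof avoids this entirely: it starts from an arbitrary finite tower witnessing $X\in\langle\Pi_\varphi\mid\varphi\in\Phi\rangle$ and rearranges it by the octahedral axiom, merging two adjacent factors of equal phase into one (Case~1) and swapping adjacent out-of-order factors (Case~2), the swap being possible because $\tau_\Phi=\mathrm{id}$ gives $\Hom^{\bullet}(A_i,A_{i+1})=0$, so the intermediate extension splits; iterating yields strictly decreasing phases. Your direction $(3)\Rightarrow(3)'$ and the remark that only existence, not uniqueness, of HN-filtrations is needed are fine.
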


\begin{proof}
It suffices to show that (3)$'$ implies (3) since the converse is straightforward.
For this we assume  $0\neq X\in\mathcal{D}=\langle\Pi_{\varphi}\mid\varphi\in\Phi\rangle$.
Then $X$ admits a finite sequence of triangles
\begin{equation}\label{seq of reorder triangles}
\xymatrix @C=6.55em@R=10ex@M=4pt@!0{ 0=X_{n}\ar@{->}[r]^-{p_{n}}\ar@{--}[rd] &X_{n-1}\ar@{->}[d]^{q_{n-1}}\ar@{->}[r]^-{p_{n-1}}\ar@{--}[rd] &X_{n-2}
\ar@{->}[d]^-{q_{n-2}}\ar@{->}[r]^-{p_{n-2}}
&\cdots\ar@{->}[r]^-{p_{2}}&X_{1}\ar@{->}[d]^-{q_{1}}
\ar@{->}[r]^-{p_{1}}\ar@{--}[rd]&X_0=X\ar@{->}[d]^{q_{0}}&\\
&A_{n}&A_{n-1}&&A_{2}&A_{1}\\ }
\end{equation}
with non-zero factors $A_{i}=\mathrm{cone}\; (p_{i})\in\Pi_{\varphi_i}$.

Consider the following cases:

{\bf Case 1}: Assume $\varphi _{i+1}=\varphi_{i}$ for some $i$.
By \eqref{seq of reorder triangles} we have a triangle
$\xi_{i+1}: X_{i+1}\xrightarrow{p_{i+1}} X_{i}\xrightarrow {q_{i}} A_{i+1}\rightarrow X_{i+1}[1]$. Taking the octahedral axiom along  $p_{i}$,
we obtain the following commutative diagram
$$\xymatrix{
   & & A_{i}[-1] \ar[d]_{}\ar@{=}[r]^{}   & A_{i}[-1]\ar[d]_{} &  & & \\
     & X_{i+1}\ar@{=}[d]_{}  \ar[r]^{p_{i+1}} &  X_{i} \ar[d]^-{p_{i}} \ar[r]^{} & A_{i+1} \ar[d]^-{}\ar[r]^{} & X_{i+1}[1]\ar@{=}[d]_{}  & &   \\
   &X_{i+1}  \ar[r]^-{p_{i}p_{i+1}} & X_{i-1}  \ar[r]^-{} \ar[d]^-{}& B_{i+1}\ar[d]^{} \ar[r]^{} & X_{i+1}[1].  & &  \\
   &  & A_{i} \ar@{=}[r]^{} &A_{i}  &   & &  \\
 }$$
Then we have the following triangle:
$$\xi'_{i+1,i-1}: X_{i+1}\rightarrow X_{i-1}\rightarrow B_{i+1}\rightarrow X_{i+1}[1],$$
with  $B_{i+1}\in\langle A_{i+1}, A_{i}\rangle\subseteq\Pi_{\varphi_{i}}=\Pi_{\varphi_{i+1}}$.
Consequently, the sequence of triangles for $X$ has the form:
\begin{equation*}
\xymatrix @C=6.05em@R=10ex@M=4pt@!0{
0=X_{n}\ar@{->}[r]^-{p_{n}}\ar@{--}[rd] &
X_{n-1}\ar@{->}[d]^{q_{n-1}}\ar@{->}[r]^-{p_{n-1}}&
\cdots\ar@{->}[r]&
X_{i+1}\ar@{->}[r]^-{p_{i}p_{i+1}}\ar@{--}[rd]&
X_{i-1}\ar@{->}[d]^-{}\ar@{->}[r]^-{}&
\cdots\ar@{->}[r]^-{p_{2}}&X_{1}\ar@{->}[d]^-{q_{1}}
\ar@{->}[r]^-{p_{1}}\ar@{--}[rd]&X_0=X\ar@{->}[d]^{q_{0}}&\\
&A_{n}&&&B_{i+1}&&A_{2}&A_{1}\\ }
\end{equation*}
Therefore, we can assume $\varphi _{i+1}\neq\varphi_{i}$ for any $i$.

{\bf Case 2}:
Assume $\varphi _{i+1}<\varphi_{i}$ for some $i$.
Note that $\tau_{\Phi}=\text{id}$ and hence $\Pi_{\tau_{\Phi}(\varphi)}=\Pi_{\varphi}=\Pi_{\varphi}[1]$ for any $\varphi\in\Phi$.
It follows that $\Hom^{\bullet}(A_{i},A_{i+1})\subset
\Hom(\Pi_{\varphi_{i}},\Pi_{\varphi_{i+1}})=0$,
and an argument analogous to Case 1 yields  the triangle $X_{i+1}\xrightarrow{p_{i}p_{i+1}} X_{i-1}\xrightarrow {} A_{i+1}\oplus A_{i}\rightarrow X_{i+1}[1]$.
Applying the octahedral axiom along embedding $A_{i}\hookrightarrow A_{i+1}\oplus A_{i}$,
we obtain the following commutative diagram
$$\xymatrix{
   & & A_{i+1}[-1] \ar[d]_{}\ar@{=}[r]^{}   & A_{i+1}[-1]\ar[d]^{} &  & & \\
     & X_{i+1}\ar@{=}[d]_{}  \ar[r]^{p'_{i+1}} & Y_{i} \ar[d]^-{p'_{i}} \ar[r]^{q'_{i}} & A_{i} \ar[d]^-{}\ar[r]^{} & X_{i+1}[1]\ar@{=}[d]_{}  & &   \\
   &X_{i+1}  \ar[r]^-{} & X_{i-1}  \ar[r]^-{} \ar[d]^-{q'_{i-1}}& A_{i+1}\oplus A_{i}\ar[d]^{} \ar[r]^{} & X_{i+1}[1].  & &  \\
   &  & A_{i+1} \ar@{=}[r]^{} &A_{i+1}  &   & &  \\
 }$$
This gives the following two triangles:
$$\xi'_{i+1}: X_{i+1}\xrightarrow{p'_{i+1}} Y_{i}\xrightarrow{q'_{i}} A_{i}\rightarrow X_{i+1}[1] \quad\text{and}\quad \xi'_{i}: Y_{i}\xrightarrow{p'_{i}} X_{i-1}\xrightarrow{q'_{i-1}} A_{i+1}\rightarrow Y_{i+1}[1], $$
which fit together as follows:
\begin{equation*}
\xymatrix@C=8.75em@R=10ex@M=4pt@!0 { X_{i+1}\ar@{->}[r]^-{p'_{i+1}}\ar@{--}[rd] &Y_{i}\ar@{->}[d]^-{q'_{i}}\ar@{->}[r]^-{p'_{i}}\ar@{--}[rd]
&X_{i-1}.\ar@{->}[d]^-{q'_{i-1}}\\
&A_{i}&A_{i+1} }
\end{equation*}

Keeping the procedure going on step by step, we finally obtain that  $X$ admits
a filtration with cones, namely $(B_{m}, B_{m-1}, \cdots, B_{1})$, having strictly decreasing order in $\Phi$.
Hence, the sequence of triangles for $X$ in fact fits into a HN-filtration.
This completes the proof.
\end{proof}

\begin{defn} \label{equiv of finite t-stab}
Two t-stabilities $(\Phi,\{\Pi_{\varphi}\}_{\varphi\in\Phi}),(\Psi,\{P_{\psi}\}_{\psi\in\Psi})$ on triangulated category $\mathcal{D}$ are called \emph{equivalent}  if there exists an order-preserved bijective map $r:\Phi\rightarrow\Psi$ such that  $P_{r(\varphi)}=\Pi_\varphi$ for any $\varphi\in\Phi$.
\end{defn}

Thus, up to equivalence, any finite t-stability can be presented by $(\Phi_n,\{\Pi_{i}\}_{i\in\Phi_n})$ for some $n\in\mathbb{N}$.

For any interval $I\subseteq \Phi_n$,
we define $\Pi_{I}:=\langle\Pi_{i}\mid i\in I\rangle$ to be the triangulated subcategory of $\mathcal{D}$ generated by $\Pi_{i}$ for all $i\in I$.
Then, given a finite t-stability $(\Phi_n,\{\Pi_{i}\}_{i\in\Phi_n})$ on $\mathcal{D}$, one can note that
non-zero objects in $\Pi_{I}$ are exactly those objects  $X\in\mathcal{D}$ which satisfy $\bm \phi^{\pm}(X)\in I$.

The following result shows a precise correspondence between finite t-stabilities and SODs.

\begin{prop}\label{t-stab and sod}
Let $\mathcal{D}$ be a triangulated category. There is a bijection
\begin{eqnarray*}
\eta: \{\mbox{equvalence classes of finite t-stabilities on $\mathcal{D}$}\}&\longrightarrow&
\{\mbox{SODs of $\mathcal{D}$}\} \\
(\Phi_n,\{\Pi_i\}_{i\in\Phi_n}) &\longmapsto& (\Pi_i;i\in\Phi_n).
\end{eqnarray*}
\end{prop}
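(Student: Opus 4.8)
The plan is to verify the two directions of the claimed bijection separately. First I would check that $\eta$ is well-defined, that is, that a finite t-stability $(\Phi_n,\{\Pi_i\}_{i\in\Phi_n})$ really does produce a SOD. Since $\Phi_n$ is finite, the remark following Definition~\ref{defn of t-stability} forces $\tau_{\Phi_n}=\mathrm{id}$, so condition (2) is equivalent to condition (2)$'$, namely $\mathrm{Hom}(\Pi_j,\Pi_i)=0$ for all $j>i$; this is exactly axiom (1) of Definition~\ref{defn of SOD}. Axiom (2) of Definition~\ref{defn of SOD}, $\mathcal{D}=\langle\Pi_i\mid i\in\Phi_n\rangle$, is precisely condition (3)$'$ of Proposition~\ref{finite id t-stab}, which (given that (1) and (2) hold with $\tau_{\Phi_n}=\mathrm{id}$) is equivalent to the HN-filtration axiom (3). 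Hence $(\Pi_i;i\in\Phi_n)$ is a SOD, and $\eta$ lands in the right place. I also need to note that equivalent finite t-stabilities (Definition~\ref{equiv of finite t-stab}) give the same SOD up to the obvious relabeling, so $\eta$ is well-defined on equivalence classes; since we already normalized representatives to be indexed by $\Phi_n$, an order-preserving bijection $\Phi_n\to\Phi_n$ is the identity, so this is immediate.

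Next I would construct the inverse. Given a SOD $(\Pi_i;i\in\Phi_n)$, the same subcategories $\{\Pi_i\}$ indexed by $\Phi_n$ form a pair $(\Phi_n,\{\Pi_i\}_{i\in\Phi_n})$; I need to check it satisfies (1), (2), (3). The components of a SOD are full triangulated — in particular strictly full, extension-closed, and non-trivial (recall the convention in the excerpt that trivial SODs are excluded and all $\Pi_i$ are considered non-trivial). Condition (1) holds with $\tau_{\Phi_n}=\mathrm{id}$: since each $\Pi_i$ is triangulated, $\Pi_i[1]=\Pi_i$, so the shift acts trivially and the automorphism is the identity, which is order-preserving and satisfies $\tau(\varphi)\ge\varphi$. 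Condition (2) is the semi-orthogonality $\mathrm{Hom}(\Pi_j,\Pi_i)=0$ for $j>i$, which by the remark equals (2) since each $\Pi_i$ is shift-stable. Condition (3) follows from Proposition~\ref{finite id t-stab} again, using that $\mathcal{D}=\langle\Pi_i\mid i\in\Phi_n\rangle$ by axiom (2) of the SOD. So $(\Phi_n,\{\Pi_i\}_{i\in\Phi_n})$ is a finite t-stability, and its equivalence class depends only on the SOD.

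Finally I would observe that the two assignments are mutually inverse essentially by construction: starting from a finite t-stability, passing to its SOD and back returns the same indexed family $\{\Pi_i\}_{i\in\Phi_n}$, hence the same equivalence class; starting from a SOD, passing to its t-stability and back returns the same sequence $(\Pi_i;i\in\Phi_n)$. This gives bijectivity of $\eta$.

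The main obstacle, and the only genuinely non-formal point, is the equivalence (3)$\Leftrightarrow$(3)$'$ in the presence of $\tau_\Phi=\mathrm{id}$ — but that is precisely the content of Proposition~\ref{finite id t-stab}, which I am entitled to invoke; its proof is where the real work (reordering the crude filtration via the octahedral axiom) happens. Everything else is bookkeeping: checking that triangulated subcategories are shift-stable so that $\tau_{\Phi_n}=\mathrm{id}$ automatically, and matching the two lists of axioms. I would therefore keep the write-up short, citing Proposition~\ref{finite id t-stab} and the remark after Definition~\ref{defn of t-stability} for the two equivalences and spelling out only the dictionary between the SOD axioms and the t-stability axioms.
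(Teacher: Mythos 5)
Your proposal is correct and follows essentially the same route as the paper: both directions reduce to Proposition~\ref{finite id t-stab} (with the finiteness of $\Phi_n$ forcing $\tau_{\Phi_n}=\mathrm{id}$), and the inverse is the tautological assignment on the same family $\{\Pi_i\}$. The only micro-point you leave implicit in the forward direction is that $\Pi_i=\Pi_i[1]$ together with extension-closure makes each $\Pi_i$ a full \emph{triangulated} subcategory, as Definition~\ref{defn of SOD} requires — the paper states this in one sentence, and it follows from exactly the shift-stability observation you invoke in the reverse direction.
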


\begin{proof}
Assume that $(\Phi_n,\{\Pi_i\}_{i\in\Phi_n})$ is a finite t-stability on $\mathcal{D}$.
Since $\tau_{\Phi}=\text{id}$ and  $\Pi_i=\Pi_{\tau_{\Phi}(i)}=\Pi_i[1]$,
each $\Pi_i$ is a full triangulated subcategory.
A direct verification shows that $(\Pi_i;i\in\Phi_n)$ is a SOD of $\mathcal{D}$.
This shows that $\eta$ is well-defined.

Conversely, assume that $(\Pi_i;i\in\Phi_n)$ is a SOD of $\mathcal{D}$.
Set $\tau_{\Phi}=\text{id}\in\Aut(\Phi_n)$.
By Proposition \ref{finite id t-stab}, $(\Phi_n,\{\Pi_i\}_{i\in\Phi_n})$ is a finite t-stability on $\mathcal{D}$.
This defines the inverse of $\eta$,
and it follows that $\eta$ is a bijection.
\end{proof}

\subsection{Partial orders and local refinement construction}
Following \cite{gkr}, we introduce partial orders on the sets of all finite t-stabilities and SODs on $\mathcal{D}$ as follows.

\begin{defn}\label{finer or coarser}
Let $(\Phi_n,\{\Pi_{i}\}_{i\in\Phi_n}),(\Psi_m,\{P_{\psi}\}_{\psi\in\Psi_m})$ be
finite t-stabilities on a triangulated category $\mathcal{D}$. We say that a finite t-stability
$(\Phi_n,\{\Pi_{i}\}_{i\in\Phi_n})$
is \emph{finer} than $(\Psi_m,\{P_{\psi}\}_{\psi\in\Psi_m})$, or  $(\Psi_m,\{P_{\psi}\}_{\psi\in\Psi_m})$ is \emph{coarser} $(\Phi_n,\{\Pi_{i}\}_{i\in\Phi_n})$ and write  $(\Phi_n,\{\Pi_{i}\}_{i\in\Phi_n})\preceq
(\Psi_m,\{P_{\psi}\}_{\psi\in\Psi_m})$, if there exists a surjective map $r:\Phi_n\rightarrow\Psi_m$ such that
\begin{itemize}
 \item [(1)] $r\tau_{\Phi}=\tau_{\Psi}r$;
 \item [(2)] $i'>i''$ implies $ r(i')\geq r(i'')$;
 \item [(3)] for any $\psi\in\Psi_m$, $P_{\psi}=\langle \Pi_{i}\mid i\in r^{-1}(\psi) \rangle$.
\end{itemize}

We say that the SOD $(\Pi_i;i\in\Phi_n)$ is \emph{finer} than $(P_{\psi};\psi\in\Psi_m)$ if the corresponding finite t-stability
$(\Phi_n,\{\Pi_{i}\}_{i\in\Phi_n})$
is finer than $(\Psi_m,\{P_{\psi}\}_{\psi\in\Psi_m})$.
\end{defn}
Minimal elements with respect to these partial orderings will be called the \emph{finite finest} t-stabilities, and \emph{finest} SODs, respectively.

In the rest of this section, results for finite t-stabilities extend directly to SODs.
For a finite t-stability, we introduce a local-refinement method as follows, compare with the main result of \cite{sst}.

\begin{prop}\label{local finite refinement construction}
Let $(\Phi_n,\{\Pi_{i}\}_{i\in\Phi_n})$ be a finite t-stability on a triangulated
category $\mathcal{D}$. For any $i\in\Phi_n$, assume $(I_{i},\{P_{{\psi}}\}_{\psi \in I_{i}})$
is a local finite t-stability  on $\Pi_{i}$. Let $\Psi= \bigcup_{i\in\Phi_n } I_{i}$,
which is a linearly ordered set containing each $I_{i}$ as a linearly ordered
subset, and $\psi_{i_2}>\psi_{i_1}$ whenever $\psi_{i_1}\in I_{i_1},\psi_{i_2}\in I_{i_2}$
with $i_2>i_1$.
Then
$(\Psi,\{P_{\psi}\}_{\psi \in \Psi})$ with $\tau_{\Psi}=\mathrm{id}$ is a finite t-stability on $\mathcal{D}$, which is finer than $(\Phi_n,\{\Pi_{i}\}_{i\in\Phi_n})$.
\end{prop}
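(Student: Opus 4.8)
The plan is to verify that $(\Psi,\{P_\psi\}_{\psi\in\Psi})$ satisfies conditions $(1)$, $(2)'$ and $(3)'$ from Definition \ref{defn of t-stability} and Proposition \ref{finite id t-stab}, and then exhibit the surjection $r\colon\Psi\to\Phi_n$ witnessing the refinement. Condition $(1)$ is immediate: $\Psi$ is finite, so $\tau_\Psi=\mathrm{id}$ is forced, and each $P_\psi$ is a strictly full extension-closed non-trivial subcategory by hypothesis (since each local t-stability on $\Pi_i$ consists of such subcategories of $\Pi_i\subseteq\mathcal{D}$). For $(3)'$, note that $\langle P_\psi\mid\psi\in\Psi\rangle=\langle\,\langle P_\psi\mid\psi\in I_i\rangle\mid i\in\Phi_n\rangle$; but for each fixed $i$, the local t-stability $(I_i,\{P_\psi\}_{\psi\in I_i})$ on $\Pi_i$ gives $\langle P_\psi\mid\psi\in I_i\rangle=\Pi_i$ by Proposition \ref{finite id t-stab} applied inside $\Pi_i$ (using $\tau=\mathrm{id}$ there, which is automatic since $I_i$ is finite), hence $\langle P_\psi\mid\psi\in\Psi\rangle=\langle\Pi_i\mid i\in\Phi_n\rangle=\mathcal{D}$, the last equality because $(\Phi_n,\{\Pi_i\})$ is a finite t-stability.

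The main point is condition $(2)'$: $\Hom(P_{\psi'},P_{\psi''})=0$ whenever $\psi'>\psi''$ in $\Psi$. Here I would split into two cases according to how $\psi',\psi''$ sit relative to the blocks $I_i$. If $\psi',\psi''\in I_i$ for the same $i$, then $\psi'>\psi''$ in $I_i$ and the local t-stability on $\Pi_i$ (again via the equivalence $(2)\Leftrightarrow(2)'$ granted by $\tau=\mathrm{id}$) gives $\Hom(P_{\psi'},P_{\psi''})=0$ directly. If $\psi'\in I_{i'}$ and $\psi''\in I_{i''}$ with $i'\neq i''$, then the ordering rule on $\Psi$ forces $i'>i''$; since $P_{\psi'}\subseteq\Pi_{i'}$ and $P_{\psi''}\subseteq\Pi_{i''}$, we get $\Hom(P_{\psi'},P_{\psi''})\subseteq\Hom(\Pi_{i'},\Pi_{i''})=0$ by condition $(2)'$ for the original finite t-stability $(\Phi_n,\{\Pi_i\})$. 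This covers all cases, so $(\Psi,\{P_\psi\}_{\psi\in\Psi})$ is a finite t-stability by Proposition \ref{finite id t-stab}.

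Finally, to see it is finer than $(\Phi_n,\{\Pi_i\}_{i\in\Phi_n})$, define $r\colon\Psi\to\Phi_n$ by $r(\psi)=i$ for $\psi\in I_i$; this is well-defined because the $I_i$ partition $\Psi$, and surjective because each $I_i$ is non-empty (a local t-stability on the non-trivial subcategory $\Pi_i$ cannot have empty index set). Condition $(1)$ of Definition \ref{finer or coarser} reads $r\,\mathrm{id}=\mathrm{id}\,r$, which is trivial; condition $(2)$ follows from the ordering rule on $\Psi$ (if $\psi'>\psi''$ then either they lie in the same block, giving $r(\psi')=r(\psi'')$, or in blocks $I_{i'},I_{i''}$ with $i'>i''$, giving $r(\psi')>r(\psi'')$); and condition $(3)$ is exactly $\Pi_i=\langle P_\psi\mid\psi\in I_i\rangle=\langle P_\psi\mid\psi\in r^{-1}(i)\rangle$, established above. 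I expect no genuine obstacle here: the only thing to be careful about is invoking Proposition \ref{finite id t-stab} at the level of each subcategory $\Pi_i$ (to turn condition $(3)'$ for the local t-stability into the generation statement $\langle P_\psi\mid\psi\in I_i\rangle=\Pi_i$ and vice versa), and consistently using the equivalence of $(2)$ and $(2)'$ when $\tau=\mathrm{id}$.
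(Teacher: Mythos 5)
Your proposal is correct and follows essentially the same route as the paper: verify conditions (1) and (2)$'$ directly (the paper states the Hom-vanishing in one line where you spell out the two cases), reduce condition (3) to the generation statement (3)$'$ via Proposition \ref{finite id t-stab}, and then exhibit the surjection $r(\psi)=i$ for $\psi\in I_i$ and check the three conditions of Definition \ref{finer or coarser}. The only cosmetic difference is that you invoke Proposition \ref{finite id t-stab} inside each $\Pi_i$ to obtain $\Pi_i=\langle P_\psi\mid\psi\in I_i\rangle$, which the paper simply asserts from the construction; this is harmless and the arguments coincide.
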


\begin{proof}
It is obvious that
$\tau_{\Psi}$ satisfies Definition
\ref{defn of t-stability} (1).
By definition, for any $\psi'>\psi'' \in \Psi$, we have $\Hom(P_{\psi'}, P_{\psi''})=0$.
It follows from the construction that $\Pi_{i}=\langle P_{\psi}\mid\psi\in I_{i}\rangle$ for each $i\in\Phi_n$ and thus
\begin{equation*}
\begin{split}
    \mathcal{D}&=\langle \Pi_{i}\mid i\in\Phi_n\rangle
     =\langle P_{\psi}\mid\psi\in I_{i},i\in\Phi_n\rangle
     = \langle P_{\psi}\mid\psi\in\Psi\rangle.
\end{split}
\end{equation*}
Hence, by Proposition \ref{finite id t-stab}, $(\Psi,\{P_{\psi}\}_{\psi\in\Psi})$ is a finite t-stability on $\mathcal{D}$.

Note that $\Psi= \bigcup_{i\in\Phi_n} I_{i}$. This induces a well-defined surjective map $r: \Psi\to \Phi_n$ by setting $r(\psi)=i$ for all $ \psi \in I_{i}$.
Since $\tau_{\Psi}=\text{id}$, we have $r\tau_{\Psi}(\psi)=r(\psi)=\tau_{\Phi}r(\psi)$.
In order to show that $(\Psi,\{P_{\psi}\}_{\psi \in \Psi})$ is finer than $(\Phi_n,\{\Pi_{i}\}_{i\in\Phi_n})$,
it remains to show that the statements (2) and (3) in
Definition \ref{finer or coarser} hold.
In fact,
if $\psi_{i_2}>\psi_{i_1} \in\Psi$, we claim $r(\psi_{i_2})\geq r(\psi_{i_1})$.
Otherwise, write $r(\psi_{i_1})={i_1}$ and $r(\psi_{i_2})={i_2}$ , that is, $\psi_{i_1} \in I_{i_1}$ $\psi_{i_2} \in I_{i_2}$.
It follows that $\psi_{i_2}<\psi_{i_1}$ by definition of ordering in $\Psi$, a contradiction. This proves the claim.
On the other hand, for any $i\in\Phi_n$,
$\Pi_{i}=\langle P_{{\psi}}\mid \psi \in I_{i}\rangle=\langle P_{{\psi}}\mid \psi \in r^{-1} (i)\rangle$.
This concludes the proof.
\end{proof}

The finite t-stability $(\Psi,\{P_{\psi}\}_{\psi \in \Psi})$ obtained as above will be called a \emph{local refinement} of $(\Phi_n,\{\Pi_{i}\}_{i\in\Phi_n})$.

\subsection{Finite finest t-stabilities}

The following proposition gives a direct sufficient condition for a finite t-stability to be finite finest on a triangulated category.

\begin{prop}\label{necess cond for finest SOD}
Let $\mathcal{D}$ be any triangulated category, and $(\Phi_n,\{\Pi_{i}\}_{i\in\Phi_n})$ a finite t-stability on $\mathcal{D}$.
Suppose that the following condition holds for every $i\in\Phi_n$:
$$\Hom(\langle X\rangle,\langle Y\rangle)\neq 0\neq \Hom(\langle Y\rangle,\langle X\rangle),\quad \forall\;\, 0\neq X,Y\in\Pi_{i}.$$
Then $(\Phi_n,\{\Pi_{i}\}_{i\in\Phi_n})$ is finite finest.
\end{prop}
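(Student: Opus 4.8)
The plan is to argue by contradiction: suppose $(\Phi_n,\{\Pi_i\}_{i\in\Phi_n})$ is not finite finest. Then there is a strictly finer finite t-stability $(\Psi_m,\{P_\psi\}_{\psi\in\Psi_m})$ together with a surjective, order-compatible map $r:\Psi_m\to\Phi_n$ satisfying the three conditions of Definition \ref{finer or coarser}, and since the refinement is strict we have $m>n$, so there is some index $i_0\in\Phi_n$ whose fibre $r^{-1}(i_0)$ contains at least two consecutive elements of $\Psi_m$. Write these as $\psi_1<\psi_2$ in $r^{-1}(i_0)$. The first step is to record, from condition (3) of Definition \ref{finer or coarser}, that $\Pi_{i_0}=\langle P_\psi\mid \psi\in r^{-1}(i_0)\rangle$, so that the local data $\{P_\psi\}_{\psi\in r^{-1}(i_0)}$ restricts to a genuine local finite t-stability on $\Pi_{i_0}$ with at least two distinct semistable subcategories.

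The key observation is then that the t-stability axiom (2)$'$ applied inside $\mathcal{D}$ (equivalently, the Hom-vanishing for the finer t-stability) forces $\Hom(P_{\psi_2},P_{\psi_1})=0$, since $\psi_2>\psi_1$ in $\Psi_m$. Now pick any nonzero objects $X\in P_{\psi_2}$ and $Y\in P_{\psi_1}$; such objects exist because every semistable subcategory in a t-stability is required to be non-trivial (Definition \ref{defn of t-stability}). Both $X$ and $Y$ lie in $\Pi_{i_0}$, so the hypothesis of the proposition applies and gives $\Hom(\langle X\rangle,\langle Y\rangle)\neq 0$. But $\langle X\rangle\subseteq\langle P_{\psi_2}\rangle=P_{\psi_2}$ (the latter because, as noted in the proof of Proposition \ref{t-stab and sod}, each semistable subcategory of a \emph{finite} t-stability is already triangulated and closed under shifts and summands, so $\langle P_{\psi_2}\rangle=P_{\psi_2}$), and likewise $\langle Y\rangle\subseteq P_{\psi_1}$. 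Hence $\Hom(\langle X\rangle,\langle Y\rangle)\subseteq\Hom(P_{\psi_2},P_{\psi_1})=0$, a contradiction. Therefore no strict refinement exists and $(\Phi_n,\{\Pi_i\}_{i\in\Phi_n})$ is finite finest.

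I expect the only subtle point — and the step worth spelling out carefully — to be the reduction showing that the fibre $r^{-1}(i_0)$ really does carry two distinct semistable subcategories: one must make sure that the surjectivity plus strictness of the refinement genuinely produces a fibre of size $\geq 2$, and that distinct indices $\psi_1\neq\psi_2$ give genuinely usable objects via the non-triviality of the $P_\psi$. Everything after that is a one-line Hom-vanishing comparison, using that for finite t-stabilities the semistable subcategories coincide with their triangulated closures. A minor bookkeeping remark: one should note that condition (2)$'$ of the t-stability is exactly what bridges "$\psi_2>\psi_1$" to "$\Hom(P_{\psi_2},P_{\psi_1})=0$", and that this is legitimate since $\tau_{\Psi_m}=\mathrm{id}$ for finite $\Psi_m$, as observed right after Definition \ref{defn of t-stability}.
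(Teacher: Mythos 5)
Your proof is correct and takes essentially the same route as the paper's: argue by contradiction, find a fibre $r^{-1}(i_0)$ of the refinement map containing two indices $\psi_1<\psi_2$, choose nonzero objects of $P_{\psi_1},P_{\psi_2}\subseteq\Pi_{i_0}$, and contradict the hypothesis via $\Hom(P_{\psi_2},P_{\psi_1})=0$. Your additional remarks (strictness forcing $m>n$, and the thickness of semistable subcategories of a finite t-stability so that $\langle X\rangle\subseteq P_{\psi_2}$) merely make explicit steps the paper leaves implicit.
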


\begin{proof}
Assume $(\Phi_n,\{\Pi_{i}\}_{i\in\Phi_n})$  is not finite finest. Then
there exists a finite t-stability $(\Psi_m,\{P_{\psi}\}_{\psi\in\Psi_m})$
which is finer than $(\Phi_n,\{\Pi_{i}\}_{i\in\Phi_n})$. Hence, there is a
surjective map $r: \Psi_m \rightarrow \Phi_n$, which is not a bijection.

So there exists $i\in\Phi_n$ and $\psi_2> \psi_1\in\Psi_m$ such that
$r(\psi_1) = r(\psi_2)=i$. Then $P_{\psi_1}, P_{\psi_2} \subseteq \Pi_{i}$.
Moreover, there exist non-zero objects $X,Y\in\Pi_{i}$ such that $X\in P_{\psi_1}, Y\in P_{\psi_2}$.
But $\Hom(\langle Y\rangle, \langle X\rangle)\subset\Hom(P_{\psi_2}, P_{\psi_1})=0$, a contradiction.
\end{proof}

The following result shows a simple classification of SODs using the finite finest t-stability approach.

\begin{prop}\label{any sod construction}
Assume that each finite t-stability on $\mathcal{D}$ is coarser than a finite finest one.
Then for any finite t-stability $(\Psi_m,\{P_\psi\}_{\psi\in\Psi_m})$, there exists a finite
t-stability $(\Phi_n,\{\Pi_{i}\}_{i\in\Phi_n})$ and a decomposition
$\Phi_n=I_{1}\cup I_{2}\cup\cdots\cup I_{m}$ with $\varphi>\varphi'$ for any
$\varphi\in I_i, \varphi'\in I_{i}$ and $i>i'$, such that
$$P_{i}=\langle\Pi_{j}\mid j\in I_{i}\rangle.$$
Moreover, $(\Phi_n,\{\Pi_{i}\}_{i\in\Phi_n})$ can be taken from the set of finite finest t-stabilities.
\end{prop}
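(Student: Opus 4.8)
The plan is to read the conclusion off directly from the standing hypothesis together with Definition \ref{finer or coarser}, so that essentially no new construction is needed. Let $(\Psi_m,\{P_\psi\}_{\psi\in\Psi_m})$ be an arbitrary finite t-stability on $\mathcal{D}$. By assumption it is coarser than a finite finest one, say $(\Phi_n,\{\Pi_i\}_{i\in\Phi_n})$; in particular $\Phi_n$ is a finite linearly ordered set, so after relabelling (cf. the discussion following Definition \ref{equiv of finite t-stab}) we may take $\Phi_n=\{1,\dots,n\}$ and $\Psi_m=\{1,\dots,m\}$, with $\tau_\Phi=\tau_\Psi=\mathrm{id}$. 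Since this finest t-stability is finer than $(\Psi_m,\{P_\psi\}_{\psi\in\Psi_m})$, Definition \ref{finer or coarser} supplies a surjection $r\colon\Phi_n\to\Psi_m$ with $r\tau_\Phi=\tau_\Psi r$ (trivially satisfied here), with the monotonicity property ``$j_1>j_2$ implies $r(j_1)\ge r(j_2)$'', and with $P_\psi=\langle\Pi_j\mid j\in r^{-1}(\psi)\rangle$ for every $\psi\in\Psi_m$. Observe that the final ``moreover'' clause is then automatic, because $(\Phi_n,\{\Pi_i\}_{i\in\Phi_n})$ was selected finite finest from the outset.

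Next I would set $I_i:=r^{-1}(i)\subseteq\Phi_n$ for each $i\in\Psi_m$. Since $r$ is a surjection of sets, the fibres $I_i$ are nonempty and give a partition $\Phi_n=I_1\cup I_2\cup\cdots\cup I_m$. The desired identity $P_i=\langle\Pi_j\mid j\in I_i\rangle$ is precisely property (3) of $r$. For the order compatibility (to be read with the evident correction $\varphi'\in I_{i'}$), take $\varphi\in I_i$ and $\varphi'\in I_{i'}$ with $i>i'$; I would argue by trichotomy in the linearly ordered set $\Phi_n$. The case $\varphi=\varphi'$ cannot occur, as it would force $i=r(\varphi)=r(\varphi')=i'$; the case $\varphi<\varphi'$ cannot occur either, since monotonicity of $r$ would then yield $i'=r(\varphi')\ge r(\varphi)=i$. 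Hence $\varphi>\varphi'$, which completes the verification. (Each $I_i$ is then automatically an interval of $\Phi_n$, in agreement with the notation $\Pi_I$ used earlier, although this is not needed for the statement.)

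I do not expect a genuine obstacle: the proposition is in effect a bookkeeping translation of the partial order $\preceq$ on finite t-stabilities, and all the real content sits in the hypothesis. The two points that call for a little care are: (i) invoking the hypothesis in the correct direction — it guarantees a finite finest t-stability lying \emph{below} $(\Psi_m,\{P_\psi\}_{\psi\in\Psi_m})$ with respect to $\preceq$, and it is this finer one, not $(\Psi_m,\{P_\psi\}_{\psi\in\Psi_m})$ itself, that provides the data $(\Phi_n,\{\Pi_i\}_{i\in\Phi_n})$; and (ii) upgrading the ``strict implies weak'' monotonicity of Definition \ref{finer or coarser}(2) to the strict inequality $\varphi>\varphi'$ demanded in the statement, which is exactly why the trichotomy step, rather than a naive contrapositive, is the clean way to finish.
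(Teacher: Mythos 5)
Your proof is correct and follows essentially the same route as the paper: invoke the hypothesis to obtain a finite finest t-stability finer than the given one, take $I_i=r^{-1}(i)$ for the surjection $r$ of Definition \ref{finer or coarser}, and read off $P_i=\langle\Pi_j\mid j\in I_i\rangle$ from condition (3). The only addition is your explicit trichotomy argument upgrading the weak monotonicity of $r$ to the strict ordering of the blocks $I_i$ (and the sensible reading $\varphi'\in I_{i'}$), which the paper leaves to the reader.
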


\begin{proof}
By assumption,  let $(\Phi_n,\{\Pi_{i}\}_{i\in\Phi_n})$ be a finite finest t-stability which is finer than $(\Psi_m,\{P_{\psi}\}_{\psi\in\Psi_m})$.
Then there is a surjective map $r:\Phi_n\rightarrow\Psi_m$ such that
$$P_i=\langle \Pi_{j}\mid r(j)=i\rangle.$$ Set $I_{i}=\{j\in\Phi_n\mid r(j)=i\}$.
Then the result follows.
\end{proof}

\section{Mutation of SODs and admissible filtrations}  \label{Sect 4}

In this section we study the relationship between mutations of partially ordered SODs and admissible filtrations, and show a one-to-one correspondence between them.

\subsection{Mutations of SODs and  filtrations}

Let $\mathcal{D}$ be a triangulated category.
Recall that a SOD (t-stability, finite filtration) of $\mathcal{D}$ is called admissible if each subcategory appeared therein is admissible.

The notion of SOD mutations, implicit in works such as \cite{bonk} and \cite{kuzn}, has been of great use.

\begin{defn}\label{mutation of admiss sod}
Assume that $(\Pi_i;i\in\Phi_n)$ is a SOD of $\mathcal{D}$.
Fix an integer $1\leq i\leq n-1$ and let
$$\displaystyle \Pi'_{j}= \left\{\begin{array}{l} \Pi^{\bot}_{i}\cap\langle \Pi_{i}, \Pi_{i+1}\rangle,\quad j=i,\\
 \Pi_{i},\quad\quad\quad\quad\quad\quad\;\;\,  j=i+1,\\
\Pi_{j},\quad\quad\quad\quad\quad\quad\;\;\, j\in \Phi_n\setminus\{i,i+1\}. \end{array}\right.$$
Then the SOD $\rho_i(\Pi_i;i\in\Phi_n)=(\Pi'_i;i\in\Phi_n)$ constructed above is called the \emph{right mutation} of $(\Pi_i;i\in\Phi_n)$ at $\Pi_{i}$ for $1\leq i\leq n-1$.
Similarly, one can define \emph{left mutation} $\check{\rho}_i$.
\end{defn}

For an admissible SOD $(\Pi_i;i\in\Phi_n)$, we have
$$\rho_i\check{\rho}_i(\Pi_i;i\in\Phi_n)=
(\Pi_i;i\in\Phi_n)=
\check{\rho}_i\rho_i(\Pi_i;i\in\Phi_n).$$

We call a SOD $\infty$-admissible if all its iterated mutations are admissible.

\begin{defn}[\cite{bonk}]  \label{mutation of finit admiss subcat seq}
Assume that
$\mathscr{T}:\; 0=\mathcal{T}_0\subsetneq\mathcal{T}_1\subsetneq\cdots\subsetneq\mathcal{T}_{n-1}
\subsetneq\mathcal{T}_{n}=\mathcal{D}
$
is a finite  filtration in $\mathcal{D}$.
Fix an integer $1\leq i\leq n-1$ and let
$$\displaystyle \mathcal{T}'_{j}= \left\{\begin{array}{l} \langle  \mathcal{T}^{\bot}_{i}\cap\mathcal{T}_{i+1},
\mathcal{T}_{i-1}\rangle,\quad j=i,\\
\mathcal{T}_{j},\quad\quad\quad\quad\quad\quad\quad\;\;  j\in \Phi_n\setminus\{i\}.\\
 \end{array}\right.$$
Then
$
\mathscr{T}'=\sigma_i\mathscr{T}:\; 0=\mathcal{T}'_0\subsetneq\mathcal{T}'_1\subsetneq\cdots\subsetneq\mathcal{T}'_{n-1}
\subsetneq\mathcal{T}'_{n}=\mathcal{D},
$
constructed above is called the \emph{right mutation} of $\mathscr{T}$ at $\mathcal{T}_{i}$ for $1\leq i\leq n-1$.
Similarly, one can define \emph{left mutation} $\check{\sigma}_i$.
\end{defn}

Recall from \cite{bonk} that a finite filtration $\mathscr{T}$ is called \emph{$\infty$-admissible} if all the iterated right and left mutations of $\mathscr{T}$ are admissible.
Inspired by the relationship with admissible SODs in Proposition \ref{finit t-stab and finite right admiss}, we say that a finite filtration $\mathscr{T}$ is \emph{$\infty$-strongly admissible} (\emph{$\infty$-s-admissible}) if all the iterated right and left mutations of $\mathscr{T}$ are s-admissible.

For a finite admissible filtration $\mathscr{T}$, we have obviously that $\sigma_i\check{\sigma}_i\mathscr{T}=\mathscr{T}=
\check{\sigma}_i\sigma_i\mathscr{T}$.
Moreover,
$\sigma_i$ preserves finite $\infty$-s-admissible filtrations in $\mathcal{D}$ by \cite{bonk}.

\subsection{A bijection and compatibility}
We define a natural partial order on admissible filtrations as follows.
\begin{defn}
Let
\begin{equation*}
\mathscr{X}:\; 0=\mathcal{X}_0\subsetneq\mathcal{X}_1\subsetneq\cdots\subsetneq\mathcal{X}_{n-1}
\subsetneq\mathcal{X}_{n}=\mathcal{D},
\end{equation*}
\begin{equation*}
\mathscr{Y}:\; 0=\mathcal{Y}_0\subsetneq\mathcal{Y}_1\subsetneq\cdots\subsetneq\mathcal{Y}_{m-1}
\subsetneq\mathcal{Y}_{m}=\mathcal{D},
\end{equation*} be
two finite left (right) admissible filtrations.
They are called \emph{equivalent} if there exists an order-preserved bijective map $r:\Phi_n\rightarrow\Psi_m$ such that  $\mathcal{Y}_{r(i)}=\mathcal{X}_i$ for all $i\in\Phi_n$; $\mathscr{X}$ is called \emph{finer} than $\mathscr{Y}$, if there exists a surjective map $r:\Phi_n\rightarrow\Phi_m$ such that
\begin{itemize}
 \item [(1)] $i'>i''$ implies $ r(i')\geq r(i'')$;
  \item [(2)] for any $j\in\Phi_m$, $\mathcal{Y}_{j}=\langle \mathcal{X}_{i}\mid r(i)= j\rangle$.
\end{itemize}
\end{defn}
Minimal elements with respect to this partial ordering is called
the \emph{finest} left (right) admissible filtrations.

We are in a position to show the following bijection.

\begin{prop}\label{mutaion preserve finest}
Keep notations as above. Then $\xi$ restricts to a bijection
$$\begin{array}{lcr}
\xi:\left\{
\mbox{finest $\infty$-admissible SODs of $\mathcal{D}$}
\right\}&
\stackrel{1:1}{\longleftrightarrow}&
\left\{ \mbox{finite finest $\infty$-s-admissible filtrations in $\mathcal{D}$} \right\},
\end{array}$$
which is compatible with mutations on both sides.
\end{prop}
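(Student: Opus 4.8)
The plan is to build directly on Proposition \ref{finit t-stab and finite right admiss}, which already gives a bijection $\xi$ between admissible SODs and finite $s$-admissible filtrations, together with the explicit inverse constructed in its proof (namely $\Pi_n = \mathcal{T}_1$ and $\Pi_i = \mathcal{T}_{n-i}^\bot \cap \mathcal{T}_{n-i+1}$). First I would show that $\xi$ and its inverse intertwine the two notions of "finer": given an admissible SOD $(\Pi_i;i\in\Phi_n)$ finer than $(P_\psi;\psi\in\Psi_m)$ via a surjection $r:\Phi_n\to\Psi_m$, the associated filtration $\Pi_{\geq n}\subsetneq\cdots\subsetneq\Pi_{\geq 1}=\mathcal{D}$ is obtained from the one for $(P_\psi;\psi\in\Psi_m)$ by inserting the intermediate terms $\Pi_{\geq i}$ between consecutive $P_{\geq\psi}$'s; conversely, coarsening the SOD corresponds exactly to deleting terms from the filtration. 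This is a bookkeeping argument using $\Pi_{\geq i}=\langle\Pi_j\mid j\geq i\rangle$ and $P_{\geq\psi}=\langle P_{\psi'}\mid\psi'\geq\psi\rangle=\langle\Pi_j\mid r(j)\geq\psi\rangle$, so that deleting $\mathcal{T}_i$ from a filtration corresponds to merging $\Pi_{n-i+1}$ with its neighbor on the SOD side. Once this "finer $\leftrightarrow$ finer" dictionary is in place, $\xi$ carries minimal (finest) admissible SODs to minimal ($=$ finest) $s$-admissible filtrations bijectively.

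Next I would handle the $\infty$-admissibility condition. Here the key point is that the mutations $\rho_i$ of SODs (Definition \ref{mutation of admiss sod}) and the mutations $\sigma_i$ of filtrations (Definition \ref{mutation of finit admiss subcat seq}) correspond under $\xi$; this is essentially already recorded in \cite{bonk}, but I would spell out the index shift: mutating a SOD at position $i$ corresponds, under the identification $\mathcal{T}_k = \Pi_{\geq n-k+1}$, to mutating the filtration at position $n-i$ (and dually for left mutations $\check\rho_i \leftrightarrow \check\sigma_{n-i}$). Granting this, "all iterated mutations of the SOD are admissible" translates term-by-term into "all iterated mutations of the filtration are $s$-admissible" — using again that $\Pi_i = \mathcal{T}^\bot_{n-i}\cap\mathcal{T}_{n-i+1}$ is the term whose admissibility in $\mathcal{D}$ is precisely the $s$-admissibility condition on the $(n-i)$-th step. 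So $\xi$ restricts to a bijection between $\infty$-admissible SODs and $\infty$-$s$-admissible filtrations, and this restriction is compatible with mutations by construction.

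Finally I would combine the two reductions: $\xi$ sends finest $\infty$-admissible SODs to filtrations that are simultaneously finest (by the first step) and $\infty$-$s$-admissible (by the second step), and the inverse does the reverse, giving the claimed bijection; compatibility with mutations is inherited from the second step. The main obstacle I anticipate is the first step — verifying cleanly that $\xi$ is monotone for the two "finer" relations, in particular checking that each intermediate subcategory $\Pi_{\geq i}$ appearing after refinement really is right admissible in the next one (so that the refined filtration is a legitimate admissible filtration, not merely a chain of subcategories) and that the surjection $r$ on index sets matches up with the "insert/delete terms" operation without off-by-one errors in the order-reversal $i\mapsto n-i+1$. Once the index conventions are pinned down, the admissibility bookkeeping follows from Lemma \ref{right addmiss and sod} and \cite[Lem. 3.1]{bond} exactly as in the proof of Proposition \ref{finit t-stab and finite right admiss}.
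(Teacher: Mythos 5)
Your proposal is correct and follows essentially the same route as the paper: the paper likewise reduces the statement to (i) the order-preserving property of $\xi$ (which it simply notes, while you sketch the insert/delete-terms bookkeeping) and (ii) the mutation compatibility $\xi\rho_i=\sigma_{n-i}\xi$ with exactly the index shift $i\mapsto n-i$ you identify, which the paper verifies by the explicit computation $\mathcal{T}^{\bot}_{n-i}\cap\mathcal{T}_{n-i+1}=\Pi_i$ that you defer to \cite{bonk}. No substantive difference in strategy.
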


\begin{proof}
Note that the bijection $\xi$ in
\ref{finit t-stab and finite right admiss} is order-preserving.
Hence, it suffices to show that $\xi$ is compatible with mutations.
We treat only right mutations, the left case being similar.
Let $(\Pi_i;i\in\Phi_n)$ be an $\infty$-admissible SOD of $\mathcal{D}$.
Fix  $1\leq i\leq n-1$, we will prove that $\xi\rho_{i}=\sigma_{n-i}\xi$.

Applying the right mutation to $(\Pi_i;i\in\Phi_n)$ at $\Pi_{i}$
gives an $\infty$-admissible SOD
$(\Pi'_i;i\in\Phi_n)$, where
$\Pi'_{i}=\Pi^{\bot}_{i}\cap\langle \Pi_{i}, \Pi_{i+1}\rangle$, $\Pi'_{i+1}=\Pi_{i}$, and $\Pi'_{j}=\Pi_{j}$ for $j\in\Phi_n\setminus\{i, i+1\}$.
Then, $\xi(\Pi'_i;i\in\Phi_n)=\mathscr{T}:\;
0=\mathcal{T}_0\subsetneq\mathcal{T}_1\subsetneq\cdots\subsetneq\mathcal{T}_{n-1}
\subsetneq\mathcal{T}_{n}=\mathcal{D}$
is  a finite s-admissible filtration,
where
$\mathcal{T}_{n-i}=\langle \Pi_{i},\Pi_{\geq i+2}\rangle$,
$\mathcal{T}_{n-i+1}=\langle \Pi^{\bot}_{i}\cap\langle \Pi_{i}, \Pi_{i+1}\rangle, \Pi_{i}, \Pi_{\geq i+2}\rangle=\Pi_{\geq i}$ since $\langle \Pi^{\bot}_{i}\cap\langle \Pi_{i}, \Pi_{i+1}\rangle, \Pi_{i}\rangle=\langle \Pi_{i}, \Pi_{i+1}\rangle$, and $\mathcal{T}_{j}=\Pi_{\geq n-j+1}$ for  $j\in\Phi_n\setminus\{n-i, n-i+1\}$.
Hence, $\xi\rho_{i}(\Pi_i;i\in\Phi_n)=\mathscr{T}$.

On the other hand, $\xi(\Pi_i;i\in\Phi_n)=
\mathscr{T}':\;
0=\mathcal{T}'_0\subsetneq\mathcal{T}'_1\subsetneq\cdots\subsetneq\mathcal{T}'_{n-1}
\subsetneq\mathcal{T}'_{n}=\mathcal{D}$
is another finite s-admissible filtration,
where
$\mathcal{T}'_{i}=\Pi_{\geq n-i+1}\;\,\text{for}\;\,j\in\Phi_n$.
Now, applying the right mutation to $\mathscr{T}'$ at $\mathcal{T}'_{n-i}$ gives a filtration
$\mathscr{F}:\;
0=\mathcal{F}_0\subsetneq\mathcal{F}_1\subsetneq\cdots
\subsetneq\mathcal{F}_{n-1}
\subsetneq\mathcal{F}_{n}=\mathcal{D}$,
where $\mathcal{F}_{n-i}=\langle
\mathcal{T}^{\bot}_{n-i}\cap\mathcal{T}_{n-i+1},
\mathcal{T}_{n-i-1}\rangle$,
and $\mathcal{F}_{j}=\Pi_{\geq n-j+1}$ for  $j\in\Phi_n\setminus\{n-i\}$.
Therefore, $\sigma_{n-i}\xi(\Pi_i;i\in\Phi_n)=\mathscr{F}$.

Noticing that $\mathcal{T}^{\bot}_{n-i}\cap\mathcal{T}_{n-i+1}=
\Pi^{\bot}_{\geq i+1}\cap\Pi_{\geq i}=\Pi_{i}$, we obtain
$\mathcal{F}_{n-i}=\langle \Pi_{i},\mathcal{T}_{n-i-1}\rangle=\langle \Pi_{i},\Pi_{\geq i+2}\rangle$.
This gives $\mathcal{F}_{j}=\mathcal{T}_{j}$ for all $j\in\Phi_n$ and hence $\mathscr{F}=\mathscr{T}$,
which implies that
$\mathscr{F}$ is finite s-admissible.
Thus, $\sigma_{n-i}\xi(\Pi_i;i\in\Phi_n)=
\xi\rho_{i}(\Pi_i;i\in\Phi_n)$.
\end{proof}

The following braid relations hold for the operators $\rho_i$ on the set of finest $\infty$-admissible SODs.
\begin{cor}
The operators $\rho_i$ satisfy the  braid relations:
$$\begin{array}{lcr}
\mbox{$\rho_i\rho_j=\rho_j\rho_i$ for $\lvert i-j\rvert\geq2$, and $\rho_i\rho_{i+1}\rho_i=\rho_{i+1}\rho_i\rho_{i+1}$.}
\end{array}$$
\end{cor}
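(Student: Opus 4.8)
The plan is to transport the braid relations through the bijection $\xi$ of Proposition \ref{mutaion preserve finest} to the corresponding relations for the mutation operators $\sigma_i$ on finite finest $\infty$-s-admissible filtrations, so that it suffices to verify braid relations on the filtration side. Concretely, since $\xi$ is a bijection compatible with mutations in the sense that $\xi\rho_i = \sigma_{n-i}\xi$ (established in the proof of Proposition \ref{mutaion preserve finest}), one has $\rho_i = \xi^{-1}\sigma_{n-i}\xi$; composing two or three such operators and cancelling the interior $\xi^{-1}\xi$ shows that $\rho_i\rho_j = \rho_j\rho_i$ is equivalent to $\sigma_{n-i}\sigma_{n-j} = \sigma_{n-j}\sigma_{n-i}$, and $\rho_i\rho_{i+1}\rho_i = \rho_{i+1}\rho_i\rho_{i+1}$ is equivalent to $\sigma_{n-i}\sigma_{n-i-1}\sigma_{n-i} = \sigma_{n-i-1}\sigma_{n-i}\sigma_{n-i-1}$. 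Here one must be mildly careful that the index shift $i\mapsto n-i$ is an order-reversing bijection of $\{1,\dots,n-1\}$, so commuting pairs go to commuting pairs and adjacent triples go to adjacent triples; the relations are therefore genuinely equivalent.

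It then remains to prove the braid relations for the operators $\sigma_i$ on the set of finite finest $\infty$-s-admissible filtrations, which is a purely formal computation with the explicit formula in Definition \ref{mutation of finit admiss subcat seq}. For $|i-j|\geq 2$ the two mutations $\sigma_i$ and $\sigma_j$ modify disjoint terms of the filtration: $\sigma_i$ only alters $\mathcal{T}_i$, using $\mathcal{T}_{i-1}$, $\mathcal{T}_i$, $\mathcal{T}_{i+1}$, and $\sigma_j$ only alters $\mathcal{T}_j$, using $\mathcal{T}_{j-1}$, $\mathcal{T}_j$, $\mathcal{T}_{j+1}$; when $|i-j|\geq 2$ these index windows are disjoint, so the two operations do not interfere and $\sigma_i\sigma_j = \sigma_j\sigma_i$ follows immediately. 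For the length-three relation $\sigma_i\sigma_{i+1}\sigma_i = \sigma_{i+1}\sigma_i\sigma_{i+1}$, I would expand both sides starting from a filtration with relevant terms $\mathcal{T}_{i-1}\subsetneq \mathcal{T}_i\subsetneq \mathcal{T}_{i+1}\subsetneq \mathcal{T}_{i+2}$ and track the three middle subcategories $\mathcal{T}_i$, $\mathcal{T}_{i+1}$ through each mutation, repeatedly using the identity $\langle \mathcal{T}^{\bot}_{i}\cap\mathcal{T}_{i+1}, \mathcal{T}_i\rangle = \mathcal{T}_{i+1}$ (valid by right admissibility) together with the dual identity on left orthogonals, and the fact that all intermediate filtrations remain s-admissible because the filtration is $\infty$-s-admissible. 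Both sides should reduce to the same filtration, in which the terms at positions $i$ and $i+1$ are expressed via a common iterated orthogonal-complement construction.

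Alternatively, and perhaps more cleanly, I would invoke the well-known fact (going back to \cite{bonk}, and compatible with the mutation theory of exceptional sequences and SODs in \cite{kuzn}) that the mutation operators on the poset of semi-orthogonal filtrations of a fixed length $n$ already generate an action of the braid group $B_n$, so the relations hold tautologically once one checks that $\rho_i$ is well-defined as an operator on the set of finest $\infty$-admissible SODs — which is exactly the content of Proposition \ref{mutaion preserve finest} (via $\xi$, mutations preserve finite finest $\infty$-s-admissibility, hence preserve finest $\infty$-admissibility of SODs). The main obstacle, and the only place where real care is needed, is the bookkeeping in the length-three computation: one must make sure that every subcategory produced along the way is genuinely admissible (so that the orthogonal-complement identities apply and the mutation operators are honest bijections), and this is precisely where the $\infty$-s-admissibility hypothesis is used — without it the intermediate filtrations need not be admissible and the formal cancellation breaks down.
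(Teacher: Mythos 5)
Your proposal is correct and follows essentially the same route as the paper: both transport the relations through the mutation-compatible bijection $\xi$ of Proposition \ref{mutaion preserve finest} (with the order-reversing shift $i\mapsto n-i$), check the commuting relation directly, and reduce the length-three relation to the braid relations for the $\sigma_i$, which the paper simply quotes from \cite[Prop. 4.9]{bonk} — exactly your ``alternative'' route — rather than carrying out the filtration computation you sketch. (Only minor caveat: for $j=i+2$ the index windows used by $\sigma_i$ and $\sigma_j$ do overlap at $\mathcal{T}_{i+1}$; the correct point is that the entry altered by each lies outside the window used by the other, which still gives commutation.)
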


\begin{proof}
The first relation is straightforward.
For the second one, the braid relations of $\sigma_i$ in \cite[Prop. 4.9]{bonk} imply
\begin{equation*}
    \xi(\rho_i\rho_{i+1}\rho_i)
    = (\sigma_{n-i}\sigma_{n-i-1}\sigma_{n-i})\xi =(\sigma_{n-i-1}\sigma_{n-i}\sigma_{n-i-1})\xi
     = \xi(\rho_{i+1}\rho_{i}\rho_{i+1}).
\end{equation*}
The bijectivity of $\xi$ therefore gives  $\rho_i\rho_{i+1}\rho_i=\rho_{i+1}\rho_{i}\rho_{i+1}$.
\end{proof}

\section{SODs and exceptional sequences}  \label{Sect 5}

In this section we classify SODs in terms of exceptional sequences in triangulated categories admitting a Serre functor, with direct applications to the derived categories of the projective plane, weighted projective lines, and finite acyclic quivers.

\subsection{Triangulated category with a Serre functor}
Throughout this section, the triangulated category $\mathcal{D}$ is assumed to have a Serre functor $\mathbb{S}$.

Recall from \cite{bonk} that a subcategory $\mathcal{A}\subset\mathcal{D}$ is called \emph{$\infty$-admissible} if all the iterated right
and left orthogonals to $\mathcal{A}$ are admissible.
Then we have the following results.

\begin{lem} [\cite{bond,bonk}] \label{right addmiss and sod2}
Any left  (right) admissible subcategory in $\mathcal{D}$ is $\infty$-admissible.
Moreover, if $\mathcal{A}\subset\mathcal{D}$ is right admissible with the inclusion functor $i$, then  $\mathcal{A}$ admits a Serre functor
$\mathbb{S}_{\mathcal{A}}=i^{!}\circ\mathbb{S}\circ i$.
\end{lem}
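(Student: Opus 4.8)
The statement is Lemma \ref{right addmiss and sod2}, which bundles two classical facts. The plan is to treat them in turn, reducing each to standard adjoint-functor manipulations.

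For the first assertion, suppose $\mathcal{A}\subset\mathcal{D}$ is left admissible, so by Lemma \ref{right addmiss and sod} the pair $(\mathcal{A},{}^{\bot}\mathcal{A})$ is a SOD and ${}^{\bot}\mathcal{A}$ is right admissible. First I would observe that the existence of a Serre functor $\mathbb{S}$ on $\mathcal{D}$ forces the two one-sided admissibilities to coincide: if $i\colon\mathcal{A}\hookrightarrow\mathcal{D}$ has a left adjoint $i^{*}$, then $\mathbb{S}^{-1}\circ i^{!}\circ\mathbb{S}$-type juggling, or more precisely the isomorphism $\Hom_{\mathcal{D}}(X,iA)\cong D\Hom_{\mathcal{D}}(iA,\mathbb{S}X)\cong D\Hom_{\mathcal{A}}(i^{*}\mathbb{S}X,A)\cong\Hom_{\mathcal{A}}((\mathbb{S}_{\mathcal{A}})^{-1}i^{*}\mathbb{S}X,A)$ once $\mathcal{A}$ is known to have a Serre functor, exhibits a right adjoint $i^{!}$; hence $\mathcal{A}$ is admissible. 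But admissibility is a self-dual notion in the presence of $\mathbb{S}$, so the same reasoning applies to every iterated orthogonal of $\mathcal{A}$: the orthogonals of an admissible subcategory are again admissible (by Lemma \ref{right addmiss and sod}(2) applied repeatedly, each ${}^{\bot}\mathcal{A}$ and $\mathcal{A}^{\bot}$ arises as one side of a SOD, hence is one-sided admissible, hence admissible by the above). An induction on the orthogonal-depth then yields that $\mathcal{A}$ is $\infty$-admissible. The mild circularity here (using a Serre functor on $\mathcal{A}$ to prove two-sided admissibility) is resolved by noting that the second part of the lemma supplies $\mathbb{S}_{\mathcal{A}}$ directly from right admissibility alone, so I would in fact prove the Serre-functor statement first and feed it back in.

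For the second assertion, let $\mathcal{A}\subset\mathcal{D}$ be right admissible with inclusion $i$ and right adjoint $i^{!}$; set $\mathbb{S}_{\mathcal{A}}:=i^{!}\circ\mathbb{S}\circ i$. I would check that this is a Serre functor on $\mathcal{A}$ by the chain of natural isomorphisms, for $A,B\in\mathcal{A}$,
\begin{align*}
\Hom_{\mathcal{A}}(A,\mathbb{S}_{\mathcal{A}}B)
&=\Hom_{\mathcal{A}}(A,i^{!}\mathbb{S}iB)
\cong\Hom_{\mathcal{D}}(iA,\mathbb{S}iB)\\
&\cong D\Hom_{\mathcal{D}}(iB,iA)
\cong D\Hom_{\mathcal{A}}(B,A),
\end{align*}
where the first isomorphism is the $(i,i^{!})$-adjunction, the second is the defining property of $\mathbb{S}$ on $\mathcal{D}$, and the third is fully faithfulness of $i$. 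One must also confirm that $\mathbb{S}_{\mathcal{A}}$ is an exact autoequivalence of $\mathcal{A}$: exactness follows since $i$, $\mathbb{S}$, $i^{!}$ are all exact; that it is an equivalence follows because a functor satisfying the Serre duality isomorphism is automatically an equivalence, or alternatively from the fact that $\mathcal{A}$, being admissible, carries $\mathbb{S}_{\mathcal{A}}^{-1}=i^{*}\circ\mathbb{S}^{-1}\circ i$ as a quasi-inverse. I would cite \cite{bonk,bond} for the naturality bookkeeping rather than spell it out.

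The main obstacle is not any single computation but the organizational point of avoiding circularity between the two claims: I would therefore reorder the exposition to establish $\mathbb{S}_{\mathcal{A}}=i^{!}\mathbb{S}i$ first for a right admissible $\mathcal{A}$, then use the symmetry $\mathcal{B}^{\bot}$-is-right-admissible-whenever-$\mathcal{B}$-is-left-admissible together with the induced Serre functors on each orthogonal to run the induction that delivers $\infty$-admissibility. The only genuinely delicate step is verifying that the Serre functor on an orthogonal subcategory is compatible under the mutation functors so that the induction closes; but since each orthogonal is itself of the form "right admissible subcategory of $\mathcal{D}$" (or passes to such after one mutation), the already-proven formula $i^{!}\mathbb{S}i$ applies verbatim at each stage, and the induction is routine.
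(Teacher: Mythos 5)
The paper offers no proof of this lemma---it is quoted from \cite{bond,bonk}---so your sketch has to stand on its own against the classical argument, and there it has a genuine gap: the invertibility of $\mathbb{S}_{\mathcal{A}}=i^{!}\circ\mathbb{S}\circ i$. Your adjunction chain for the second assertion only produces the bifunctorial isomorphisms $\Hom_{\mathcal{A}}(A,\mathbb{S}_{\mathcal{A}}B)\cong D\Hom_{\mathcal{A}}(B,A)$, and a functor equipped with such isomorphisms is in general only fully faithful (a ``right Serre functor'' in the sense of Reiten--Van den Bergh); it is \emph{not} automatically essentially surjective, so the claim ``a functor satisfying the Serre duality isomorphism is automatically an equivalence'' cannot be invoked. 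Essential surjectivity is exactly the nontrivial content here, and your fallback---that $i^{*}\circ\mathbb{S}^{-1}\circ i$ is a quasi-inverse because $\mathcal{A}$ is admissible---presupposes the left adjoint $i^{*}$, i.e. precisely the two-sidedness that your first part is supposed to deduce from the second. Reordering the two claims does not break this circle: from right admissibility alone you obtain a right Serre functor on $\mathcal{A}$, from left admissibility alone a left Serre functor $i^{*}\circ\mathbb{S}^{-1}\circ i$, and in either case the missing adjoint is obtained by (co)representing $D\Hom_{\mathcal{A}}(-,Y)$ for $Y\in\mathcal{A}$, which is possible exactly when the one-sided Serre functor you already have is dense. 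That implication (one-sided admissible $\Rightarrow$ admissible, equivalently $i^{!}\mathbb{S}i$ is essentially surjective) is the actual theorem of Bondal--Kapranov being cited; your argument assumes it at the decisive moment rather than proving it.

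Two subsidiary problems reinforce this. First, the displayed chain in your opening paragraph is mis-directed: the adjunction $i^{*}\dashv i$ gives $\Hom_{\mathcal{A}}(i^{*}\mathbb{S}X,A)\cong\Hom_{\mathcal{D}}(\mathbb{S}X,iA)$, not $\cong\Hom_{\mathcal{D}}(iA,\mathbb{S}X)$, and in any case representing the functor $A\mapsto\Hom_{\mathcal{D}}(X,iA)$ only reconstructs the left adjoint you already have; the missing right adjoint would be $i^{!}=\mathbb{S}_{\mathcal{A}}\circ i^{*}\circ\mathbb{S}^{-1}$, which again requires an honest (invertible) Serre functor on $\mathcal{A}$---the same unproven point---and note that for a merely left admissible $\mathcal{A}$ the formula $i^{!}\mathbb{S}i$ of part two is not even available, so part two cannot ``feed back'' as claimed. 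Second, for the $\infty$-admissibility statement the efficient mechanism is the identity ${}^{\bot}\mathcal{A}=\mathbb{S}^{-1}(\mathcal{A}^{\bot})$ (together with $({}^{\bot}\mathcal{B})^{\bot}=\mathcal{B}$ and ${}^{\bot}(\mathcal{B}^{\bot})=\mathcal{B}$ from Lemma \ref{right addmiss and sod}), which shows that every iterated orthogonal is an $\mathbb{S}^{\pm1}$-translate of $\mathcal{A}$ or $\mathcal{A}^{\bot}$ and hence admissible once $\mathcal{A}$ and $\mathcal{A}^{\bot}$ are; your induction as described (``each orthogonal is again a right admissible subcategory, so the formula applies verbatim'') silently invokes the unproven one-sided-to-two-sided upgrade at every stage. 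Until the essential surjectivity of $i^{!}\mathbb{S}i$ (or, equivalently, the construction of the second adjoint from the first plus $\mathbb{S}$) is supplied, the proposal does not establish the lemma.
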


\begin{lem} \label{infinity admissible for two subcats}
Assume that $0\subsetneq\mathcal{T}_1\subsetneq\mathcal{T}_2\subsetneq\mathcal{D}$
is a finite right admissible filtration in  $\mathcal{D}$.
Then $\mathcal{T}^{\bot}_{1}\cap\mathcal{T}_{2}$ is admissible in $\mathcal{D}$.
\end{lem}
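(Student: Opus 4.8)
The plan is to derive the statement purely formally from Lemmas \ref{right addmiss and sod} and \ref{right addmiss and sod2}, together with the elementary fact that adjoint functors compose. Set $\mathcal{C}:=\mathcal{T}_1^{\bot}\cap\mathcal{T}_2$. Since $\mathcal{T}_1\subseteq\mathcal{T}_2$, one sees at once that $\mathcal{C}=\{X\in\mathcal{T}_2\mid \Hom(B,X)=0\ \text{for all}\ B\in\mathcal{T}_1\}$; that is, $\mathcal{C}$ is the right orthogonal of $\mathcal{T}_1$ formed \emph{inside} the triangulated subcategory $\mathcal{T}_2$. This is the observation that lets one carry out the argument relative to $\mathcal{T}_2$ and then transport it up to $\mathcal{D}$.

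First I would equip $\mathcal{T}_2$ with a Serre functor. By hypothesis the filtration is right admissible, so $\mathcal{T}_2$ is right admissible in $\mathcal{D}$, and Lemma \ref{right addmiss and sod2} yields simultaneously that $\mathcal{T}_2$ is $\infty$-admissible, hence admissible, in $\mathcal{D}$, and that $\mathcal{T}_2$ admits a Serre functor $\mathbb{S}_{\mathcal{T}_2}$. Next I work inside $\mathcal{T}_2$: since $\mathcal{T}_1$ is right admissible in $\mathcal{T}_2$, Lemma \ref{right addmiss and sod}(2), applied with $\mathcal{T}_2$ in the role of the ambient category, shows that $(\mathcal{C},\mathcal{T}_1)$ is a SOD of $\mathcal{T}_2$; then Lemma \ref{right addmiss and sod}(1) gives that $\mathcal{C}$ is left admissible in $\mathcal{T}_2$. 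As $\mathcal{T}_2$ now carries a Serre functor, a second application of Lemma \ref{right addmiss and sod2}, this time to $\mathcal{T}_2$ in place of $\mathcal{D}$, shows that $\mathcal{C}$ is $\infty$-admissible, in particular admissible, in $\mathcal{T}_2$.

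It then remains to promote admissibility in $\mathcal{T}_2$ to admissibility in $\mathcal{D}$. The inclusion $\mathcal{C}\hookrightarrow\mathcal{D}$ factors through $i\colon\mathcal{C}\hookrightarrow\mathcal{T}_2$ and $j\colon\mathcal{T}_2\hookrightarrow\mathcal{D}$; here $i$ has a left adjoint $i^{*}$ and a right adjoint $i^{!}$ (admissibility of $\mathcal{C}$ in $\mathcal{T}_2$), while $j$ has a left adjoint $j^{*}$ and a right adjoint $j^{!}$ (admissibility of $\mathcal{T}_2$ in $\mathcal{D}$). Then $i^{*}j^{*}$ and $i^{!}j^{!}$ are, respectively, a left and a right adjoint of $ji$, so $\mathcal{C}=\mathcal{T}_1^{\bot}\cap\mathcal{T}_2$ is admissible in $\mathcal{D}$, which is what we want.

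The argument is mostly bookkeeping, and the one point demanding a little care, the nearest thing to a genuine obstacle, is the legitimacy of invoking Lemmas \ref{right addmiss and sod} and \ref{right addmiss and sod2} with $\mathcal{T}_2$ substituted for $\mathcal{D}$: these are general statements about $\textbf{k}$-linear triangulated categories (respectively, about such categories admitting a Serre functor), so they do apply to $\mathcal{T}_2$ once $\mathbb{S}_{\mathcal{T}_2}$ is in hand; and, as noted at the outset, the right orthogonal $\mathcal{T}_1^{\bot}\cap\mathcal{T}_2$ is insensitive to whether it is computed in $\mathcal{D}$ or in $\mathcal{T}_2$.
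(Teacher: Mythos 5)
Your proof is correct and follows essentially the same route as the paper: use Lemma \ref{right addmiss and sod2} to transport the Serre functor to $\mathcal{T}_2$, deduce that $\mathcal{T}_1^{\bot}\cap\mathcal{T}_2$ is admissible in $\mathcal{T}_2$, and then pass to $\mathcal{D}$ by transitivity of admissibility (which the paper cites from Bondal--Kapranov and you verify directly by composing the adjoints $i^{*}j^{*}$ and $i^{!}j^{!}$). The only cosmetic difference is that you obtain admissibility of $\mathcal{T}_1^{\bot}\cap\mathcal{T}_2$ inside $\mathcal{T}_2$ via the SOD $(\mathcal{T}_1^{\bot}\cap\mathcal{T}_2,\mathcal{T}_1)$ and a second Serre-functor upgrade, whereas the paper invokes the $\infty$-admissibility of $\mathcal{T}_1$ in $\mathcal{T}_2$; you also make explicit the (needed, and in the paper implicit) point that $\mathcal{T}_2$ is two-sided admissible in $\mathcal{D}$.
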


\begin{proof}
By Lemma \ref{right addmiss and sod2}, both $\mathcal{T}_{1}$ and $\mathcal{T}_{2}$ admit a Serre functor, and $\mathcal{T}_{1}$ is $\infty$-admissible in $\mathcal{T}_{2}$.
It follows that $\mathcal{T}^{\bot}_{1}\cap\mathcal{T}_{2}$ is admissible in $\mathcal{T}_{2}$.
By transitivity in \cite[Lem. 1.11]{bonk}, $\mathcal{T}^{\bot}_{1}\cap\mathcal{T}_{2}$ is admissible in $\mathcal{D}$.
\end{proof}

\begin{lem} \label{the three sets are equal under Serre}
Assume that
$
\mathscr{T}:\; 0=\mathcal{T}_0\subsetneq\mathcal{T}_1\subsetneq\cdots\subsetneq\mathcal{T}_{n-1}
\subsetneq\mathcal{T}_{n}=\mathcal{D}
$
is a finite filtration in $\mathcal{D}$.
Then $\mathscr{T}$ is left (right) admissible if and only if it is $\infty$-s-admissible.
\end{lem}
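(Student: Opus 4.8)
The statement claims that for a finite filtration $\mathscr{T}: 0=\mathcal{T}_0\subsetneq\cdots\subsetneq\mathcal{T}_n=\mathcal{D}$ in a triangulated category with Serre functor, being left (right) admissible is equivalent to being $\infty$-s-admissible. One direction is trivial in spirit: $\infty$-s-admissibility means \emph{all} iterated right and left mutations are s-admissible, and an s-admissible filtration is in particular right admissible (each $\mathcal{T}_i$ is right admissible in $\mathcal{T}_{i+1}$, as this is part of the definition), so $\infty$-s-admissible $\Rightarrow$ right admissible; the left-admissible case is symmetric via the duality relating the two induced filtrations \eqref{defn of induced chain of right admissible subcategories} and \eqref{defn of induced chain of left admissible subcategories} through Serre duality (applying $\mathbb{S}$ interchanges left and right orthogonals, so $\mathscr{T}$ is left admissible iff it is right admissible here). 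The substance is the forward direction.

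The plan is: assume $\mathscr{T}$ is right admissible. First, since $\mathcal{D}$ has a Serre functor, Lemma \ref{right addmiss and sod2} gives that each $\mathcal{T}_i$, being right admissible in $\mathcal{T}_{i+1}$, is in fact $\infty$-admissible in $\mathcal{T}_{i+1}$ and carries its own Serre functor. Inductively (transitivity of admissibility, \cite[Lem. 1.11]{bonk}), each $\mathcal{T}_i$ is then admissible in $\mathcal{D}$ and has a Serre functor. Next, to show $\mathscr{T}$ is s-admissible I must check each $\mathcal{T}_i^\bot\cap\mathcal{T}_{i+1}$ is admissible in $\mathcal{D}$: this is exactly Lemma \ref{infinity admissible for two subcats} applied to the sub-filtration $0\subsetneq\mathcal{T}_i\subsetneq\mathcal{T}_{i+1}\subsetneq\mathcal{D}$ (using that $\mathcal{T}_i\subset\mathcal{T}_{i+1}$ is right admissible and $\mathcal{T}_{i+1}\subset\mathcal{D}$ is right admissible, the latter because $\mathcal{T}_{i+1}$ is admissible in $\mathcal{D}$ by the previous step). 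So $\mathscr{T}$ is s-admissible.

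It then remains to see that this property is stable under all iterated mutations $\sigma_j,\check\sigma_j$. Here I would invoke that the mutation $\sigma_j$ of a right admissible filtration is again right admissible — this is the content of the mutation theory in \cite{bonk} (it is recorded earlier in the excerpt that $\sigma_i$ preserves finite $\infty$-s-admissible filtrations by \cite{bonk}, and the key input is that mutation of an admissible filtration stays admissible in a category with Serre functor, since admissible subcategories have Serre functors and the relevant orthogonals are then again admissible). Applying the argument of the previous paragraph to each iterated mutation $w(\mathscr{T})$ — which is again a right admissible filtration by that stability — shows $w(\mathscr{T})$ is s-admissible; hence $\mathscr{T}$ is $\infty$-s-admissible. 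The left-admissible version follows by the Serre duality symmetry noted above, or by running the same argument with left orthogonals and left adjoints throughout.

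The main obstacle is the closure-under-mutation step: I must be sure that a single right mutation $\sigma_j$ of a right admissible (equivalently, here, admissible) filtration is again admissible, so that the Lemma \ref{infinity admissible for two subcats} argument can be re-applied termwise. This reduces to the local claim that if $\mathcal{A}\subset\mathcal{B}\subset\mathcal{C}$ with each inclusion admissible and all three carrying Serre functors, then $\langle\mathcal{A}^\bot\cap\mathcal{B},\,\mathcal{A}'\rangle$-type subcategories appearing in $\sigma_j$ are admissible — which again rests on Lemma \ref{right addmiss and sod2} plus transitivity, exactly as in Lemma \ref{infinity admissible for two subcats}, and is available from \cite{bonk}. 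Everything else is bookkeeping on the indices and an appeal to results already in the excerpt.
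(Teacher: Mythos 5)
Your proposal is correct and follows essentially the same route as the paper: first deduce s-admissibility of $\mathscr{T}$ from Lemma \ref{infinity admissible for two subcats} (using Lemma \ref{right addmiss and sod2} and transitivity so each $\mathcal{T}_i$ is admissible in $\mathcal{D}$), then show a single mutation yields again an admissible filtration and iterate. The only difference is that you delegate the key closure-under-mutation step to \cite{bonk}, whereas the paper makes it explicit by observing that $(\mathcal{T}_i^{\bot}\cap\mathcal{T}_{i+1},\mathcal{T}_{i-1})$ is a SOD of $\mathcal{T}'_i=\langle\mathcal{T}_i^{\bot}\cap\mathcal{T}_{i+1},\mathcal{T}_{i-1}\rangle$ and invoking \cite[Prop.~1.12]{bonk}; the needed fact is indeed in that source, so this is a matter of detail rather than a gap.
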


\begin{proof}
The proof of sufficiency is straightforward.
To prove necessity,  let $\mathscr{T}$ be a left (right) admissible filtration.
It suffices to show that  $\sigma_i\mathscr{T}$ is s-admissible for any $1\leq i\leq n-1$.

By Lemma \ref{infinity admissible for two subcats}, each $\mathcal{T}^{\bot}_{i}\cap\mathcal{T}_{i+1}$ is admissible in $\mathcal{D}$.
Thus, $\mathscr{T}$ is s-admissible.
Let $\mathcal{T}'_{i}=\langle \mathcal{T}^{\bot}_{i}
\cap\mathcal{T}_{i+1},\mathcal{T}_{i-1}\rangle$.
Observe that $\mathcal{T}^{\bot}_{i}
\cap\mathcal{T}_{i+1}\subseteq\mathcal{T}^{\bot}_{i-1}$
since $\mathcal{T}_{i-1}\subsetneq\mathcal{T}_{i}$.
Thus, the pair $(\mathcal{T}^{\bot}_{i}
\cap\mathcal{T}_{i+1},\mathcal{T}_{i-1})$ forms a SOD of $\mathcal{T}'_{i}$.
This implies $\mathcal{T}^{\bot}_{i-1}\cap\mathcal{T}'_{i}=
\mathcal{T}^{\bot}_{i}
\cap\mathcal{T}_{i+1}$.
It follows from \cite[Prop. 1.12]{bonk} that $\langle \mathcal{T}^{\bot}_{i}
\cap\mathcal{T}_{i+1},\mathcal{T}_{i-1}\rangle$ is admissible in $\mathcal{D}$.
Therefore, $\sigma_i\mathscr{T}$ is admissible and hence s-admissible.
Analogously, one can prove that $\check{\sigma}_i\mathscr{T}$ is s-admissible.
Consequently, $\mathscr{T}$ is $\infty$-s-admissible.
\end{proof}

\begin{lem} \label{admissible sucat and sod gen by excep seq}
Any SOD of $\mathcal{D}$ is $\infty$-admissible.
\end{lem}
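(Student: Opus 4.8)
The plan is to show that every subcategory $\Pi_i$ appearing in a SOD $(\Pi_i;i\in\Phi_n)$ of $\mathcal{D}$ is admissible, and moreover that this property is preserved under all iterated mutations. First I would invoke Proposition \ref{finit t-stab and finite right admiss}: a SOD $(\Pi_i;i\in\Phi_n)$ corresponds under $\xi$ to the finite right admissible filtration \eqref{defn of induced chain of right admissible subcategories}, namely $0\subsetneq\Pi_{\geq n}\subsetneq\cdots\subsetneq\Pi_{\geq 1}=\mathcal{D}$. The starting point is therefore that an arbitrary SOD is, a priori, a finite right admissible filtration; by the dual construction \eqref{defn of induced chain of left admissible subcategories} it is also left admissible. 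The goal is to upgrade ``right admissible filtration'' to ``s-admissible filtration'', and then to ``$\infty$-s-admissible filtration'', after which Proposition \ref{finit t-stab and finite right admiss} returns an admissible SOD and $\infty$-admissibility follows.

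\textbf{Key steps.} The main tool is Lemma \ref{the three sets are equal under Serre}, which asserts that a finite filtration is left (right) admissible if and only if it is $\infty$-s-admissible. Concretely, I would argue as follows. Given any SOD of $\mathcal{D}$, its image $\xi(\Pi_i;i\in\Phi_n)$ is a finite right admissible filtration $\mathscr{T}$ in $\mathcal{D}$. By Lemma \ref{the three sets are equal under Serre}, $\mathscr{T}$ is $\infty$-s-admissible; in particular $\mathscr{T}$ itself is s-admissible, so each $\mathcal{T}^{\bot}_{i}\cap\mathcal{T}_{i+1}$ is admissible in $\mathcal{D}$. Translating back through the explicit formula $\Pi_{i}=\mathcal{T}^{\bot}_{n-i}\cap\mathcal{T}_{n-i+1}$ from the proof of Proposition \ref{finit t-stab and finite right admiss}, every $\Pi_i$ is admissible in $\mathcal{D}$, so the SOD is admissible. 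For $\infty$-admissibility one must check that every iterated right and left mutation of the SOD is again admissible; here I would use the compatibility $\xi\rho_i=\sigma_{n-i}\xi$ (and its left analogue) established in Proposition \ref{mutaion preserve finest}, together with the fact that the mutations $\sigma_i,\check\sigma_i$ of a finite right admissible filtration remain right admissible — which is immediate since the mutated filtration is again a finite filtration corresponding to a SOD, hence is covered by the same argument via Lemma \ref{the three sets are equal under Serre}. Thus every mutation of the original SOD is admissible, which is exactly $\infty$-admissibility.

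\textbf{Anticipated obstacle.} The delicate point is not the admissibility of the $\Pi_i$ themselves — that is almost immediate once one has Lemma \ref{the three sets are equal under Serre} — but rather the bookkeeping needed to see that mutations of SODs correspond correctly to mutations of filtrations and that the latter stay within the class of right admissible filtrations. One must be careful that the mutation operators $\rho_i$ on SODs are everywhere defined (not merely on admissible SODs), which is fine because Definition \ref{mutation of admiss sod} only requires forming $\Pi_i^{\bot}\cap\langle\Pi_i,\Pi_{i+1}\rangle$ inside the admissible subcategory $\langle\Pi_i,\Pi_{i+1}\rangle$; since $\langle\Pi_i,\Pi_{i+1}\rangle$ inherits a Serre functor by Lemma \ref{right addmiss and sod2}, its SODs are again admissible by the base case, so $\Pi'_i$ is admissible there and hence in $\mathcal{D}$ by transitivity \cite[Lem. 1.11]{bonk}. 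Assembling these observations, an induction on the number of mutations applied closes the argument; the only real care required is tracking orthogonals through the index reversal $i\leftrightarrow n-i$ in $\xi$.
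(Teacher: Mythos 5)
Your proposal is correct and follows essentially the same route as the paper: pass through $\xi$ to the induced finite right admissible filtration (Proposition \ref{finit t-stab and finite right admiss}), apply Lemma \ref{the three sets are equal under Serre} to upgrade it to an $\infty$-s-admissible filtration, and transfer back to the SOD side via the mutation-compatibility of $\xi$ in Proposition \ref{mutaion preserve finest}. The extra bookkeeping you supply (admissibility of each $\Pi_i$, the index reversal $i\leftrightarrow n-i$, and the induction on iterated mutations) is just an expanded version of what the paper's shorter proof leaves implicit.
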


\begin{proof}
Let $(\Pi_{i};i\in\Phi_n)$ be a SOD of $\mathcal{D}$.
By Proposition \ref{finit t-stab and finite right admiss}, there exists a
finite right admissible filtration
$
\mathscr{T}:\;
0=\mathcal{T}_0\subsetneq\mathcal{T}_1\subsetneq\cdots\subsetneq\mathcal{T}_{n-1}
\subsetneq\mathcal{T}_{n}=\mathcal{D},\;\,\text{where}\;\,
\mathcal{T}_{i}=\Pi_{\geq n-i+1}\;\,\text{for}\;\,i\in\Phi_n
$.
By Lemma \ref{the three sets are equal under Serre}, $\mathscr{T}$ is  $\infty$-s-admissible.
Hence, by Proposition \ref{mutaion preserve finest}, $(\Pi_{i};i\in\Phi_n)$ is $\infty$-admissible.
\end{proof}

\subsection{Exceptional sequences}
Recall that an object $E$ in $\mathcal{D}$ is called \emph{exceptional} if
$$\Hom(E,E[l])= \left\{\begin{array}{l} \mathbf{k},\quad l=0,\\
0,\quad  l\neq0.\\
 \end{array}\right.$$
An ordered sequence $(E_{i};i\in\Phi_n):=(E_{1},E_{2},\cdots,E_{n})$ in $\mathcal{D}$ is
called an \emph{exceptional sequence}  if each $E_i$ is exceptional and $\Hom(E_j,E_i[l])=0$ for all $i<j$ and $l\in\mathbb{Z}$. It is said to be \emph{full} if  $\langle E_{1},E_{2},\cdots,E_{n}\rangle=\mathcal{D}$.

The left mutation of an exceptional pair $\mathfrak{E}=(E,F)$ is the pair
$\mathrm{L}_{E}\mathfrak{E}=(L_{E}F,E)$, where $L_{E}F$ is defined by the triangle
$L_{E}F\rightarrow\Hom(E,F)\otimes E \rightarrow F\rightarrow L_{E}F[1]$.
The \emph{left mutation} $\mathrm{L}_{i}$ of an exceptional sequence $\mathfrak{E}=(E_{i};i\in\Phi_n)$ at $E_{i}$ is defined as the left mutation
of a pair of adjacent objects in this sequence:
$$\mathrm{L}_{i}\mathfrak{E}=(E_{1},E_{2},\cdots,E_{i-1},
L_{E_i}E_{i+1},E_i,E_{i+2},\cdots,E_{n}).$$
Similarly, one defines the \emph{right  mutation} $\mathrm{R}_{i}$ of an exceptional collection $\mathfrak{E}=(E_{i};i\in\Phi_n)$.

It is  known that an exceptional sequence generates an admissible subcategory, and a full exceptional sequence naturally induces a SOD, cf. \cite{bond}.
Let $(E_i;i \in \Phi_n)$ and $(F_i;i \in \Phi_n)$ be two exceptional sequences in  $\mathcal{D}$.
We say that  $(E_i;i \in \Phi_n)$ is \emph{equivalent}  to $(F_i;i \in \Phi_n)$ if $\langle E_i\rangle=\langle F_i\rangle$ for all $i \in \Phi_n$.

In the rest of this section, we impose the following condition.
\begin{assump}\label{Assump}
Every admissible subcategory of $\mathcal{D}$ is generated by an exceptional sequence.
\end{assump}
This enables us to obtain the following result.

\begin{thm} \label{finest sod and full exceptional sequence}
Keep notations as above.
There is a bijection
\begin{eqnarray*}
 \chi:\{\mbox{equivalence classes  of full  exceptional sequences in $\mathcal{D}$}\}&\longrightarrow&
\{\mbox{finest SODs  of $\mathcal{D}$}\}, \\
(E_1, E_2,\cdots,  E_n) &\longmapsto&
(\langle E_1\rangle, \langle E_2\rangle,\cdots, \langle E_n\rangle)
\end{eqnarray*}
which is compatible with mutations in the sense that
\begin{eqnarray*}
\chi\mathrm{L}_i=\rho_i\chi & \mbox{and} &\chi\mathrm{R}_i=\check{\rho}_i\chi.
\end{eqnarray*}
\end{thm}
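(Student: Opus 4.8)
The plan is to verify that $\chi$ is well-defined, bijective, and mutation-compatible, building on the structural results already established. First I would check that $\chi$ lands in the set of \emph{finest} SODs. Given a full exceptional sequence $(E_1,\dots,E_n)$, each $\langle E_i\rangle$ is admissible (it is generated by a single exceptional object), and the Hom-vanishing $\Hom(E_j,E_i[l])=0$ for $i<j$ together with fullness shows $(\langle E_1\rangle,\dots,\langle E_n\rangle)$ is a SOD. To see it is finest, I would invoke Proposition \ref{necess cond for finest SOD}: one needs that for $0\neq X,Y\in\langle E_i\rangle$, both $\Hom(\langle X\rangle,\langle Y\rangle)$ and $\Hom(\langle Y\rangle,\langle X\rangle)$ are nonzero. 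Since $\langle E_i\rangle$ is the triangulated hull of an exceptional object $E_i$, every nonzero object of $\langle E_i\rangle$ has $E_i$ (up to shift) as a subquotient, so $\Hom^\bullet(X,E_i)\neq 0\neq\Hom^\bullet(E_i,Y)$ and hence, composing through $E_i$, one gets $\Hom^\bullet(X,Y)\neq 0$ in both directions; this is the content that makes each $\Pi_i=\langle E_i\rangle$ indecomposable as a t-stability. That $\chi$ is well-defined on equivalence classes is immediate from the definition of equivalence of exceptional sequences.

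Next I would establish injectivity and surjectivity. Injectivity is clear: if $\langle E_i\rangle=\langle F_i\rangle$ for all $i$, the sequences are equivalent by definition. For surjectivity, let $(\Pi_i;i\in\Phi_n)$ be a finest SOD. By Lemma \ref{admissible sucat and sod gen by excep seq} it is $\infty$-admissible, so each $\Pi_i$ is admissible, and by Assumption \ref{Assump} each $\Pi_i$ is generated by an exceptional sequence, say of length $m_i$. Concatenating these in order produces a full exceptional sequence (the Hom-vanishing between blocks follows from the SOD condition, within blocks from the exceptionality), hence a SOD that refines $(\Pi_i;i\in\Phi_n)$ via the local-refinement construction of Proposition \ref{local finite refinement construction}. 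Since $(\Pi_i;i\in\Phi_n)$ is finest, this refinement must be equivalent to it, forcing $m_i=1$ for all $i$; thus $\Pi_i=\langle E_i\rangle$ for a genuine exceptional sequence $(E_1,\dots,E_n)$, and $\chi$ of that class is $(\Pi_i;i\in\Phi_n)$.

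Finally I would prove the mutation compatibility $\chi\mathrm{L}_i=\rho_i\chi$ (the identity $\chi\mathrm{R}_i=\check\rho_i\chi$ being dual). Unwinding definitions, both sides change only the $i$-th and $(i+1)$-st entries: on the left, $\mathrm{L}_i$ replaces $(E_i,E_{i+1})$ by $(L_{E_i}E_{i+1},E_i)$, so $\chi\mathrm{L}_i$ has $i$-th subcategory $\langle L_{E_i}E_{i+1}\rangle$ and $(i+1)$-st subcategory $\langle E_i\rangle$; on the right, $\rho_i$ replaces $(\langle E_i\rangle,\langle E_{i+1}\rangle)$ by $(\langle E_i\rangle^\bot\cap\langle E_i,E_{i+1}\rangle,\ \langle E_i\rangle)$. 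So it suffices to identify $\langle L_{E_i}E_{i+1}\rangle=\langle E_i\rangle^\bot\cap\langle\langle E_i\rangle,\langle E_{i+1}\rangle\rangle$. This is the standard fact that the left-mutation triangle $L_{E_i}E_{i+1}\to\Hom(E_i,E_{i+1})\otimes E_i\to E_{i+1}$ realizes $L_{E_i}E_{i+1}$ as the image of $E_{i+1}$ under the left-adjoint/projection onto $\langle E_i\rangle^\bot$ inside $\langle E_i,E_{i+1}\rangle$, so both subcategories coincide (cf. \cite{bond}); all other entries are fixed by both operators, so the two SODs agree.

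The main obstacle I anticipate is the surjectivity step — specifically, making precise that a finest SOD cannot have any block $\Pi_i$ requiring an exceptional sequence of length $\geq 2$. The argument above reduces this to: a nontrivial exceptional \emph{pair} inside an admissible subcategory yields a strict local refinement, contradicting finestness. This in turn rests on Proposition \ref{local finite refinement construction} producing a genuinely finer t-stability, which I would need to check carefully against Definition \ref{finer or coarser} (the surjection $r$ is not a bijection precisely because some fiber has size $\geq 2$). Everything else is a bookkeeping matter of matching the mutation formulas.
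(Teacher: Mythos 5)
Your proposal is correct and follows essentially the same route as the paper: finestness via Proposition \ref{necess cond for finest SOD}, surjectivity by combining Lemma \ref{admissible sucat and sod gen by excep seq}, Assumption \ref{Assump} and the local refinement of Proposition \ref{local finite refinement construction} to force each block to be generated by a single exceptional object, and mutation compatibility by identifying $\langle L_{E_i}E_{i+1}\rangle$ with $\langle E_i\rangle^{\bot}\cap\langle E_i,E_{i+1}\rangle$. One small remark: for the finestness step the condition you need is $\Hom(\langle X\rangle,\langle Y\rangle)\neq 0$, which holds simply because $\langle X\rangle=\langle Y\rangle=\langle E_i\rangle$ contains $E_i$ (as in the paper), so the slightly shaky ``compose through $E_i$'' argument for $\Hom^{\bullet}(X,Y)\neq 0$ is not actually needed.
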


\begin{proof}
Let $(E_i;i \in \Phi_n)$ be a full exceptional sequence in $\mathcal{D}$.
We need to show  that the induced SOD $(E_i;i \in \Phi_n)$ with $\Pi_{i}=\langle E_i\rangle$ is finest.
Since $\Pi_{i}=\bigvee_{m\in\mathbb{Z}}\add\{E_i[m]\}$, it follows that
$$\Hom(\langle X\rangle,\langle Y\rangle)\supseteq
\Hom^{\bullet}(E_i,E_i)\subseteq
\Hom(\langle Y\rangle,\langle X\rangle)\;\, \text{for all}\;\, 0\neq X,Y\in\Pi_{i}.$$
Thus, by Proposition \ref{necess cond for finest SOD}, $(\Pi_{i};i\in\Phi_n)$ is finest.
This shows that $\chi$ is well-defined.

Conversely, let $(\Pi_{i};i\in\Phi_n)$ be a finest  SOD of $\mathcal{D}$.
By Lemma \ref{admissible sucat and sod gen by excep seq} and assumption, each $\Pi_i$ is admissible and generated by an exceptional sequence $(E_{\psi};\psi\in I_{i})$ with $I_{i}$ a linearly ordered set.

We claim $\lvert I_{i}\rvert=1$ for all $i\in\Phi_n$.
Otherwise, suppose that there exists $\lvert I_{i}\rvert\geq2$ for some $i$.
Let $P_{\psi}=\langle E_{\psi}\rangle$ for $\psi\in I_{i}$.
Then $\Pi_{i}=\langle P_{\psi}\mid\psi\in I_{i} \rangle$.
It follows that $(I_{i},\{P_{{\psi}}\}_{\psi \in I_{i}})$ is a locally finite t-stability on $\Pi_{i}$.
Let $\Psi=(\Phi_n\setminus\{i\}) \cup \{I_{i}\}$, which is a linearly ordered set with the relations $j'>\psi'>\psi''>j''$ for any $j'>i>j'' \in \Phi_n$ and any $\psi'>\psi''\in I_{i}$.
By Proposition \ref{local finite refinement construction},
$(\Psi,\{P_{\psi}\}_{\psi \in \Psi})$ is a finite t-stability on $\mathcal{D}$.
Therefore, we get a SOD $(P_{\psi};\psi \in \Psi)$  which is finer than $(\Pi_{i};i\in\Phi_n)$, a contradiction.
This proves the claim, implying that $(E_i;i \in \Phi_n)$ is a full exceptional sequence. A simple verification shows that this defines the inverse of $\chi$.

For compatibility of $\chi$ with mutations, we prove only the first equality; the other is similar.
Let $\mathfrak{E}=(E_j;j \in \Phi_n)$ be a full exceptional sequence in $\mathcal{D}$.
For a fixed index $1\leq i\leq n-1$, let $\mathrm{L}_i\mathfrak{E}=\mathfrak{E}'=(E'_j;j \in \Phi_n)$  be the left mutation of $\mathfrak{E}$ at $i$, where $E'_{j}=E_{j}$ for $j\in\Phi_n\setminus\{i,i+1\}$, $E'_{i+1}=E_{i}$,
and $E'_{i}$ is defined by the triangle
$E'_{i}\rightarrow\Hom(E_{i},E_{i+1})\otimes E_{i}
\rightarrow{}E_{i+1}\rightarrow E'_{i}[1]$.
Then $\chi\mathrm{L}_i\mathfrak{E}=(\Pi'_{j};j\in\Phi_n)$
is a finest SOD with $\Pi'_{j}=\langle E'_j\rangle$.

On the other hand,  $\chi\mathfrak{E}=(\Pi_{j};j\in\Phi_n)$ is a finest SOD
with $\Pi_{j}=\langle E_j\rangle$.
Its right mutation at $i$ yields another finest SOD
$\rho_i(\Pi_{j};j\in\Phi_n)=(\Pi''_{j};j\in\Phi_n)$, where $\Pi''_{i}=\Pi^{\bot}_{i}\cap\langle \Pi_{i}, \Pi_{i+1}\rangle,\;\, \Pi''_{i+1}=\Pi_{i},\;\,\text{and}\;\, \Pi''_{j}=\Pi_{j}\;\,\text{for}\;\,j\in\Phi_n\setminus\{i, i+1\}$.
Thus,  $\rho_i\chi\mathfrak{E}=(\Pi''_{j};j\in\Phi_n)$.

To prove $(\Pi'_{j};j\in\Phi_n)=(\Pi''_{j};j\in\Phi_n)$, note that $\Pi'_{j}=\Pi''_{j}$  for all $j\in\Phi_n\setminus\{i\}$.
Moreover,
\begin{equation*}
\begin{split}
   \Pi''_{i} &={}^{\bot}\langle \Pi_j\mid j\leq i-1\rangle\cap
\langle \Pi_i,\Pi_j\mid j\geq i+2\rangle^{\bot}={}^{\bot}\langle E_j\mid j\leq i-1\rangle\cap
\langle E_i,E_j\mid j\geq i+2\rangle^{\bot}.\\
\end{split}
\end{equation*}
Therefore, $\Pi''_{i}=\langle E'_i\rangle=\Pi'_{i}$, and we conclude that $\chi\mathrm{L}_i\mathfrak{E}=\rho_i\chi\mathfrak{E}$.
\end{proof}

\begin{cor}
Each finite t-stability on  $\mathcal{D}$ can be refined to a finite finest one.
\end{cor}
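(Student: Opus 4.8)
The plan is to deduce this corollary directly from the classification results already established in this section. The key observation is that Theorem~\ref{finest sod and full exceptional sequence} together with Proposition~\ref{t-stab and sod} reduces the statement to the following two facts: (a) every finite t-stability corresponds, via $\eta$, to a SOD; and (b) every SOD of $\mathcal{D}$ is coarser than a finest one. Once (b) is known, the desired refinement follows by combining it with Proposition~\ref{any sod construction} (or directly with the definition of the partial order), and translating back through the bijection $\eta$.

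First I would take an arbitrary finite t-stability $(\Phi_n,\{\Pi_i\}_{i\in\Phi_n})$ on $\mathcal{D}$ and pass to the associated SOD $(\Pi_i;i\in\Phi_n)$ via $\eta$. By Lemma~\ref{admissible sucat and sod gen by excep seq} each $\Pi_i$ is $\infty$-admissible, hence admissible, so by Assumption~\ref{Assump} each $\Pi_i$ is generated by an exceptional sequence $(E_\psi;\psi\in I_i)$ indexed by a linearly ordered set $I_i$. Exactly as in the proof of Theorem~\ref{finest sod and full exceptional sequence}, the pair $(I_i,\{\langle E_\psi\rangle\}_{\psi\in I_i})$ is a local finite t-stability on $\Pi_i$. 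Now I would apply the local refinement construction of Proposition~\ref{local finite refinement construction}: setting $\Psi=\bigcup_{i\in\Phi_n}I_i$ with the prescribed total order, the refinement $(\Psi,\{\langle E_\psi\rangle\}_{\psi\in\Psi})$ is a finite t-stability on $\mathcal{D}$ which is finer than $(\Phi_n,\{\Pi_i\}_{i\in\Phi_n})$.

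It remains to check that this refinement is itself finite finest. Each semistable subcategory is now of the form $\langle E_\psi\rangle=\bigvee_{m\in\mathbb{Z}}\add\{E_\psi[m]\}$ for an exceptional object $E_\psi$, so for any nonzero $X,Y\in\langle E_\psi\rangle$ one has $\Hom(\langle X\rangle,\langle Y\rangle)\supseteq\Hom^\bullet(E_\psi,E_\psi)\neq0$ and likewise $\Hom(\langle Y\rangle,\langle X\rangle)\neq0$. Thus the hypothesis of Proposition~\ref{necess cond for finest SOD} is satisfied, and the refinement is finite finest. Transporting back through $\eta$ (which is order-preserving by construction), the original finite t-stability is coarser than a finite finest one.

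The only point requiring care — and the main (mild) obstacle — is verifying that the linearly ordered set $\Psi$ in Proposition~\ref{local finite refinement construction} can genuinely be formed: one needs the $I_i$ to be disjoint (replace them by disjoint copies if necessary) and the $\infty$-admissibility invoked to guarantee each $\Pi_i$ is admissible so that Assumption~\ref{Assump} applies. Both are immediate from the results cited, so the corollary follows.
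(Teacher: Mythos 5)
Your proposal is correct and follows essentially the same route as the paper: refine each piece $\Pi_i$ of the associated SOD by the exceptional sequence guaranteed by Lemma~\ref{admissible sucat and sod gen by excep seq} and Assumption~\ref{Assump}, glue via the local refinement construction, and conclude finest-ness from the nonvanishing of $\Hom^\bullet(E_\psi,E_\psi)$ as in Proposition~\ref{necess cond for finest SOD}. The only cosmetic difference is that the paper invokes Theorem~\ref{finest sod and full exceptional sequence} and writes down the comparison map $r$ explicitly, whereas you cite Propositions~\ref{local finite refinement construction} and~\ref{necess cond for finest SOD} directly — the underlying argument is the same.
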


\begin{proof}
Let $(\Phi_m,\{\Pi_{i}\}_{i\in\Phi_m})$ be a finite t-stability, and thus $(\Pi_{i};i\in\Phi_m)$ is a SOD of $\mathcal{D}$.
Note that each $\Pi_{i}$ is generated by an exceptional sequence $ (E_{\psi};\psi\in I_{i})$.
This gives a decomposition $\Phi_n:=I_1\cup I_2\cup \cdots \cup I_m$ and a
full exceptional sequence $(E_{\psi};\psi \in \Phi_n)$.
Let $P_{\psi}=\langle E_{\psi}\rangle$ for each $\psi\in I_{i}$.
By Theorem \ref{finest sod and full exceptional sequence} and Proposition  \ref{t-stab and sod}, this gives a finite finest t-stability  $(\Phi_n,\{P_{\psi}\}_{\psi\in\Phi_n})$ on $\mathcal{D}$.

Let $r:\, \Phi_n \rightarrow \Phi_m$ be such that $r(\psi)=i$ for all $\psi\in I_i$.
From the decomposition of $\Phi_n$, we know that  $r(\psi')<r(\psi'')$ for $\psi'<\psi''\in\Phi_n$.
On the other hand, for any $i\in\Phi_m$,
$\Pi_{i}=\langle E_{\psi}\mid\psi\in I_{i}\rangle=
\langle E_{\psi}\mid\psi\in r^{-1}(i)\rangle$.
It follows that $(\Phi_n,\{P_{\psi}\}_{\psi\in\Phi_n})
\preceq(\Phi_m,\{\Pi_{i}\}_{i\in\Phi_m})$.
\end{proof}

\begin{rem} \label{surj cond}
The map $\chi$ in Theorem \ref{finest sod and full exceptional sequence} is injective for any triangulated category $\mathcal{D}$.

Now assume that each finite t-stability (and hence any SOD) on $\mathcal{D}$ can be refined to a finite finest one.
Observe that each admissible subcategory  $\mathcal{A}$ of  $\mathcal{D}$ fits into a SOD of the form  $(\mathcal{A}^{\bot},\mathcal{A})$ or $(\mathcal{A},{}^{\bot}\mathcal{A})$, which is necessarily coarser than a finest one.
Consequently, the surjectivity of $\chi$ implies that the assumption in Theorem \ref{finest sod and full exceptional sequence} must be satisfied, which holds for multiple important categories arising in algebraic geometry and representation theory, including (up to triangle equivalence) the derived categories of
\begin{itemize}
  \item [(1)] the coherent sheaves category over  the projective plane, cf. \cite[Thm. 4.2]{pir};
  \item [(2)] the coherent sheaves category over a weighted projective line of any type,
      cf. \cite[Cor. 8.7]{thic};
  \item [(3)] category of representations of any finite acyclic quiver, cf. \cite[Cor. 3.7]{robo}.
\end{itemize}

Let $(E_i;i \in \Phi_n)$ be a full exceptional sequence in $\mathcal{D}$, and let $\Phi_n=I_1\cup I_2\cup \cdots \cup I_m$ be a decomposition of $\Phi_n$ such that $\psi<\psi'$ whenever $\psi\in I_{i}$ and $\psi'\in I_{j}$ with $i<j$.
Set $\mathcal{E}_j=(E_{\psi};\psi\in I_{j})$. Then we say that $(\mathcal{E}_j;j \in \Phi_m)$ is a \emph{partition} of the full exceptional sequence $(E_i;i \in \Phi_n)$.

As a conclusion of Proposition \ref{any sod construction}, we obtain a classification of SODs  for these categories 
in terms of partitions of full exceptional sequences. In particular, the finest SODs are in one-to-one correspondence with  full exceptional sequences.
More precisely,
we have the following bijection:
\begin{eqnarray*}
 \chi^{-1}:\{\mbox{SODs  of $\mathcal{D}$}\}&\longrightarrow&
\{\mbox{partitions of equivalence classes of  full exceptional sequences in $\mathcal{D}$}\}.\\
(\langle \mathcal{E}_1\rangle, \langle \mathcal{E}_2\rangle,\cdots, \langle \mathcal{E}_m\rangle)&\longmapsto&
 (\mathcal{E}_1, \mathcal{E}_2,\cdots  \mathcal{E}_m)
\end{eqnarray*}
\end{rem}

\section{Reduction of SODs and their mutation graphs}

In this section we investigate SODs using a reduction method analogous to that for thick subcategories, and show a criterion for the connectedness of their mutation graphs.

\subsection{SOD reduction}
We start with the following collection of reductions of thick subcategories.
Let $\mathcal{U} \subseteq \mathcal{D}$ be a thick subcategory, $Q_{\mathcal{U}} : \mathcal{D} \rightarrow
\mathcal{D} / \mathcal{U}$ the quotient functor, where $\mathcal{D} / \mathcal{U}$ is the Verdier quotient.
From now onward, we abbreviate $Q_{\mathcal{U}}$ as $Q$.
Recall from \cite[Prop. II.2.3.1]{ver} that there is a bijection
\[
  \bigg\{\begin{array}{l}
      \mbox{triangulated subcategories $\mathcal{A}\subseteq\mathcal{D}$ } \\[2mm]
      \mbox{with $\mathcal{U}\subseteq\mathcal{A}\subseteq\mathcal{D}$}
    \end{array}\bigg\}
  \stackrel{1:1}{\longleftrightarrow}
  \bigg\{\begin{array}{l}
  \mbox{triangulated  subcategories of $\mathcal{D}/\mathcal{U}$}
  \end{array}\bigg\}
\]
taking $\mathcal{A} \mapsto Q\mathcal{A}$.
Under the bijection, thick subcategories correspond to thick subcategories.
Further, recall from \cite[Thm. C]{joka} that
there is a bijection
\[
  \bigg\{(\mathcal{X},\mathcal{Y} )
  \bigg|
    \begin{array}{l}
      \mbox{$\mathcal{X}, \mathcal{Y} \subseteq \mathcal{D}$ thick subcategories } \\[2mm]
      \mbox{with $\mathcal{X }\cap\mathcal{Y} = \mathcal{U}$ and $\mathcal{X} * \mathcal{Y} = \mathcal{D}$ }
    \end{array}\bigg\}
  \stackrel{1:1}{\longleftrightarrow}
  \bigg\{\begin{array}{l}
  \mbox{stable $t$-structures in $\mathcal{D}/\mathcal{U}$}
  \end{array}\bigg\}
\]
sending $(\mathcal{X},\mathcal{Y}) \mapsto (Q\mathcal{X},Q\mathcal{Y})$, where $\mathcal{X} * \mathcal{Y}$ is defined by
\[
  \mathcal{X} * \mathcal{Y}
  = \bigg\{Z \in \mathcal{D} \bigg|
      \begin{array}{ll}
         \mbox{there is a  triangle $X \rightarrow Z \rightarrow Y\rightarrow X[1]$ in $\mathcal{D}$
                with $X \in \mathcal{X}$, $Y \in \mathcal{Y}$}
      \end{array}
    \bigg\}.
\]
Note that $(\mathcal{X},\mathcal{Y})$ is a stable $t$-structure if and only if $(\mathcal{Y},\mathcal{X})$ is a SOD.
In this situation,  $\mathcal{X}$ is right admissible and $\mathcal{Y}$ is left admissible (both being thick subcategories), with natural equivalences $\mathcal{X}^{\bot}\simeq \mathcal{D}/\mathcal{X}$ and ${}^{\bot}\mathcal{Y}\simeq \mathcal{D}/\mathcal{Y}$, cf. \cite[Prop. 1.6]{bonk}.

\begin{lem} \label{admiss sub bijection}
For a (left) right admissible $\mathcal{U} \subsetneq \mathcal{D}$ with quotient $Q: \mathcal{D} \to \mathcal{D}/\mathcal{U}$, there is a bijection
\[
  \bigg\{\begin{array}{l}
      \mbox{(left) right admissible subcategories $\mathcal{A}$ of $\mathcal{D}$ } \\[2mm]
      \mbox{with $\mathcal{U}\subsetneq\mathcal{A}\subsetneq\mathcal{D}$}
    \end{array}\bigg\}
  \stackrel{1:1}{\longleftrightarrow}
  \bigg\{\begin{array}{l}
  \mbox{(left) right admissible subcategories of $\mathcal{D}/\mathcal{U}$}
  \end{array}\bigg\}
\]
{\em taking $\mathcal{A} \mapsto Q\mathcal{A}$. }
\end{lem}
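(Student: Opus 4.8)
The plan is to upgrade Verdier's bijection recalled above from thick subcategories to admissible ones. Since every right admissible subcategory is thick, and the excerpt already records the bijection $\mathcal{A}\mapsto Q\mathcal{A}$ between thick subcategories $\mathcal{A}$ with $\mathcal{U}\subseteq\mathcal{A}\subseteq\mathcal{D}$ and thick subcategories of $\mathcal{D}/\mathcal{U}$, it suffices to prove: for such an $\mathcal{A}$, the subcategory $\mathcal{A}$ is right admissible in $\mathcal{D}$ if and only if $Q\mathcal{A}$ is right admissible in $\mathcal{D}/\mathcal{U}$. The left admissible case is then obtained by the evident dualization (replacing the SOD $(\mathcal{U}^{\bot},\mathcal{U})$ by $(\mathcal{U},{}^{\bot}\mathcal{U})$ and $\mathcal{U}^{\bot}$ by ${}^{\bot}\mathcal{U}$ throughout), so I only treat the right case.

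First I would make the quotient explicit. As $\mathcal{U}$ is right admissible, $(\mathcal{U}^{\bot},\mathcal{U})$ is a SOD of $\mathcal{D}$ by Lemma \ref{right addmiss and sod}, the composite $\mathcal{U}^{\bot}\hookrightarrow\mathcal{D}\xrightarrow{Q}\mathcal{D}/\mathcal{U}$ is an equivalence (cf. \cite[Prop. 1.6]{bonk}), and under the identification $\mathcal{D}/\mathcal{U}\simeq\mathcal{U}^{\bot}$ the functor $Q$ becomes $X\mapsto v_X$, where $u_X\to X\to v_X\to u_X[1]$ is the decomposition triangle with $u_X\in\mathcal{U}$ and $v_X\in\mathcal{U}^{\bot}$. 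If $\mathcal{U}\subseteq\mathcal{A}$ and $X\in\mathcal{A}$, then $u_X\in\mathcal{U}\subseteq\mathcal{A}$ forces $v_X\in\mathcal{A}$, and conversely $v_W\cong W$ whenever $W\in\mathcal{A}\cap\mathcal{U}^{\bot}$; hence under $\mathcal{D}/\mathcal{U}\simeq\mathcal{U}^{\bot}$ the subcategory $Q\mathcal{A}$ corresponds precisely to the triangulated subcategory $\mathcal{A}\cap\mathcal{U}^{\bot}$. Restricting the same decomposition triangle to objects of $\mathcal{A}$ also shows that $\mathcal{U}$ is right admissible in $\mathcal{A}$, and (by the argument used in the proof of Proposition \ref{finit t-stab and finite right admiss}) that $\mathcal{A}=\langle\mathcal{A}\cap\mathcal{U}^{\bot},\mathcal{U}\rangle$ with $(\mathcal{A}\cap\mathcal{U}^{\bot},\mathcal{U})$ a SOD of $\mathcal{A}$.

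Now for the two implications I would pass through three-term SODs of $\mathcal{D}$. If $\mathcal{A}$ is right admissible, then $(\mathcal{A}^{\bot},\mathcal{A})$ is a SOD of $\mathcal{D}$; inserting into its second slot the SOD $(\mathcal{A}\cap\mathcal{U}^{\bot},\mathcal{U})$ of $\mathcal{A}$ (an instance of Proposition \ref{local finite refinement construction} via the correspondence of Proposition \ref{t-stab and sod}) produces a SOD $(\mathcal{A}^{\bot},\mathcal{A}\cap\mathcal{U}^{\bot},\mathcal{U})$ of $\mathcal{D}$. Its coarsening $(\langle\mathcal{A}^{\bot},\mathcal{A}\cap\mathcal{U}^{\bot}\rangle,\mathcal{U})$ is again a SOD of $\mathcal{D}$; comparing with $(\mathcal{U}^{\bot},\mathcal{U})$ and using uniqueness of the complementary factor of a SOD gives $\langle\mathcal{A}^{\bot},\mathcal{A}\cap\mathcal{U}^{\bot}\rangle=\mathcal{U}^{\bot}$, so that $(\mathcal{A}^{\bot},\mathcal{A}\cap\mathcal{U}^{\bot})$ is a SOD of $\mathcal{U}^{\bot}$ and hence $\mathcal{A}\cap\mathcal{U}^{\bot}\simeq Q\mathcal{A}$ is right admissible in $\mathcal{U}^{\bot}\simeq\mathcal{D}/\mathcal{U}$ by Lemma \ref{right addmiss and sod}. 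Conversely, if $Q\mathcal{A}\simeq\mathcal{A}\cap\mathcal{U}^{\bot}$ is right admissible in $\mathcal{U}^{\bot}$, write $(\mathcal{B},\mathcal{A}\cap\mathcal{U}^{\bot})$ for the resulting SOD of $\mathcal{U}^{\bot}$ (Lemma \ref{right addmiss and sod}); inserting it into the first slot of the SOD $(\mathcal{U}^{\bot},\mathcal{U})$ of $\mathcal{D}$ yields a SOD $(\mathcal{B},\mathcal{A}\cap\mathcal{U}^{\bot},\mathcal{U})$ of $\mathcal{D}$, and coarsening its last two terms gives the SOD $(\mathcal{B},\langle\mathcal{A}\cap\mathcal{U}^{\bot},\mathcal{U}\rangle)=(\mathcal{B},\mathcal{A})$ of $\mathcal{D}$; thus $\mathcal{A}$ is right admissible by Lemma \ref{right addmiss and sod}. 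Finally the strict-inclusion bookkeeping ($\mathcal{U}\subsetneq\mathcal{A}\subsetneq\mathcal{D}$ if and only if $0\subsetneq Q\mathcal{A}\subsetneq\mathcal{D}/\mathcal{U}$) is immediate from Verdier's bijection.

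I expect the first technical step — pinning down $Q\mathcal{A}$ inside the concrete model $\mathcal{D}/\mathcal{U}\simeq\mathcal{U}^{\bot}$ and verifying that $\mathcal{U}$ remains right admissible in $\mathcal{A}$ — to be the main obstacle; once this is in place, both implications reduce to gluing and coarsening SODs with results already available in the excerpt. A minor point requiring care is to apply the refinement of Proposition \ref{local finite refinement construction} to a single semistable slot (refining one piece while leaving the others with their trivial one-element t-stability), and to the correct slot on each side.
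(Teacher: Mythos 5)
Your argument is correct, and it reaches the statement by a genuinely different route than the paper. The paper stays inside the Verdier quotient: it verifies the hypotheses of J{\o}rgensen--Kato's Theorem C for the pair $(\mathcal{A},\langle\mathcal{A}^{\bot},\mathcal{U}\rangle)$, namely $\mathcal{A}\cap\langle\mathcal{A}^{\bot},\mathcal{U}\rangle=\mathcal{U}$ (via uniqueness of decomposition triangles for the SOD $(\mathcal{U}^{\bot},\mathcal{U})$) and $\mathcal{A}*\langle\mathcal{A}^{\bot},\mathcal{U}\rangle=\mathcal{D}$, and then reads off that $(Q\mathcal{A},Q\langle\mathcal{A}^{\bot},\mathcal{U}\rangle)$ is a stable $t$-structure, hence $Q\mathcal{A}$ is right admissible; this has the side benefit of producing the formula $(Q\mathcal{A})^{\bot}=Q\langle\mathcal{A}^{\bot},\mathcal{U}\rangle$, which the paper reuses in the proof of Proposition \ref{admiss filt bijection}. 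You instead transport everything along the equivalence $\mathcal{D}/\mathcal{U}\simeq\mathcal{U}^{\bot}$, identify $Q\mathcal{A}$ with $\mathcal{A}\cap\mathcal{U}^{\bot}$, and reduce both implications to elementary gluing and coarsening of two- and three-term SODs inside $\mathcal{D}$ (using $\mathcal{A}=\langle\mathcal{A}\cap\mathcal{U}^{\bot},\mathcal{U}\rangle$ and $\langle\mathcal{A}^{\bot},\mathcal{A}\cap\mathcal{U}^{\bot}\rangle=\mathcal{U}^{\bot}$). What your approach buys is independence from the stable-$t$-structure correspondence of \cite{joka} and, more importantly, an explicit proof of the converse implication — that right admissibility of $Q\mathcal{A}$ forces right admissibility of $\mathcal{A}$ — which is what gives surjectivity of $\mathcal{A}\mapsto Q\mathcal{A}$; the paper's written proof only argues the forward direction and leaves surjectivity implicit. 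What the paper's route buys is that it works directly with the quotient functor and yields the orthogonal-complement formula needed later in the same form; in your setup that formula is still recoverable from the SOD $(\mathcal{A}^{\bot},\mathcal{A}\cap\mathcal{U}^{\bot})$ of $\mathcal{U}^{\bot}$, but you would have to translate it back through the equivalence when it is used downstream.
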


\begin{proof}
We prove the right admissible case; the left admissible case is analogous.
For a right admissible subcategory $\mathcal{A}$ of $\mathcal{D}$ with $\mathcal{U} \subsetneq \mathcal{A} \subsetneq \mathcal{D}$, it suffices to show that $Q\mathcal{A}$ is right admissible.

We first show that $\mathcal{A}\cap\langle\mathcal{A}^{\bot},\mathcal{U}\rangle
=\mathcal{U}$.
Indeed, the right inclusion ``$\supseteq$'' is clear.
For the left inclusion ``$\subseteq$'', let $0\neq Z\in\mathcal{A}\cap
\langle\mathcal{A}^{\bot},\mathcal{U}\rangle$.
Note that the right admissibility of $\mathcal{U}$ in $\mathcal{D}$ implies  its admissibility in both $\mathcal{A}$ and $\langle\mathcal{A}^{\bot},\mathcal{U}\rangle$, and that $(\mathcal{A}^{\bot},\mathcal{U})$ is a SOD of the latter category.
This gives two triangles
$$U_1\rightarrow Z\rightarrow V_1\rightarrow U_1[1]\quad\text{and} \quad
U_2\rightarrow Z\rightarrow V_2\rightarrow U_2[1]$$
with $U_1\in\mathcal{U},V_1\in\mathcal{U}^{\bot}\cap\mathcal{A}
\subseteq\mathcal{U}^{\bot}$ and $U_2\in\mathcal{U},V_2\in\mathcal{A}^{\bot}\subseteq\mathcal{U}^{\bot}$.
Since the decomposition triangle for $Z$ under the SOD $(\mathcal{U}^{\bot},\mathcal{U})$ is unique, it follows that $U_1\cong U_2$ and $V_1\cong V_2$.
The fact $\mathcal{A}\cap\mathcal{A}^{\bot}=0$ implies that $V_1\cong V_2=0$ and hence $Z\in\mathcal{U}$.
This proves the left inclusion.

Moreover, we have $\mathcal{A}*\langle\mathcal{A}^{\bot},\mathcal{U}\rangle
=\mathcal{A}*\mathcal{A}^{\bot}=\mathcal{D}$.
Consequently, by \cite[Thm. C]{joka}, $(Q\mathcal{A},Q\langle\mathcal{A}^{\bot},\mathcal{U}\rangle)$
is a stable $t$-structure in $\mathcal{D} / \mathcal{U}$, implying that  $Q\mathcal{A}$ is right admissible.
\end{proof}

Recall from \eqref{admissble subcats seq} that a finite admissible filtration $\mathscr{T}$ has the form
$0=\mathcal{T}_0\subsetneq\mathcal{T}_1\subsetneq\cdots\subsetneq\mathcal{T}_{n-1}
\subsetneq\mathcal{T}_{n}=\mathcal{D}$.

\begin{prop}  \label{admiss filt bijection}
For an admissible $\mathcal{U} \subsetneq \mathcal{D}$ with quotient $Q: \mathcal{D} \to \mathcal{D}/\mathcal{U}$, there is a bijection
\[
  \bigg\{\begin{array}{l}
      \mbox{finite finest $\infty$-s-admissible filtrations $\mathscr{T}$ of $\mathcal{D}$} \\[2mm]
      \mbox{with $\mathcal{U}\subsetneq\mathcal{T}_1\subsetneq\mathcal{D}$}
    \end{array}\bigg\}
  \stackrel{1:1}{\longleftrightarrow}
  \bigg\{\begin{array}{l}
  \mbox{finite finest $\infty$-s-admissible filtrations  $\mathcal{D}/\mathcal{U}$}
  \end{array}\bigg\}
\]
{\em taking $\mathscr{T} \mapsto Q\mathscr{T}$, which is compatible with mutations. }
\end{prop}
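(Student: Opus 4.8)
\emph{Approach.} The plan is to obtain the bijection directly from the Verdier correspondence recalled just before Lemma~\ref{admiss sub bijection}, to upgrade the admissibility statements using Lemma~\ref{admiss sub bijection}, and to transport mutation-compatibility through the dictionary $\xi$ of Propositions~\ref{finit t-stab and finite right admiss} and~\ref{mutaion preserve finest}. Concretely, to a filtration $\mathscr{T}:0=\mathcal{T}_0\subsetneq\mathcal{T}_1\subsetneq\cdots\subsetneq\mathcal{T}_n=\mathcal{D}$ with $\mathcal{U}\subsetneq\mathcal{T}_1\subsetneq\mathcal{D}$ I associate $Q\mathscr{T}:0=Q\mathcal{T}_0\subsetneq Q\mathcal{T}_1\subsetneq\cdots\subsetneq Q\mathcal{T}_n=\mathcal{D}/\mathcal{U}$, and the candidate inverse pulls a filtration of $\mathcal{D}/\mathcal{U}$ back term by term through the Verdier bijection (setting the bottom term to $0$).

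\emph{Set-level bijection.} Every $\mathcal{T}_i$ with $i\ge1$ contains $\mathcal{U}$, so \cite[Prop. II.2.3.1]{ver} applies to all these terms; being a bijection on triangulated subcategories containing $\mathcal{U}$ which sends strict inclusions to strict inclusions, it shows that $Q\mathscr{T}$ is again a strictly increasing chain of the same length $n$, with $0\subsetneq Q\mathcal{T}_1$ strict precisely because $\mathcal{U}\subsetneq\mathcal{T}_1$. Since $\mathcal{T}_1\subsetneq\mathcal{D}$ forces $n\ge2$, $Q\mathscr{T}$ is nontrivial; conversely a nontrivial finite filtration of $\mathcal{D}/\mathcal{U}$ pulls back to one whose first nonzero term strictly contains $\mathcal{U}$ and is strictly contained in $\mathcal{D}$, and the two assignments are mutually inverse because the Verdier correspondence is.

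\emph{Preservation of the class.} I would then check that $Q\mathscr{T}$ is finite finest $\infty$-s-admissible iff $\mathscr{T}$ is. For ordinary admissibility, using the canonical equivalence $\mathcal{T}_{i+1}/\mathcal{U}\simeq Q\mathcal{T}_{i+1}$ and applying Lemma~\ref{admiss sub bijection} to $\mathcal{T}_{i+1}\to\mathcal{T}_{i+1}/\mathcal{U}$ gives: $\mathcal{T}_i$ is right admissible in $\mathcal{T}_{i+1}$ iff $Q\mathcal{T}_i$ is right admissible in $Q\mathcal{T}_{i+1}$. For s-admissibility, since $\mathcal{U}\subseteq\mathcal{T}_i$ the SOD $(\mathcal{T}_i^{\bot}\cap\mathcal{T}_{i+1},\mathcal{T}_i)$ of $\mathcal{T}_{i+1}$ (Lemma~\ref{right addmiss and sod}(2)) descends along $Q$ to an SOD of $Q\mathcal{T}_{i+1}$ whose first term is canonically $Q(\mathcal{T}_i^{\bot}\cap\mathcal{T}_{i+1})=(Q\mathcal{T}_i)^{\bot}\cap Q\mathcal{T}_{i+1}$, and admissibility of this subcategory in $\mathcal{D}$ versus in $\mathcal{D}/\mathcal{U}$ is again matched by Lemma~\ref{admiss sub bijection}. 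The $\infty$-version follows by iterating together with the mutation-compatibility below; and if $\mathcal{D}$ carries a Serre functor, Lemma~\ref{the three sets are equal under Serre} lets one replace ``$\infty$-s-admissible'' by ``right admissible'' everywhere, so that this step is just a termwise application of Lemma~\ref{admiss sub bijection}. For \emph{finest}-ness one observes that, by the Verdier bijection, refinements of $\mathscr{T}$ which keep the first term strictly above $\mathcal{U}$ correspond bijectively to refinements of $Q\mathscr{T}$ (an insertion between $\mathcal{T}_i$ and $\mathcal{T}_{i+1}$ for $i\ge1$ automatically contains $\mathcal{U}$, and an insertion $\mathcal{U}\subsetneq\mathcal{V}\subsetneq\mathcal{T}_1$ corresponds to $0\subsetneq Q\mathcal{V}\subsetneq Q\mathcal{T}_1$); hence $\mathscr{T}$ admits no such finer filtration iff $Q\mathscr{T}$ is finest.

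\emph{Mutation-compatibility and the main obstacle.} For $2\le j\le n-1$ the right mutation $\sigma_j$ leaves $\mathcal{T}_1$ unchanged, hence preserves the subset; it replaces $\mathcal{T}_j$ by $\langle\mathcal{T}_j^{\bot}\cap\mathcal{T}_{j+1},\mathcal{T}_{j-1}\rangle$, and applying $Q$ together with $Q\langle\mathcal{A},\mathcal{B}\rangle=\langle Q\mathcal{A},Q\mathcal{B}\rangle$ (valid since $\mathcal{T}_{j-1}\supseteq\mathcal{T}_1\supseteq\mathcal{U}$) and the identity $Q(\mathcal{T}_j^{\bot}\cap\mathcal{T}_{j+1})=(Q\mathcal{T}_j)^{\bot}\cap Q\mathcal{T}_{j+1}$ from the previous paragraph yields $Q\sigma_j\mathscr{T}=\sigma_jQ\mathscr{T}$; the left case is symmetric, and one may also route this through $\xi$ using $\xi\rho_i=\sigma_{n-i}\xi$ from Proposition~\ref{mutaion preserve finest}. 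The main obstacle I anticipate is the handling of the bottom of the filtration and the word ``finest'': the term $\mathcal{T}_1$ always contains the admissible subcategory $\mathcal{U}$ and is therefore never absolutely indecomposable, so ``finest'' on the $\mathcal{D}$-side must be read relative to the constraint $\mathcal{U}\subsetneq\mathcal{T}_1\subsetneq\mathcal{D}$ (equivalently: relative to those mutations/refinements that preserve the subset, since $\sigma_1$ — corresponding to $\rho_{n-1}$ — generally moves $\mathscr{T}$ out of it), and one has to verify carefully that this relative notion is exactly what matches absolute finest-ness in $\mathcal{D}/\mathcal{U}$, as well as that the ancillary facts used — $\mathcal{A}/\mathcal{U}\simeq Q\mathcal{A}$ as subcategories and the descent of SODs along $Q$ — hold at the needed level of generality (thick, resp. admissible, $\mathcal{U}$, with no Serre functor assumed), which is where \cite{ver} and \cite{joka} should be invoked precisely.
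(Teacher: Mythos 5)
Your proposal is correct and follows essentially the same route as the paper's own proof: the termwise set-level correspondence via \cite[Prop. II.2.3.1]{ver}, transfer of (s-)admissibility through Lemma \ref{admiss sub bijection}, and the computation $Q\sigma_i\mathscr{T}=\sigma_i Q\mathscr{T}$ for mutation-compatibility. The one point to make explicit is that Lemma \ref{admiss sub bijection} only covers subcategories containing $\mathcal{U}$, so the admissibility transfer has to pass through the saturation $\langle\mathcal{T}_i^{\bot}\cap\mathcal{T}_{i+1},\mathcal{U}\rangle$ together with the identity $(Q\mathcal{T}_i)^{\bot}\cap Q\mathcal{T}_{i+1}=Q\langle\mathcal{T}_i^{\bot}\cap\mathcal{T}_{i+1},\mathcal{U}\rangle$ (which the paper extracts from \cite[Thm. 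C and Lem. 2.4]{joka}) rather than being applied to $\mathcal{T}_i^{\bot}\cap\mathcal{T}_{i+1}$ directly; your remark that ``finest'' must be read relative to the constraint $\mathcal{U}\subsetneq\mathcal{T}_1$ is a needed clarification that the paper's proof leaves implicit.
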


\begin{proof}
Let $\mathscr{T}:\;
0=\mathcal{T}_0\subsetneq\mathcal{T}_1\subsetneq\cdots\subsetneq\mathcal{T}_{n-1}
\subsetneq\mathcal{T}_{n}=\mathcal{D}$ be a finite  $\infty$-s-admissible filtration in $\mathcal{D}$.
We first show that $Q\mathscr{T}$ is a finite s-admissible s-filtration.
It suffices to show that each $(Q\mathcal{T}_i)^{\bot}\cap Q\mathcal{T}_{i+1}$ is admissible in $\mathcal{D}/\mathcal{U}$.
The stable t-struture $(Q\mathcal{T}_i,Q\langle\mathcal{T}^{\bot}_i,\mathcal{U}\rangle)$ ensures that $(Q\mathcal{T}_i)^{\bot}=
Q\langle\mathcal{T}^{\bot}_i,\mathcal{U}\rangle$.
By \cite[Lem. 2.4 (i.a)]{joka},
$$
(Q\mathcal{T}_i)^{\bot}\cap Q\mathcal{T}_{i+1}=
Q\langle\mathcal{T}^{\bot}_i,\mathcal{U}\rangle\cap Q\mathcal{T}_{i+1}=
Q(\langle\mathcal{T}^{\bot}_i,\mathcal{U}\rangle
\cap\mathcal{T}_{i+1})=
Q(\langle\mathcal{T}^{\bot}_i\cap\mathcal{T}_{i+1},
\mathcal{U}\rangle).
$$
Thus, by Lemma \ref{admiss sub bijection},  the admissibility of $\langle\mathcal{T}^{\bot}_i\cap\mathcal{T}_{i+1},
\mathcal{U}\rangle$ implies that of $(Q\mathcal{T}_i)^{\bot}\cap Q\mathcal{T}_{i+1}$.
This shows the desired filtration.

Now, applying the right mutation to $\mathscr{T}$ at $\mathcal{T}_{i}$ gives a finite  filtration
$
\mathscr{T}':\;
0=\mathcal{T}'_0\subsetneq\mathcal{T}'_1\subsetneq\cdots
\subsetneq\mathcal{T}'_{n-1}
\subsetneq\mathcal{T}'_{n}=\mathcal{D},
\;\,\text{where}\;\, \mathcal{T}'_{i}=\langle
\mathcal{T}^{\bot}_{i}\cap\mathcal{T}_{i+1},
\mathcal{T}_{i-1}\rangle,
$
and $\mathcal{T}'_{j}=\mathcal{T}_{j}$ for  $j\in\Phi_n\setminus\{i\}$.

On the other hand, applying the right mutation to $Q\mathscr{T}$ at $Q\mathcal{T}_{i}$ yields a finite  filtration
$(Q\mathscr{T})':\;
0=(Q\mathcal{T})'_0\subsetneq (Q\mathcal{T})'_1\subsetneq\cdots
\subsetneq (Q\mathcal{T})'_{n-1}
\subsetneq (Q\mathcal{T})'_{n}=\mathcal{D}$,
where
$(Q\mathcal{T})'_{i}=\langle
(Q\mathcal{T}_{i})^{\bot}\cap Q\mathcal{T}_{i+1},
Q\mathcal{T}_{i-1}\rangle$,
and $(Q\mathcal{T})'_{j}=Q\mathcal{T}_{j}$ for  $j\in\Phi_n\setminus\{i\}$.
Since $(Q\mathcal{T}_i)^{\bot}\cap Q\mathcal{T}_{i+1}=
Q(\langle\mathcal{T}^{\bot}_i\cap\mathcal{T}_{i+1},
\mathcal{U}\rangle)$, it follows that
$$Q\mathcal{T}'_{i}=Q\langle
\mathcal{T}^{\bot}_{i}\cap\mathcal{T}_{i+1},
\mathcal{T}_{i-1}\rangle=\langle
(Q\mathcal{T}_{i})^{\bot}\cap Q\mathcal{T}_{i+1},
Q\mathcal{T}_{i-1}\rangle=(Q\mathcal{T})'_{i}.$$
This gives that $Q\sigma_i\mathscr{T}=\sigma_iQ\mathscr{T}$. Analogously, one can show that $Q\check{\sigma}_i\mathscr{T}=\check{\sigma}_iQ\mathscr{T}$. Thus we obtain the order-preserving bijection.
\end{proof}

The following result shows a correspondence for the reduction of SODs.

\begin{prop} \label{reduction thm}
Keep notation as in Proposition \ref{admiss filt bijection}. There is a bijection
\[
  \bigg\{
    \begin{array}{l}
      \mbox{finest $\infty$-admissible SODs $(\Pi_{i};i\in\Phi_n)$ of  $\mathcal{D}$} \\[2mm]
      \mbox{with $\mathcal{U}\subseteq\Pi_{n}\subsetneq\mathcal{D}$}
    \end{array}
  \bigg\}
  \stackrel{1:1}{\longleftrightarrow}
  \bigg\{ \begin{array}{l}
    \mbox{finest $\infty$-admissible SODs of $\mathcal{D}/\mathcal{U}$}
     \end{array}
  \bigg\}
\]
{\em taking $(\Pi_{i};i\in\Phi_n) \mapsto (Q\langle\Pi_{i},\mathcal{U}\rangle;i\in\Phi_n\; \text{or}\;i\in\Phi_{n-1})$, which is compatible with mutations. }
\end{prop}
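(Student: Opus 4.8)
The plan is to derive the statement by composing the two mutation-compatible bijections established immediately above: the bijection $\xi$ between finest $\infty$-admissible SODs of $\mathcal{D}$ and finite finest $\infty$-s-admissible filtrations in $\mathcal{D}$ (Proposition \ref{mutaion preserve finest}), and the reduction $\mathscr{T}\mapsto Q\mathscr{T}$ on such filtrations (Proposition \ref{admiss filt bijection}). Concretely, I would set $\Theta=\xi^{-1}\circ(\mathscr{T}\mapsto Q\mathscr{T})\circ\xi$, show that $\Theta$ agrees with the map written in the statement, and then read off bijectivity and mutation-compatibility from those of the three constituent maps.

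For the first step, take a finest $\infty$-admissible SOD $(\Pi_i;i\in\Phi_n)$ of $\mathcal{D}$ with $\mathcal{U}\subseteq\Pi_n\subsetneq\mathcal{D}$ and apply $\xi$, obtaining the filtration $\mathscr{T}$ with $\mathcal{T}_k=\Pi_{\geq n-k+1}$, so that $\mathcal{T}_1=\Pi_n$ and $\mathcal{T}_n=\mathcal{D}$. I would split into two cases. If $\mathcal{U}\subsetneq\Pi_n$, then $\mathcal{U}\subsetneq\mathcal{T}_1\subsetneq\mathcal{D}$, so Proposition \ref{admiss filt bijection} applies verbatim and $Q\mathscr{T}$ is a finite finest $\infty$-s-admissible filtration of $\mathcal{D}/\mathcal{U}$ of length $n$ with $Q\mathcal{T}_1=Q\Pi_n\neq 0$. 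If $\mathcal{U}=\Pi_n$, then $Q\mathcal{T}_1=0$ and $Q\mathscr{T}$ collapses to the length $(n-1)$ filtration $0\subsetneq Q\mathcal{T}_2\subsetneq\cdots\subsetneq Q\mathcal{T}_n=\mathcal{D}/\mathcal{U}$; here I would rerun the argument of Proposition \ref{admiss filt bijection} on the terms $\mathcal{T}_2,\dots,\mathcal{T}_n$, all of which are right admissible and strictly contain $\mathcal{U}$, to see that this collapsed filtration is again finite finest $\infty$-s-admissible.

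For the second step, apply $\xi^{-1}$ (in the form of the inverse map constructed in Proposition \ref{finit t-stab and finite right admiss}) and unravel. Using $(Q\mathcal{T}_j)^{\bot}=Q\langle\mathcal{T}_j^{\bot},\mathcal{U}\rangle$ and $\langle\mathcal{T}_j^{\bot},\mathcal{U}\rangle\cap\mathcal{T}_{j+1}=\langle\mathcal{T}_j^{\bot}\cap\mathcal{T}_{j+1},\mathcal{U}\rangle$ from the proof of Proposition \ref{admiss filt bijection} (which rest on \cite[Lem. 2.4 (i.a)]{joka}), together with the SOD identity $\mathcal{T}_{n-i}^{\bot}\cap\mathcal{T}_{n-i+1}=\Pi_{\geq i+1}^{\bot}\cap\Pi_{\geq i}=\Pi_i$ and $\langle\Pi_n,\mathcal{U}\rangle=\Pi_n$, one computes that the $i$-th term of $\Theta(\Pi_i;i\in\Phi_n)$ is precisely $Q\langle\Pi_i,\mathcal{U}\rangle$ — for $i\in\Phi_n$ when $\mathcal{U}\subsetneq\Pi_n$, and for $i\in\Phi_{n-1}$ when $\mathcal{U}=\Pi_n$. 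This identifies $\Theta$ with the map in the statement. Bijectivity is then inherited from the three steps; in the degenerate case $\mathcal{U}=\Pi_n$ the inverse is also seen directly by pulling a finest $\infty$-admissible SOD of $\mathcal{D}/\mathcal{U}$ back along the equivalence $Q|_{\mathcal{U}^{\bot}}\colon\mathcal{U}^{\bot}\xrightarrow{\ \sim\ }\mathcal{D}/\mathcal{U}$ (\cite[Prop. 1.6]{bonk}) and appending $\mathcal{U}$ as the last piece, which is finest since all of its pieces are indecomposable.

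Finally, mutation-compatibility follows by chaining $\xi\rho_i=\sigma_{n-i}\xi$ (Proposition \ref{mutaion preserve finest}) with $Q\sigma_j=\sigma_jQ$ and $Q\check\sigma_j=\check\sigma_jQ$ (Proposition \ref{admiss filt bijection}): one gets $\Theta\rho_i=\rho_{i'}\Theta$ and $\Theta\check\rho_i=\check\rho_{i'}\Theta$, where $i'$ is dictated by the reindexing $j=n-i$ and the possible shift of length by one, with the proviso that in the case $\mathcal{U}=\Pi_n$ one mutates only at positions not involving the last piece $\mathcal{U}$ (equivalently, at positions surviving under the collapse of $Q\mathscr{T}$). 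The only real difficulty I foresee is bookkeeping: confirming that the collapse of $Q\mathscr{T}$ in the case $\Pi_n=\mathcal{U}$ does not leave the class of finite finest $\infty$-s-admissible filtrations, and carrying the index shift $i\leftrightarrow n-i$ (and the length shift by one) consistently through the compatibility statements on both sides.
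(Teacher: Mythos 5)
Your proposal follows essentially the same route as the paper's proof: it composes the SOD--filtration bijection of Proposition \ref{mutaion preserve finest} with the filtration reduction of Proposition \ref{admiss filt bijection}, splits into the cases $\mathcal{U}\subsetneq\Pi_n$ and $\mathcal{U}=\Pi_n$, computes the quotient pieces as $Q\langle\Pi_i,\mathcal{U}\rangle$ via $(Q\mathcal{T}_j)^{\bot}\cap Q\mathcal{T}_{j+1}=Q\langle\mathcal{T}_j^{\bot}\cap\mathcal{T}_{j+1},\mathcal{U}\rangle$, and inherits mutation compatibility from the constituent bijections, exactly as in the paper. Your handling of the degenerate case $\Pi_n=\mathcal{U}$ (collapsing $Q\mathscr{T}$ and inverting via the equivalence $\mathcal{U}^{\bot}\simeq\mathcal{D}/\mathcal{U}$) is in fact slightly more explicit than the paper's brief treatment, so this is fine.
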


\begin{proof}
Let $(\Pi_{i};i\in\Phi_n)$ be a finest $\infty$-admissible SOD of $\mathcal{D}$ such that $\mathcal{U}\subseteq\Pi_{n}\subsetneq\mathcal{D}$.
If $\Pi_{n}\supsetneq\mathcal{U}$, then
by Proposition \ref{mutaion preserve finest}, each such SOD $(\Pi_{i};i\in\Phi_n)$
determines a unique finite finest $\infty$-s-admissible filtration $\mathscr{T}:\;
0=\mathcal{T}_0\subsetneq\mathcal{T}_1\subsetneq\cdots\subsetneq\mathcal{T}_{n-1}
\subsetneq\mathcal{T}_{n}=\mathcal{D}$,
where
$\mathcal{T}_{i}=\Pi_{\geq n-i+1}$ and $\mathcal{T}_{1}\supsetneq\mathcal{U}$.
By Proposition \ref{admiss filt bijection}, $\mathscr{T}$ corresponds uniquely to $Q\mathscr{T}$,
both of which are finite finest $\infty$-admissible. Applying Proposition \ref{mutaion preserve finest}
again, $Q\mathscr{T}$ gives rise to a unique finest $\infty$-admissible SOD
$(P_{i};i\in\Phi_n)$, where $P_{n}=Q\mathcal{T}_1=Q\Pi_{n}$ and
\begin{equation*}
P_{i}=(Q\mathcal{T}_{n-i})^{\bot}\cap Q\mathcal{T}_{n-i+1}=Q\langle\Pi^{\bot}_{\geq i+1}\cap\Pi_{\geq i},\mathcal{U}\rangle=
Q\langle\Pi_{i},\mathcal{U}\rangle\;\;
\text{for}\;\;1\leq i\leq n-1.
\end{equation*}

Otherwise, we have that $\Pi_{n}=\mathcal{U}$. Then, each such SOD $(\Pi_{i};i\in\Phi_n)$ corresponds uniquely to a finest $\infty$-admissible SOD $(Q\langle\Pi_{i},\mathcal{U}\rangle;i\in\Phi_{n-1})$ with $\Pi_{i}\not\supseteq\mathcal{U}$ for all $1\leq i\leq n-1$.
This establishes the desired bijection.
An analogous verification to Proposition \ref{admiss filt bijection} shows the compatibility of mutations.
\end{proof}

An $\infty$-admissible  subcategory $\mathcal{A}$ in $\mathcal{D}$  is called
\emph{finest} if there  is a finest $\infty$-admissible SOD $(\Pi_{i};i\in\Phi_n)$
such that $\Pi_{i}=\mathcal{A}$ for some $i$.
Let $\Sigma(\mathcal{D})$ denote the set of all finest $\infty$-admissible  SODs of $\mathcal{D}$, and $\mathscr{A}(\mathcal{D})$ the set of finest
$\infty$-admissible  subcategories in $\mathcal{D}$.
This gives the following corollary.

\begin{cor} \label{embedding and decomposition}
There is a decomposition of $\Sigma(\mathcal{D})$ given by
$$\Sigma(\mathcal{D})=
\mathop{\dot{\bigcup}}\limits_{\mathcal{U}\in\mathscr{A}(\mathcal{D})}^{}
Q_{\mathcal{U}}^{-1}(\Sigma(\mathcal{D}/ \mathcal{U})).$$
\end{cor}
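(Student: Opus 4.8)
The plan is to prove Corollary~\ref{embedding and decomposition} directly from Proposition~\ref{reduction thm}, treating it as a disjoint union statement that requires two things: that the union covers $\Sigma(\mathcal{D})$, and that the sets $Q_{\mathcal{U}}^{-1}(\Sigma(\mathcal{D}/\mathcal{U}))$ for distinct $\mathcal{U}\in\mathscr{A}(\mathcal{D})$ are pairwise disjoint. First I would fix notation: for $\mathcal{U}\in\mathscr{A}(\mathcal{D})$, write $\Sigma_{\mathcal{U}}(\mathcal{D})$ for the set of finest $\infty$-admissible SODs $(\Pi_i;i\in\Phi_n)$ with $\mathcal{U}\subseteq\Pi_n\subsetneq\mathcal{D}$ and $\Pi_i\not\supseteq\mathcal{U}$ for $i<n$ in the case $\Pi_n=\mathcal{U}$ — more cleanly, the subset on which the map of Proposition~\ref{reduction thm} is defined, so that $Q_{\mathcal{U}}$ restricts to a bijection $\Sigma_{\mathcal{U}}(\mathcal{D})\xrightarrow{\sim}\Sigma(\mathcal{D}/\mathcal{U})$, and hence $Q_{\mathcal{U}}^{-1}(\Sigma(\mathcal{D}/\mathcal{U}))=\Sigma_{\mathcal{U}}(\mathcal{D})$.

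For the covering, I would take an arbitrary finest $\infty$-admissible SOD $\mathfrak{S}=(\Pi_i;i\in\Phi_n)$ of $\mathcal{D}$ and exhibit a $\mathcal{U}\in\mathscr{A}(\mathcal{D})$ with $\mathfrak{S}\in\Sigma_{\mathcal{U}}(\mathcal{D})$. The natural choice is $\mathcal{U}=\Pi_n$ itself: by Lemma~\ref{admissible sucat and sod gen by excep seq} (or directly since $\mathfrak{S}$ is $\infty$-admissible) $\Pi_n$ is an $\infty$-admissible subcategory, and it is \emph{finest} in the sense defined just before the corollary because it occurs as a term of the finest SOD $\mathfrak{S}$. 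Then $\mathcal{U}=\Pi_n\subseteq\Pi_n\subsetneq\mathcal{D}$ holds trivially (the strictness $\Pi_n\subsetneq\mathcal{D}$ being part of non-triviality of the SOD), so $\mathfrak{S}$ lies in the domain of the Proposition~\ref{reduction thm} bijection for this $\mathcal{U}$, i.e.\ $\mathfrak{S}\in Q_{\Pi_n}^{-1}(\Sigma(\mathcal{D}/\Pi_n))$. This shows $\Sigma(\mathcal{D})\subseteq\bigcup_{\mathcal{U}}Q_{\mathcal{U}}^{-1}(\Sigma(\mathcal{D}/\mathcal{U}))$; the reverse inclusion is immediate since each $Q_{\mathcal{U}}^{-1}(\Sigma(\mathcal{D}/\mathcal{U}))\subseteq\Sigma(\mathcal{D})$ by construction.

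For disjointness, suppose $\mathfrak{S}=(\Pi_i;i\in\Phi_n)$ lies in both $\Sigma_{\mathcal{U}}(\mathcal{D})$ and $\Sigma_{\mathcal{U}'}(\mathcal{D})$ for $\mathcal{U},\mathcal{U}'\in\mathscr{A}(\mathcal{D})$. I would argue that the domain condition forces $\mathcal{U}=\Pi_n=\mathcal{U}'$. Indeed, re-examining the proof of Proposition~\ref{reduction thm}: when $\Pi_n\supsetneq\mathcal{U}$ the image SOD $(P_i;i\in\Phi_n)$ has $P_n=Q_{\mathcal{U}}\Pi_n\neq 0$, whereas when $\Pi_n=\mathcal{U}$ the image SOD is indexed by $\Phi_{n-1}$ and has $P_{n-1}=Q_{\mathcal{U}}\langle\Pi_{n-1},\mathcal{U}\rangle$ with $\Pi_{n-1}\not\supseteq\mathcal{U}$; in either branch the datum $\mathcal{U}$ is recovered from $\mathfrak{S}$ as $\Pi_n$ together with the extra requirement in the $\Pi_n=\mathcal{U}$ case. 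The cleanest way to record this is to observe that the domain $\Sigma_{\mathcal{U}}(\mathcal{D})$ consists precisely of those finest $\infty$-admissible SODs whose last term equals $\mathcal{U}$ \emph{or} strictly contains $\mathcal{U}$, but with the convention built into Proposition~\ref{reduction thm} that when it strictly contains $\mathcal{U}$ the SOD is not additionally required to have $\mathcal{U}$ as a term — yet for the union to be disjoint one must use the refined domain. The main obstacle, then, is precisely pinning down the correct ``disjointified'' version of the domains: as stated, Proposition~\ref{reduction thm} allows a single $\mathfrak{S}$ with $\Pi_n\supsetneq\mathcal{U}$ to appear for many $\mathcal{U}$. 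I expect the intended reading is that $Q_{\mathcal{U}}^{-1}(\Sigma(\mathcal{D}/\mathcal{U}))$ in the corollary means the \emph{fibre}, i.e.\ those SODs with $\Pi_n=\mathcal{U}$ exactly (equivalently, the $\mathcal{U}$-component is the one for which $\mathcal{U}$ is the smallest term, namely $\Pi_n$), so that the assignment $\mathfrak{S}\mapsto\Pi_n$ is a well-defined function $\Sigma(\mathcal{D})\to\mathscr{A}(\mathcal{D})$ whose fibres are exactly the sets in the union; disjointness and covering are then both instances of ``a function partitions its domain by fibres,'' and I would phrase the proof that way, invoking Proposition~\ref{reduction thm} only to identify each fibre with $\Sigma(\mathcal{D}/\mathcal{U})$ via $Q_{\mathcal{U}}$.
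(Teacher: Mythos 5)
Your covering argument is the intended one and is fine: every finest $\infty$-admissible SOD $(\Pi_i;i\in\Phi_n)$ has $\Pi_n\in\mathscr{A}(\mathcal{D})$ and lies in the component indexed by $\mathcal{U}=\Pi_n$. The problem is the disjointness half, which is where the actual content of the corollary sits, and which you did not prove. You correctly observed that, read as the domain of the bijection in Proposition \ref{reduction thm}, the set $Q_{\mathcal{U}}^{-1}(\Sigma(\mathcal{D}/\mathcal{U}))$ consists of all finest SODs with $\mathcal{U}\subseteq\Pi_n\subsetneq\mathcal{D}$, so that a single SOD could a priori belong to several components; but you then resolved this by declaring that the corollary ``must mean'' the fibre $\{\Pi_n=\mathcal{U}\}$. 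That is not a proof of the stated decomposition: it makes disjointness true by redefinition, and under your reading the identification of the component with $\Sigma(\mathcal{D}/\mathcal{U})$ via Proposition \ref{reduction thm} is no longer automatic either, because that bijection is defined on the a priori larger set $\{\mathcal{U}\subseteq\Pi_n\subsetneq\mathcal{D}\}$; restricting it to your fibre would only give an injection into $\Sigma(\mathcal{D}/\mathcal{U})$ unless the fibre exhausts the domain.

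The missing step, which closes the gap and shows that the two readings coincide (so no reinterpretation is needed), is the following: if $(\Pi_i;i\in\Phi_n)$ is a \emph{finest} $\infty$-admissible SOD and $\mathcal{U}\in\mathscr{A}(\mathcal{D})$ satisfies $\mathcal{U}\subseteq\Pi_n$, then $\mathcal{U}=\Pi_n$. Indeed, $\mathcal{U}$ is a nonzero admissible subcategory of $\mathcal{D}$ (it is a term of an admissible SOD), and the adjoints to the inclusion $\mathcal{U}\hookrightarrow\mathcal{D}$ restrict to $\Pi_n$, so $\mathcal{U}$ is admissible in $\Pi_n$; if $\mathcal{U}\subsetneq\Pi_n$ were proper, then $(\mathcal{U}^{\bot}\cap\Pi_n,\,\mathcal{U})$ would be a nontrivial SOD of $\Pi_n$, and the local refinement of Proposition \ref{local finite refinement construction} applied at the $n$-th slot would produce an SOD of $\mathcal{D}$ strictly finer than $(\Pi_i;i\in\Phi_n)$, contradicting finestness. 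With this lemma in hand, $Q_{\mathcal{U}}^{-1}(\Sigma(\mathcal{D}/\mathcal{U}))$ is exactly the set of finest SODs whose last term equals $\mathcal{U}$, the sets for distinct $\mathcal{U}$ are genuinely pairwise disjoint under the corollary's own notation, each is in bijection with $\Sigma(\mathcal{D}/\mathcal{U})$ by Proposition \ref{reduction thm}, and your covering argument (the assignment $(\Pi_i;i\in\Phi_n)\mapsto\Pi_n$) completes the proof.
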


\subsection{Mutation graphs}  \label{mutation graph}
Inspired by Sections \ref{Sect 4} and \ref{Sect 5}, we introduce the following definition.
\begin{defn}
The \emph{mutation graph}
of finest $\infty$-admissible SODs of a triangulated $\mathcal{D}$ is defined as follows:
\begin{itemize}
\item [(1)] Vertices are all
finest $\infty$-admissible  SODs of $\mathcal{D}$;
\item [(2)] Arrows correspond to right mutations between them.
\end{itemize}
\end{defn}
Similarly, we define the corresponding \emph{mutation graphs} for finite finest $\infty$-admissible t-stabilities and finite finest $\infty$-s-admissible filtrations, respectively.
If $\mathcal{D}$ admits a Serre functor, then  by  Lemma \ref{admissible sucat and sod gen by excep seq},  the vertices of each graph are exactly the finest ones.

As a consequence of Propositions  \ref{t-stab and sod} and \ref{mutaion preserve finest}, we have the following proposition.

\begin{prop}
Let $\mathcal{D}$  be a triangulated category.
Then there are one-to-one correspondences among the mutation graphs of
\begin{itemize}
  \item [(1)] finest $\infty$-admissible SODs of $\mathcal{D}$;
  \item [(2)] finite  finest $\infty$-s-admissible filtrations in $\mathcal{D}$;
  \item [(3)] equivalence classes of
 finite finest $\infty$-admissible t-stabilities on $\mathcal{D}$.
\end{itemize}
\end{prop}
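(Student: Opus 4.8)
The plan is to establish the three claimed bijections pairwise by exhibiting, on objects and on arrows, the correspondences already set up earlier in the paper, and then checking that they intertwine the right-mutation operators. All the hard work has effectively been done; what remains is bookkeeping. First I would handle the correspondence between (1) and (2). On vertices this is precisely the restriction of $\xi$ given in Proposition \ref{mutaion preserve finest}, which sends finest $\infty$-admissible SODs of $\mathcal{D}$ bijectively to finite finest $\infty$-s-admissible filtrations in $\mathcal{D}$; on arrows, the same proposition records that $\xi\rho_i=\sigma_{n-i}\xi$, so a right-mutation arrow $(\Pi_i;i\in\Phi_n)\to\rho_i(\Pi_i;i\in\Phi_n)$ of SODs is carried to the right-mutation arrow $\xi(\Pi_i;i\in\Phi_n)\to\sigma_{n-i}\xi(\Pi_i;i\in\Phi_n)$ of filtrations. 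The reindexing $i\mapsto n-i$ is an automorphism of the arrow set at each fixed length $n$, so $\xi$ is an isomorphism of the two mutation graphs.

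Next I would handle the correspondence between (1) and (3). By Proposition \ref{t-stab and sod} the map $\eta$ is a bijection from equivalence classes of finite t-stabilities on $\mathcal{D}$ to SODs of $\mathcal{D}$, sending $(\Phi_n,\{\Pi_i\}_{i\in\Phi_n})\mapsto(\Pi_i;i\in\Phi_n)$; since the constituent subcategories are literally the same on both sides, $\eta$ restricts to a bijection between finite finest $\infty$-admissible t-stabilities and finest $\infty$-admissible SODs (a t-stability and its image SOD share the property of every subcategory being admissible, and likewise for being finest and for the $\infty$-admissibility of all iterated mutations, because $\eta$ is compatible with the refinement orders by Definition \ref{finer or coarser} and with mutations at index $i$ by construction). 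For the arrows, one transports the right mutation $\rho_i$ of SODs through $\eta^{-1}$ to the corresponding operation on finite t-stabilities, noting that $\rho_i$ only alters the subcategories in positions $i,i+1$, which again lifts verbatim to the t-stability side. Hence $\eta$ is an isomorphism of the two mutation graphs.

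Composing the two isomorphisms gives the correspondence between (2) and (3) and completes the argument; the final remark in the statement, that when $\mathcal{D}$ has a Serre functor all three graphs have exactly the finest objects as vertices, follows immediately from Lemma \ref{admissible sucat and sod gen by excep seq} together with the definition of the mutation graphs. The one point that genuinely requires a sentence of care, and which I would single out as the main (mild) obstacle, is verifying that the adjectives ``$\infty$-admissible'', ``$\infty$-s-admissible'', and ``finest'' are matched correctly under $\xi$ and $\eta$ when the subcategory $\Pi_n=\mathcal{T}_1$ may itself be admissible but not all $\Pi_i$ are forced to be so a priori in an arbitrary triangulated category; however, Proposition \ref{mutaion preserve finest} already asserts precisely this matching for $\xi$, and for $\eta$ it is immediate from the equality of the subcategories involved, so no new verification is needed beyond citing these results. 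I would therefore present the proof as a short two-paragraph argument invoking Propositions \ref{t-stab and sod} and \ref{mutaion preserve finest} and then observing graph-isomorphism on arrows.
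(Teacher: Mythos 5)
Your proposal is correct and follows essentially the same route as the paper, which states this proposition as a direct consequence of Propositions \ref{t-stab and sod} and \ref{mutaion preserve finest}: $\xi$ gives the mutation-compatible bijection between (1) and (2), and $\eta$ identifies (3) with (1), the composition handling (2)--(3). Your extra bookkeeping about matching the adjectives and transporting arrows is exactly the routine verification the paper leaves implicit.
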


The following result provides a characterization of the connectedness of the mutation graph of $\mathcal{D}$.

\begin{thm} \label{connectedness reduction}
Assume  the mutation graph is connected for every quotient $\mathcal{D}/ \mathcal{U}$ with $\mathcal{U}\in\mathscr{A}(\mathcal{D})$.
Then the mutation graph of $\mathcal{D}$ is connected if and only if
for any $\mathcal{U},\mathcal{V}\in\mathscr{A}(\mathcal{D})$, there exist $\mathcal{W}_i\in\mathscr{A}(\mathcal{D})$ for $1\leq i\leq m$ such that
$\mathcal{W}_{1}=\mathcal{U},\;\;
\mathcal{W}_{m}=\mathcal{V},\;\;
\text{and}\;\; \mathscr{A}(\mathcal{W}_i^{\bot})\cap
\mathscr{A}({}^{\bot}\mathcal{W}_{i\pm1})\neq\emptyset$.
\end{thm}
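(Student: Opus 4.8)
The statement has the shape of an ``if and only if'' governed by a reduction hypothesis, so the plan is to use the decomposition of $\Sigma(\mathcal{D})$ from Corollary \ref{embedding and decomposition} together with the reduction bijection of Proposition \ref{reduction thm} to translate connectedness of the whole graph into a statement about how the pieces $Q_{\mathcal{U}}^{-1}(\Sigma(\mathcal{D}/\mathcal{U}))$ are glued along mutations. First I would fix two arbitrary vertices of the mutation graph of $\mathcal{D}$, namely finest $\infty$-admissible SODs $\mathscr{S}$ and $\mathscr{S}'$, and record that each lies in exactly one fibre of the decomposition, say $\mathscr{S}\in Q_{\mathcal{U}}^{-1}(\Sigma(\mathcal{D}/\mathcal{U}))$ and $\mathscr{S}'\in Q_{\mathcal{V}}^{-1}(\Sigma(\mathcal{D}/\mathcal{V}))$; this singles out the ``top'' subcategories $\Pi_n=\mathcal{U}$ and $\Pi'_n=\mathcal{V}$ (after possibly allowing the degenerate case where the top piece equals $\mathcal{U}$ versus strictly contains it, exactly as in the proof of Proposition \ref{reduction thm}).

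For the ``if'' direction I would argue that a path of right mutations \emph{within} a single fibre $Q_{\mathcal{U}}^{-1}(\Sigma(\mathcal{D}/\mathcal{U}))$ exists because, by Proposition \ref{reduction thm}, that fibre is in mutation-compatible bijection with all of $\Sigma(\mathcal{D}/\mathcal{U})$, whose mutation graph is connected by hypothesis; hence two vertices sharing the same top subcategory $\mathcal{U}$ can be joined. It then remains to move between fibres, and this is precisely what the condition $\mathscr{A}(\mathcal{W}_i^{\bot})\cap\mathscr{A}({}^{\bot}\mathcal{W}_{i\pm1})\neq\emptyset$ is designed to do: a finest $\infty$-admissible subcategory lying simultaneously in $\mathscr{A}(\mathcal{W}_i^{\bot})$ and $\mathscr{A}({}^{\bot}\mathcal{W}_{i+1})$ furnishes a common finest $\infty$-admissible SOD in which both $\mathcal{W}_i$ and $\mathcal{W}_{i+1}$ appear as adjacent terms with the intermediate piece controllable by a mutation, so that a single right (or left) mutation at that spot exchanges the role of $\mathcal{W}_i$ and $\mathcal{W}_{i+1}$ as the top subcategory; chaining the $\mathcal{W}_i$ from $\mathcal{U}$ to $\mathcal{V}$ and inserting intra-fibre paths at each stage produces a path from $\mathscr{S}$ to $\mathscr{S}'$. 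Here I would lean on Lemma \ref{right addmiss and sod2} to guarantee $\infty$-admissibility of the relevant orthogonals, and on the braid-relation corollary and Proposition \ref{mutaion preserve finest} to keep all intermediate SODs finest and $\infty$-admissible.

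For the ``only if'' direction I would run the argument in reverse: given a path of right mutations from $\mathscr{S}$ to $\mathscr{S}'$ in the mutation graph of $\mathcal{D}$, track the top subcategory $\Pi_n$ along the path. A right mutation $\rho_i$ changes $\Pi_n$ only when $i=n-1$, and in that case the old and new top subcategories are $\Pi_n$ and $\Pi_{n-1}$, which by construction satisfy a semi-orthogonality forcing $\Pi_{n-1}\in\mathscr{A}({}^{\bot}\Pi_n)$ or symmetrically $\Pi_n\in\mathscr{A}(\Pi_{n-1}^{\bot})$; reading off the sequence of distinct top subcategories encountered gives the chain $\mathcal{W}_1=\mathcal{U},\dots,\mathcal{W}_m=\mathcal{V}$ together with the required nonempty intersections $\mathscr{A}(\mathcal{W}_i^{\bot})\cap\mathscr{A}({}^{\bot}\mathcal{W}_{i\pm1})$. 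The main obstacle I anticipate is the bookkeeping around the degenerate case $\Pi_n=\mathcal{U}$ versus $\Pi_n\supsetneq\mathcal{U}$ in Proposition \ref{reduction thm}: one must be careful that the two flavours of the reduction map (indexing by $\Phi_n$ versus $\Phi_{n-1}$) patch together correctly so that the fibres genuinely partition $\Sigma(\mathcal{D})$ and so that a mutation crossing a fibre boundary is correctly detected as a change of the distinguished element of $\mathscr{A}(\mathcal{D})$. Verifying that the ``overlap'' subcategory supplied by the hypothesis really does sit inside a finest $\infty$-admissible SOD of $\mathcal{D}$ — rather than merely of some orthogonal complement — is the delicate point, and I would handle it by a gluing argument combining Proposition \ref{finit t-stab and finite right admiss} with Lemma \ref{the three sets are equal under Serre} to promote local admissibility data to a global SOD.
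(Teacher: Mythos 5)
Your overall strategy is the same as the paper's (decompose $\Sigma(\mathcal{D})$ into the fibres $Q_{\mathcal{U}}^{-1}(\Sigma(\mathcal{D}/\mathcal{U}))$, get intra-fibre paths from Proposition \ref{reduction thm} and the hypothesis, bridge fibres via the overlap condition, and prove necessity by tracking distinguished components along a mutation path), but the two crucial mechanisms are wrong as you state them. In the ``if'' direction, an element $\mathcal{K}\in\mathscr{A}(\mathcal{W}_i^{\bot})\cap\mathscr{A}({}^{\bot}\mathcal{W}_{i+1})$ does \emph{not} furnish a single finest SOD in which $\mathcal{W}_i$ and $\mathcal{W}_{i+1}$ appear as adjacent terms: nothing in the hypothesis makes $\mathcal{W}_i$ and $\mathcal{W}_{i+1}$ semiorthogonal in either order, so there is no ``single right (or left) mutation exchanging the tops''. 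What the condition actually gives is two \emph{different} SODs of $\mathcal{D}$ --- one obtained by appending $\mathcal{W}_i$ to a finest SOD of $\mathcal{W}_i^{\bot}$ containing $\mathcal{K}$, the other by prepending $\mathcal{W}_{i+1}$ to a finest SOD of ${}^{\bot}\mathcal{W}_{i+1}$ containing $\mathcal{K}$ --- and after mutating $\mathcal{K}$ to the last position in each, both lie in the fibre over $\mathcal{K}$ and are joined there using the connectedness hypothesis applied to $\mathcal{D}/\mathcal{K}$. So the fibre crossing is itself an intra-fibre path through a third fibre, not one mutation; this is exactly how the paper argues, and your sketch skips the step that actually uses the hypothesis at the crossing.

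In the ``only if'' direction your membership claims have the orthogonals reversed: for the last two components of an SOD one only knows $\Hom(\Pi_n,\Pi_{n-1})=0$, hence $\Pi_{n-1}\subseteq\Pi_n^{\bot}$ and $\Pi_n\subseteq{}^{\bot}\Pi_{n-1}$, whereas $\Pi_{n-1}\subseteq{}^{\bot}\Pi_n$ and $\Pi_n\subseteq\Pi_{n-1}^{\bot}$ would require $\Hom(\Pi_{n-1},\Pi_n)=0$, which is not part of the SOD axioms. More importantly, even after correcting the directions, neither membership by itself witnesses $\mathscr{A}(\mathcal{W}_k^{\bot})\cap\mathscr{A}({}^{\bot}\mathcal{W}_{k+1})\neq\emptyset$: the condition asks for a \emph{common} finest $\infty$-admissible subcategory lying in both sets, and $\Pi_{n-1}$ (resp.\ $\Pi_n$) lies in only one of them. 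The witness has to be taken among the components left untouched by the mutation, i.e.\ a component shared by the two consecutive SODs on the path (one of $\Pi_1,\dots,\Pi_{n-2}$), which is what the paper's ``direct calculation'' extracts with $\mathcal{W}_k=\Pi^{(k)}_n$ and $\mathcal{W}_{k+1}=\Pi^{(k+1)}_1$. As written, your argument never produces such a common element, so the required chain condition is not established.
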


\begin{proof}
To prove the sufficiency, let $(\Pi_{i};i\in\Phi_n)$ and $(P_{j};j\in\Phi_s)$ be two finest $\infty$-admissible SODs of $\mathcal{D}$, with $\Pi_i=\mathcal{U}$ and $P_j=\mathcal{V}$ for some $i, j$.
By iteratively applying right/left mutations along $\mathcal{U}$ and $\mathcal{V}$,  we may assume $\Pi_{n}=\mathcal{U}$ and $P_{1}=\mathcal{V}$.

Then, there exist  $\mathcal{W}_1,\mathcal{W}_2\in\mathscr{A}(\mathcal{D})$ such that $\mathcal{K}_1\in\mathscr{A}(\mathcal{W}_1^{\bot})\cap
\mathscr{A}({}^{\bot}\mathcal{W}_{2})$ or $\mathcal{K}_1\in\mathscr{A}(\mathcal{W}_2^{\bot})\cap
\mathscr{A}({}^{\bot}\mathcal{W}_{1})$.
Without loss of generality, we treat the case $\mathcal{K}_1\in\mathscr{A}(\mathcal{W}_1^{\bot})\cap
\mathscr{A}({}^{\bot}\mathcal{W}_{2})$, the other is similar.
This gives an $\infty$-admissible SOD  $(Q_{k};k\in\Phi_t)$ with $Q_{k}=\mathcal{K}_1$ and $Q_{1}=\mathcal{W}_{2}$.
Repeated right mutations along $\mathcal{K}_1$ in $(Q_{k};k\in\Phi_s)$ gives $Q_{t}=\mathcal{K}_1$.
Analogously, since $\mathcal{W}_{1}=\mathcal{U}$ and $\mathcal{K}_1\in\mathscr{A}(\mathcal{W}_1^{\bot})$, there exists an $\infty$-admissible SOD $(\Pi'_{i};i\in\Phi_n)$ with $\Pi'_{n}=\mathcal{K}_1$ that is connected to $(\Pi_{i};i\in\Phi_n)$.

The connectedness of the subgraph with vertex set   $Q_{\mathcal{K}_1}^{-1}(\Sigma(\mathcal{D}/\mathcal{K}_1))$ implies that $(\Pi_{i};i\in\Phi_n)$ and $(Q_{k};k\in\Phi_t)$ are connected, which yields $n=t=s$.
Fix an integer $n$, and let $(\Pi^{(j)}_{i};i\in\Phi_n)$ for $1\leq j\leq m$ be the SODs corresponding to $\mathcal{W}_j\in\mathscr{A}(\mathcal{D})$, where $\Pi^{(1)}_{i}=\Pi_{i}$ and $\Pi^{(m)}_{i}=P_{i}$ for $i\in\Phi_n$, and let $(Q^{(j)}_{i};i\in\Phi_n)$ for $1\leq j\leq m-1$ be the SODs corresponding to $\mathcal{K}_{j}\in\mathscr{A}(\mathcal{W}_j^{\bot})\cap
\mathscr{A}({}^{\bot}\mathcal{W}_{j\pm1})$.
By iterating the above argument, each $(\Pi^{(j)}_{i};i\in\Phi_n)$ is connected to $(\Pi^{(j+1)}_{i};i\in\Phi_n)$, leading to a path between $(\Pi_{i};i\in\Phi_n)$ and $(P_{i};i\in\Phi_n)$ such that the subgraph of these vertices has the shape depicted below,
\begin{equation*}
\xymatrix@C=2.25em@R=5.5ex@M=0.1pt@!0{
\cdots&&&&(Q^{(1)}_{i};i\in\Phi_n)\ar@{->}[rrdd]^{\rho^{\pm}}&&
&&&\cdots&\cdots&\ar@{->}[rrdd]^-{\rho^{\pm}}&&&&
(Q^{(m-1)}_{i};i\in\Phi_n)\ar@{->}@<1.7pt>[rrrdd]^-{\rho^{\pm}}
&&&&&
\cdots\\
&&&&&&&&&&&&&&&&&&&&&&&&  \\
\cdots&& (\Pi^{}_{i};i\in\Phi_n)\ar@{->}[rruu]^-{\rho^{\pm}}&&&&
(\Pi^{(2)}_{i};i\in\Phi_n)\ar@{->}[rruu]^-{\rho^{\pm}}&&&\cdots&\cdots
&&& (\Pi^{(m-1)}_{i};i\in\Phi_n)\ar@{->}[rruu]^-{\rho^{\pm}}&&&&&
(P_{i};i\in\Phi_n)&&\dots
&&  \\ }
\end{equation*}
where $\rho^{\pm}$ denotes a finite sequence
of left/right mutations.
Hence, the mutation graph of $\mathcal{D}$ is connected.

To prove the necessity, let $\mathcal{U},\mathcal{V}\in\mathscr{A}(\mathcal{D})$.
Then there exist SODs $(\Pi^{(1)}_{i};i\in\Phi_n)$ and $(\Pi^{(m)}_{j};j\in\Phi_n)$ with $\mathcal{U}=\Pi^{(1)}_{n}$ and $\mathcal{V}=\Pi^{(m)}_{1}$, and a mutation path passing through all  $(\Pi^{(k)}_{i};i\in\Phi_n)$ for $1\leq k\leq m$.
By direct calculation, the right/left mutation from $(\Pi^{(k)}_{i};i\in\Phi_n)$ to
$(\Pi^{(k+1)}_{i};i\in\Phi_n)$ yields $\mathscr{A}(\mathcal{W}_k^{\bot})\cap
\mathscr{A}({}^{\bot}\mathcal{W}_{k+1})\neq\emptyset$, where $\mathcal{W}_k=\Pi^{(k)}_{n}$ and $\mathcal{W}_{k+1}=\Pi^{(k+1)}_{1}$, or $\mathcal{W}_k=\Pi^{(k+1)}_{n}$ and $\mathcal{W}_{k+1}=\Pi^{(k)}_{1}$.
\end{proof}

\begin{rem}  \label{mutation graph of subgraphs}
Under the assumption, the relation $\mathscr{A}(\mathcal{W}_i^{\bot})\cap
\mathscr{A}({}^{\bot}\mathcal{W}_{i\pm1})\neq\emptyset$ can be interpreted geometrically.
It means that the subgraphs consisting of $Q_{\mathcal{W}_i}^{-1}(\Sigma(\mathcal{D}/ \mathcal{W}_i))$ and $Q_{\mathcal{W}_{i\pm1}}^{-1}(\Sigma(\mathcal{D}/ \mathcal{W}_{i\pm1}))$, respectively, are connected by a finite sequence of connected subgraphs.

Thus, the mutation graph of $\mathcal{D}$  is connected precisely when so is the \emph{component graph}.
Namely, the vertices of this graph are the subgraphs $Q_{\mathcal{U}}^{-1}(\Sigma(\mathcal{D}/\mathcal{U}))$ for $\mathcal{U} \in \mathscr{A}(\mathcal{D})$, and there is an arrow $\rho_j$ from $Q_{\mathcal{U}}^{-1}(\Sigma(\mathcal{D}/\mathcal{U}))$ to $Q_{\mathcal{V}}^{-1}(\Sigma(\mathcal{D}/\mathcal{V}))$ if and only if  there exist SODs $(\Pi_{i};i\in\Phi_n)$ and $(P_{i};i\in\Phi_n)$ in the respective subgraphs such that $\rho_j(\Pi^{}_{i};i\in\Phi_n)=(P_{i};i\in\Phi_n)$, which implies  $\Pi_{1}=P_{1}$.
\end{rem}

\section{Examples}

In this section we present examples with detailed constructions.

\subsection{$A_2$ case}
Let $Q: 1\rightarrow2$ and $A_2$:=$\mathbf{k}Q$.
Then the Auslander-Reiten (AR) quiver $\Gamma(\mathcal{D})$ of the derived category $\mathcal{D}=\mathcal{D}^{b}({\rm{mod}}$-$A_2)$ has the form
$$\xymatrix@M=1pt@!0{
&&&S_{2}[-1]\ar[dr]&&S_{1}[-1]\ar[dr]    &  & P_{1} \ar[dr]_{}  &  &S_{2}[1]\ar[dr]&&S_{1}[1]\ar[dr]_{}  &  & \\
&\cdots&\ar[ur]^{}&  &P_{1}[-1]\ar[ur]^{}  &   & S_{2}\ar[ur]^{} &  & S_{1}\ar[ur]^{}  && P_{1}[1] \ar[ur]^{}&& &\cdots& \\ }
$$
There are exactly three indecomposable modules in ${\rm{mod}}$-$A_2$, namely, the simple modules $S_1, S_2$ and the projective modules $P_1, P_2(=S_2)$. Moreover, there are only three equivalence classes of finite finest t-stability  $(\Phi_2,\{\Pi_{i}\}_{i\in\Phi_2})$ on $\mathcal{D}^{b}({\rm{mod}}$-$A_2)$ given as follows:
\begin{itemize}
  \item [(1)]  $\Pi_{ 1}=\langle S_1\rangle,
  \Pi_{ 2}=\langle S_2\rangle$; Up to shifts,  the  indecomposable module $P_{1}$ is the only one that  does not belong to $\Pi_{i}$ for $i=1, 2$, whose HN-filtration is given by
$S_2 \rightarrow P_1 \rightarrow S_1 \rightarrow S_2[1]$;
  \item [(2)] $\Pi_{ 1}=\langle S_2\rangle,
  \Pi_{2}=\langle P_{1}\rangle$; Up to shifts, the indecomposable module $S_1$ is the only one not in $\Pi_{i}$ for $i=1, 2$
  with an HN-filtration given by
$P_1 \rightarrow S_1 \rightarrow S_2[1] \rightarrow P_1[1]$;
  \item [(3)] $\Pi_{ 1}=\langle P_{1}\rangle,
  \Pi_{2}=\langle S_1\rangle$; Up to shifts,  the  indecomposable module $S_2$ is the only one not in $\Pi_{i}$ for $i=1, 2$ that has an HN-filtration given by
$S_1[-1] \rightarrow S_2 \rightarrow P_{1} \rightarrow S_1$.
\end{itemize}

As a consequence of  Propositions \ref{t-stab and sod} and \ref{any sod construction}, we list all (finest) SODs of $\mathcal{D}^{b}({\rm{mod}}$-$A_2)$:
$$(a)\; (\langle S_1 \rangle,\langle S_2\rangle), \quad\quad\;
(b)\; (\langle S_2 \rangle,\langle P_1\rangle),\quad\quad\;
(c)\; (\langle P_1 \rangle,\langle S_1\rangle).$$

The mutation graphs of finest SODs, as well as those of finite finest t-stabilities and finite finest admissible filtrations, are given below.
Here, we write $\Pi_1<\Pi_2$ to denote a finite finest t-stability $(\Phi_2,\{\Pi_{i}\}_{i\in\,\Phi_2})$.

\begin{center}
\scalebox{0.95}{
$
\xymatrix@C0.000001pt@R0.5pt{
&&&&&&&&&&&{\begin{smallmatrix}(\langle S^{}_{1}\rangle,\langle S^{}_{2}\rangle)
\end{smallmatrix}}\ar@/^/[rrdddd]&&&&&&&\\
&&&&&&&&&&\\
&&&&&&&&&&\\
&&&&&&&&&&\\
&&&&&&&&&{\begin{smallmatrix}(\langle S^{}_{2}\rangle,\langle P^{}_{1}\rangle)
\end{smallmatrix}}\ar@/^/[rruuuu]&&&&
{\begin{smallmatrix}(\langle P^{}_{1}\rangle,\langle S^{}_{1}\rangle)
\end{smallmatrix}}\ar@/^/[llll]
&&&&&&&&&&\\
}$
}
\end{center}
\vspace{0.15cm}
\[
\]

\begin{center}
\scalebox{0.95}{
$
\xymatrix@C0.000001pt@R0.5pt{
&&&&&&&&&&\\
&&&&&&&&{\begin{smallmatrix}\langle S^{}_{1}\rangle\;\,<\;\,\langle S^{}_{2}\rangle
\end{smallmatrix}}\ar@/^/[rrdddd]&&&&&&&&&&&
{\begin{smallmatrix}
0\;\,\subsetneq\;\,\langle S^{}_{2}\rangle\;\,\subsetneq\;\,\mathcal{D}
\end{smallmatrix}}\ar@/^/[rrdddd]&&&\\
&&&&&&&&&&&&&&\\
&&&&&&&&&&&&&&\\
&&&&&&&&&&&&&&\\
&&&&&&{\begin{smallmatrix}\langle S^{}_{2}\rangle\;\,<\;\,\langle P^{}_{1}\rangle
\end{smallmatrix}}\ar@/^/[rruuuu]&&&&
{\begin{smallmatrix}\langle P^{}_{1}\rangle\;\,<\;\,\langle S^{}_{1}\rangle
\end{smallmatrix}}\ar@/^/[llll]
&&&&&&&
{\begin{smallmatrix}0\;\,\subsetneq\;\,\langle P^{}_{1}\rangle\;\,\subsetneq\;\,\mathcal{D}
\end{smallmatrix}}\ar@/^/[rruuuu]&&&&
{\begin{smallmatrix}0\;\,\subsetneq\;\,\langle S^{}_{1}\rangle\;\,\subsetneq\;\,\mathcal{D}
\end{smallmatrix}}\ar@/^/[llll]&&&\\
}$
}
\end{center}
\vspace{0.15cm}

\subsection{$A_3$ case}

Let $Q: 1\rightarrow2\rightarrow3$ and $A_3$:=$\mathbf{k}Q$.
Then the Auslander-Reiten quiver $\Gamma(\mathcal{D})$ of the derived category  $\mathcal{D}=\mathcal{D}^{b}({\mathrm{mod}}$-$A_3)$ has the form
$$\xymatrix@M=1pt@!0{
&S_{3}[-1]\ar[dr]&&S_{2}[-1]\ar[dr]&&S_{1}[-1]\ar[dr]    &  & P_{1} \ar[dr]_{}  &  &S_{3}[1]\ar[dr]&&S_{2}[1]\ar[dr]_{}  &  &S_{1}[1]  \\
\cdots&&P_{2}[-1]\ar[dr]_{} \ar[ur]^{} &  & I_{2}[-1]\ar[dr]_{}\ar[ur]^{}  &   & P_{2}\ar[dr]_{} \ar[ur]^{} &  & I_{2}\ar[dr]_{}\ar[ur]^{}  && P_{2}[1]\ar[dr]_{} \ar[ur]^{}&& I_{2}[1]\ar[dr]\ar[ur]^{}&&\cdots \\
&\ar[ur]&&P_{1}[-1] \ar[ur]^{} &  &   S_{3} \ar[ur]^{} &  & S_{2} \ar[ur]^{} &  & S_{1}  \ar[ur]^{} &  & P_{1}[1] \ar[ur]^{} &  & }$$

Let $\Phi_3 =\{ 1, 2, 3\}$ be the linear ordered set, and let $\Pi_{1}=\langle P_1\rangle, \Pi_{2}=\langle S_2\rangle, \Pi_{3}=\langle I_2\rangle$, then $(\Phi_3,\{\Pi_{i}\}_{i\in\,\Phi_3})$ is a finite finest t-stability on $\mathcal{D}$.
Up to shifts,  the  indecomposable modules $S_3, P_{2}, S_{1}$ are the only ones that  do not belong to $\Pi_{i}$ for $i=1, 2, 3$, whose HN-filtration are given respectively by:
\begin{equation*}
{\footnotesize\xymatrix { I_{2}[-1]\ar@{->}[r]^-{}\ar@{--}[rd] &S_{3}\ar@{->}[d]^{}&&
I_{2}[-1]\ar@{->}[r]^-{}\ar@{--}[rd]&S_{1}[-1]\ar@{->}[d]^{}\ar@{->}[r]^-{}\ar@{--}[rd] &P_{2}\ar@{->}[d]^{}&&
I_{2}^{}\ar@{->}[r]^-{}\ar@{--}[rd]&S_{1}.\ar@{->}[d]^{}&&\\
&P_{1}&&&S_{2}&P_{1}&&&S_{2}[1] & }}
\end{equation*}

Similarly,  we can obtain all the other equivalence classes of finite finest t-stabilities.
As a result, there are sixteen classes of  finest SODs $(\Pi_{1},\Pi_{2},\Pi_{3})$ of $\mathcal{D}^{b}({\mathrm{mod}}$-$A_3)$ as in Table \ref{A3}, where each line determines an equivalence class.

\begin{table}[h]
\caption{Sixteen finest SODs $(\Pi_{1},\Pi_{2},\Pi_{3})$ of $\mathcal{D}^{b}({\rm{mod}}$-$A_3)$}
\begin{tabular}{|c|c|c|c|c|c|c|c|}
\hline
\makecell*[c]{$\Pi_{1}$}&$\Pi_{2}$&$\Pi_{3}$
&\multirow{8}{*}{} &
$\Pi_{1}$&$\Pi_{2}$&$\Pi_{3}$\\
\cline{1-3}  \cline{5-7}
\makecell*[c]{$\langle P_1\rangle$}&$ \langle I_2\rangle $&$\langle S_1\rangle$&&
$\langle S_1\rangle$&$ \langle P_2\rangle $&$\langle S_2\rangle$\\
\cline{1-3}  \cline{5-7}
\makecell*[c]{$\langle I_2\rangle$}&$ \langle S_3\rangle $&$\langle S_1\rangle$&&
$\langle P_1\rangle$&$ \langle S_1\rangle $&$\langle S_2\rangle$\\
\cline{1-3}  \cline{5-7}
\makecell*[c]{$\langle S_3\rangle$}&$ \langle P_1\rangle $&$\langle S_1\rangle$&&
$\langle P_2\rangle$&$ \langle P_1\rangle $&$\langle S_2\rangle$\\
\cline{1-3}  \cline{5-7}
\makecell*[c]{$\langle S_3\rangle$}&$ \langle P_2\rangle $&$\langle P_1\rangle$&&
$\langle S_1\rangle$&$ \langle S_2\rangle $&$\langle S_3\rangle$\\
\cline{1-3}  \cline{5-7}
\makecell*[c]{$\langle P_2\rangle$}&$ \langle S_2\rangle $&$\langle P_1\rangle$&&
$\langle S_2\rangle$&$ \langle I_2\rangle $&$\langle S_3\rangle$\\
\cline{1-3}  \cline{5-7}
\makecell*[c]{$\langle S_2\rangle$}&$ \langle S_3\rangle $&$\langle P_1\rangle$&&
$\langle I_2\rangle$&$ \langle S_1\rangle $&$\langle S_3\rangle$\\
\cline{1-3}  \cline{5-7}
\makecell*[c]{$\langle S_1\rangle$}&$ \langle S_3\rangle $&$\langle P_2\rangle$&&
$\langle P_1\rangle$&$ \langle S_2\rangle $&$\langle I_2\rangle$\\
\cline{1-3}  \cline{5-7}
\makecell*[c]{$\langle S_3\rangle$}&$ \langle S_1\rangle $&$\langle P_2\rangle$&&
$\langle S_2\rangle$&$ \langle P_1\rangle $&$\langle I_2\rangle$\\
\hline
\end{tabular}
\label{A3}
\end{table}

Moreover, for any finest SOD $(\Pi_{1},\Pi_{2},\Pi_{3})$, each $\Pi_i$ is generated by a single object $E_i$.
By identifying each finest SOD $(\langle E_1\rangle, \langle E_2\rangle, \langle E_3\rangle)$ with $(E_1,E_2,E_3)$,
the mutation graph of finest SODs of $\mathcal{D}^{b}({\rm{mod}}$-$A_3)$ is depicted below.

\begin{center}
\[\xymatrix@C0.00001pt@R0.9pt{
&&&&&&&{\begin{smallmatrix}(P_{1} ,I_2, S_1)\ar@[blue][lllllllddddddddddd]\ar[rrrrrrrrr]
\end{smallmatrix}}&&&&&&&&&
{\begin{smallmatrix}(S_{3} ,P_1, S_1)
\end{smallmatrix}}\ar@[blue][rrrrrddddddddddd]\ar[llldd]&&&&&\\
&&&&&&&&&&&&&&&&&&&&&&&&&&&\\
&&&&&&&&&&&&&
{\begin{smallmatrix}(I_{2} ,S_3, S_1)
\end{smallmatrix}}\ar[lllllluu]\ar@[blue]@/^/[rrdd]&&&&&&&\\
&&&&&&&&&&&&&&&&&&&&&&&\\
&&&&&&&&&&
{\begin{smallmatrix}(P_{2} ,P_1, S_2)
\end{smallmatrix}}\ar[llldddddddddd]
\ar@[blue]@/^/[rrrrrrdddddddddd]&&&&&
{\begin{smallmatrix}(I_{2} ,S_1, S_3)
\end{smallmatrix}}\ar@[blue]@/^/[lluu]\ar[lldddddddddddddddd]&&&\\
&&&&&&&&&&&&&&&&&&&&&&\\
&&&&&&&&&&&&&&&&&&&&&&&\\
&&&&&&&{\begin{smallmatrix}(P_{1} ,S_1, S_2)
\end{smallmatrix}}\ar@[blue][uuuuuuu]\ar[rrruuu]&&&&&&&&&
{\begin{smallmatrix}(S_{3} ,S_1, P_2)
\end{smallmatrix}}\ar@[blue][uuuuuuu]\ar@/^/[lllllldd]&&&\\
&&&&&&&&&&&&&&&\\
&&&&&&&&&&
{\begin{smallmatrix}(S_{1} ,S_3, P_2)
\end{smallmatrix}}\ar@/^/[rrrrrruu]\ar@[blue][dddddddd]&&\\
&&&&&&&&&&&&&&&\\
{\begin{smallmatrix}(P_{1} ,S_2, I_2)
\end{smallmatrix}}\ar@[blue][rrrrrrruuuu]\ar@/^/[rrrrrrrddddddddddd]
&&&&&&&&&&&&&
&&&&&&&&
{\begin{smallmatrix}(S_{3} ,P_2, P_1)
\end{smallmatrix}}\ar@[blue][llllluuuu]\ar[lllllddddddddddd]&&\\
&&&&&&&&&&&&&\\
&&&&&&&&&&&&&&\\
&&&&&&&{\begin{smallmatrix}(S_{1} ,P_2, S_2)
\end{smallmatrix}}\ar[uuuuuuu]\ar@[blue][rrruuuuu]&&&&&&&&&
{\begin{smallmatrix}(P_{2} ,S_2, P_1)
\end{smallmatrix}}\ar@[blue]@/^/[lllllluuuuuuuuuu]\ar[rrrrruuu]&&\\
&&&&&&&&&&&&&&&&&&&&\\
&&&&&&&&&&&&&&&&&&&&\\
&&&&&&&&&&{\begin{smallmatrix}(S_{1} ,S_2, S_3)
\end{smallmatrix}}\ar@[blue][llluuu]\ar[rrrrruuuuuuuuuuuuu]&\\
&&&&&&&&&&&&&&&\\
&&&&&&&&&&&&&&\\
&&&&&&&&&&&&&
{\begin{smallmatrix}(S_{2} ,I_2, S_3)
\end{smallmatrix}}\ar[llluuu]\ar@[blue][lllllldd]&&&&&\\
&&&&&&&&&&&&&&&&&&\\
&&&&&&&{\begin{smallmatrix}(S_{2} ,P_1, I_2)
\end{smallmatrix}}\ar@[blue][rrrrrrrrr]\ar@/^/[llllllluuuuuuuuuuu]&&&&&&&&&
{\begin{smallmatrix}(S_{2} ,S_3, P_1)
\end{smallmatrix}}\ar@[blue][llluu]\ar[uuuuuuuu]&&\\
  }\]
\end{center}

Recall that $\Sigma(\mathcal{D})$ denotes the set of all finest SODs of $\mathcal{D}$.
By Proposition \ref{reduction thm}, each $\Sigma(\mathcal{D}/\langle E_{i}\rangle)$ embeds into $\Sigma(\mathcal{D})$, leading to the following decomposition
\begin{equation*}
\scalebox{0.84}{$
Q^{-1}_{\langle S_{1}\rangle}(\Sigma(\mathcal{D}/\langle S_{1}\rangle))\;\;\dot{\bigcup}\;\;
Q^{-1}_{\langle S_{2}\rangle}(\Sigma(\mathcal{D}/\langle S_{2}\rangle))\;\;\dot{\bigcup}\;\; Q^{-1}_{\langle S_{3}\rangle}(\Sigma(\mathcal{D}/\langle S_{3}\rangle)) \dot{\bigcup}\;\;Q^{-1}_{\langle P_{1}\rangle}(\Sigma(\mathcal{D}/\langle P_{1}\rangle))\;\;\dot{\bigcup}\;\; Q^{-1}_{\langle P_{2}\rangle}(\Sigma(\mathcal{D}/\langle P_{2}\rangle))\;\;\dot{\bigcup}\;\; Q^{-1}_{\langle I_{2}\rangle}(\Sigma(\mathcal{D}/\langle I_{2}\rangle)),
$}
\end{equation*}
where each quotient $\mathcal{D}/\langle E_{i}\rangle$ is of type $A_2$ or $A_1\times A_1$.

Since the braid group action on full exceptional sequences in ${\mathrm{mod}}$-$A_3$ is transitive \cite{crbo}, it follows from Theorem \ref{finest sod and full exceptional sequence} that the mutation graph of $\Sigma(\mathcal{D})$ is connected.
Clearly, each subgraph with vertex set $Q_{\langle E_{i}\rangle}^{-1}(\Sigma(\mathcal{D}/\langle E_{i}\rangle))$ is connected, so by Theorem \ref{connectedness reduction}, the component graph in Remark \ref{mutation graph of subgraphs} (with blue arrows) is as well.

\subsection{Weighted projective line of weight type $(2)$ case}

Let $\mathbb{X}=\mathbb{X}(2)$ be a weighted projective line of weight type $(2)$.
Recall from \cite{gl} that the group  $\mathbb{L} = \mathbb{L}(2)$ is the rank one abelian group with generators
$\vec{x}_{1}, \vec{x}_{2}$ and the relations $$ 2\vec{x}_{1}=\vec{x}_{2}:=\vec{c}.$$

Each element $\vec{x}\in\mathbb{L}$ has the \emph{normal form}
$\vec{x}=l\vec{c}$ or $\vec{x}=\vec{x}_1+l\vec{c}$ for some $l\in\mathbb{Z}$. In this case, each indecomposable bundle is a line
bundle, hence has the form $\mathcal{O}(\vec{x}), \vec{x} \in \mathbb{L}$.
The AR-quiver $\Gamma(\coh \mathbb{X}(2))$ of $\coh \mathbb{X}(2)$ has the following shape, cf. \cite{lenz}:
$$\begin{tikzpicture}[row sep=17cm, column sep=1.2cm]
\tikzstyle{every node}=[font=\large,scale=0.69]
\node (O) at (-10.48,-0.8) [above] {$\cdots$};

\draw[->] (-9.8,-0.47) --(-9.55,-0.22);

\draw[->] (-9.8,-0.76) --(-9.465,-1.05);

\node (O) at (-10.48,0.52) [above] {$\cdots$};

\draw[densely dotted] (-10.15,0.675) --(-9.167,0.675);

\draw[densely dotted] (-10.15,-1.46) --(-9.8519,-1.46);

\node (O) at (-10.48,-1.6) [above] {$\cdots$};

\node (O) at (-10.48,-0.15) [above] {$\cdots$};

\node (O) at (-9.38,-1.7) [above] {$\mathcal{O}(-\vec{x}_{1})$};

\draw[->] (-9.21,-1.1) --(-8.95,-0.85);

\node (O) at (-9.3,-0.25) [above] {$\mathcal{O}(-\vec{c}\,)$};

\draw[->] (-9.23,0.25) --(-9.02,0.48);

\draw[->] (-9.23,-0.15) --(-8.95,-0.42);

\node (O) at (-8.85,-0.89) [above] {$\mathcal{O}$};

\draw[->] (-8.6,-0.45) --(-8.38,-0.2);

\draw[->] (-8.59,-0.8) --(-8.3,-1.1);

\draw[densely dotted] (-7.82,-1.46) --(-7.2285,-1.46);

\node (O) at (-8.15,-1.7) [above] {$\mathcal{O}(\vec{c}\,)$};

\draw[densely dotted] (-8.859,-1.46) --(-8.465,-1.46);

\draw[->] (-7.9,-1.1) --(-7.65,-0.86);

\draw[->] (-7.8,-0.2) --(-7.58,-0.44);

\node (O) at (-8.1,-0.25) [above] {$\mathcal{O}(\vec{x}_{1})$};

\draw[->] (-7.96,0.27) --(-7.75,0.49);

\draw[densely dotted] (-8.19,0.675) --(-7.91,0.675);

\node (O) at (-7.58,0.45) [above] {$\mathcal{O}(\vec{c}\,)$};

\draw[->] (-8.5,0.53) --(-8.28,0.28);

\node (O) at (-8.7,0.45) [above] {$\mathcal{O}(-\vec{x}_{1})$};

\draw[densely dotted] (-7.225,0.675) --(-6.583,0.675);

\draw[->] (-7.15,0.45) --(-6.92,0.2);

\node (O) at (-7.18,-0.98) [above] {$\mathcal{O}(\vec{x}_{1}+\vec{c}\,)$};

\draw[->] (-7.2,-0.42) --(-7.0,-0.22);

\draw[->] (-7.25,-0.89) --(-7.0,-1.12);

\node (O) at (-6.55,-1.7) [above] {${\tiny\mathcal{O}(\vec{x}_{1}+2\vec{c}\,)}$};

\draw[densely dotted] (-5.85,-1.46) --(-4.8,-1.46);

\draw[->] (-6.52,-0.28) --(-6.3,-0.52);

\draw[->] (-6.48,-1.1) --(-6.25,-0.85);

\node (O) at (-6.9,-0.32) [above] {$\mathcal{O}(2\vec{c}\,)$};

\draw[->] (-6.5,0.22) --(-6.3,0.45);

\draw[densely dotted] (-5.2,0.675) --(-4.8,0.675);

\node (O) at (-5.9,0.4) [above] {$\mathcal{O}(\vec{x}_1+2\vec{c}\,)$};

\node (O) at (-4.39,0.52) [above] {$\cdots$};

\node (O) at (-4.39,-1.61) [above] {$\cdots$};

\node (O) at (-8.5,-2.5) [above] {$\Gamma(\vect \mathbb{X}(2))$};

\node (O) at (-5.35,-0.2) [above] {$\cdots$};

\node (O) at (-5.35,-0.8) [above] {$\cdots$};

\draw (-2,0.6) ellipse (0.4 and 0.09 );
\draw (-2.4,0.6) --(-2.4,-1.5);
\draw (-1.6,0.6) --(-1.6,-1.5);

\node (O) at (-2.006,-2.5) [above] {$\Gamma({{\bf{T}}}_{\infty})$};

\draw (-1,0.6) ellipse (0.1 and 0.07 );
\draw (-1.1,0.6) --(-1.1,-1.5);
\draw (-0.9,0.6) --(-0.9,-1.5);

\draw (-0.5,0.6) ellipse (0.1 and 0.07 );
\draw (-0.6,0.6) --(-0.6,-1.5);
\draw (-0.4,0.6)  --(-0.4,-1.5);

\node (O) at (0.7,-0.8) [above] {$\cdots$};

\draw (2.0,0.6) ellipse (0.1 and 0.07 );
\draw (2.1,0.6) --(2.1,-1.5);
\draw (1.9,0.6) --(1.9,-1.5);

\draw (2.5,0.6) ellipse (0.1 and 0.07 );
\draw (2.6,0.6) --(2.6,-1.5);
\draw (2.4,0.6) --(2.4,-1.5);

\node[rotate = 0] at (0.78,-1.8) {$\underbrace{\hspace{5.5cm}}$};

\node (O) at (0.84,-2.5) [above] {$\{\Gamma({{\bf{T}}_{{\bm{x}}}) \}_{{\bm{x}} \in \mathbb{P}^{1}\setminus\{\infty\} }}$};

\end{tikzpicture}$$
where  the horizontal dotted lines are identified in $\Gamma(\vect \mathbb{X}(2))$,
and $\Gamma({\rm{coh}}_0 \mathbb{X}(2))=\Gamma({{\bf{T}}}_{\infty})\cup
\{\Gamma({{\bf{T}}_{{\bm{x}}}) \}_{{\bm{x}} \in \mathbb{P}^{1}\setminus\{\infty\} }}$.
Here, by ${\mathrm{coh}}_0 \mathbb{X}(2)$ and $\vect \mathbb{X}(2)$ we mean the full subcategories of torsion sheaves and vector bundles, respectively. In particular, ${{\bf{T}}}_{\infty}$ is a tube category of rank two generated by the simples $S_{1,0}, S_{1,1}$, and ${{\bf{T}}}_{{\bm{x}}}$ is a tube category of rank one generated by the simple $S_{{\textbf{\textit{x}}}}$ for any ${\bm{x}} \in \mathbb{P}^{1}\setminus\{\infty\}$.

By the identification of each finest SOD $(\langle E_1\rangle, \langle E_2\rangle,\langle E_3\rangle)$ with its corresponding full exceptional sequence $(E_1, E_2,E_3)$, the mutation graph of the finest SODs of $\mathcal{D}=\mathcal{D}^{b}(\coh \mathbb{X}(2))$ has the shape depicted below.

\begin{center}
\[\xymatrix@C0.00001pt@R0.81pt{
&&&&&&&&&&&&&&&&&&&&&&&\\
&&&&&&\vdots&&&&\vdots&&&&\vdots&&\\
&&&&&&&&&&&&&&&\\
&&&&&&&&&&&&&&&\\
&&&&&&&&&&&&&&&\\
&&&&&&&&&&&&&&&\\
&&&&&&&&&&&&&&&\\
&&&&&&&&&&&&&&&\\
&&&&&&&&&&&&&&&\\
&&&&&&&&&&&&&&&\\
&&&&&&
{\begin{smallmatrix}
(\mathcal{O}(-\vec{c}),\mathcal{O},
S_{1,0})
\end{smallmatrix}}\ar@[blue][rrrrdddd]
&&&&
{\begin{smallmatrix}
(\mathcal{O}(-\vec{c}),S_{1,0}, \mathcal{O}(\vec{x}_1-\vec{c}))
\end{smallmatrix}}\ar[uuuu]\ar@[blue][llll]
&&\\
&&&&&&&&&&&&&\\
&&&&&&&&&&&&&\\
&&&&&&&&&&&&&\\
&&&&&&&&&&
{\begin{smallmatrix}
(\mathcal{O}(-\vec{c}),
\mathcal{O}(\vec{x}_1-\vec{c}),
\mathcal{O})
\end{smallmatrix}}\ar@[blue][uuuu]\ar[rrrr]
&&&&
{\begin{smallmatrix}
(S_{1,1},\mathcal{O}(-\vec{c}),
\mathcal{O})
\end{smallmatrix}}\ar[lllldddd]
&&\\
&&&&&&&&&&&&\\
&&&&&&&&&&&&\\
&&&&&&&&&&&&\\
&&&&&&
{\begin{smallmatrix}
(\mathcal{O}(\vec{x}_1-\vec{c}),
\mathcal{O}(\vec{x}_1),
S_{1,1})
\end{smallmatrix}}\ar@[blue][rrrrdddd]
\ar@/^4pc/[rrrruuuuuuuuuuuuuuuullll]
&&&&
{\begin{smallmatrix}
(\mathcal{O}(\vec{x}_1-\vec{c}),S_{1,1},
\mathcal{O})
\end{smallmatrix}}\ar[uuuu]\ar@[blue][llll]
&&\\
&&&&&&&&&&\\
&&&&&&&&&&\\
&&&&&&&&&&\\
&&&&&&&&&&
{\begin{smallmatrix}
(\mathcal{O}(\vec{x}_1-\vec{c}),\mathcal{O},
\mathcal{O}(\vec{x}_1))
\end{smallmatrix}}\ar@[blue][uuuu]\ar[rrrr]
&&&&
{\begin{smallmatrix}
(S_{1,0},
\mathcal{O}(\vec{x}_1-\vec{c}),
\mathcal{O}(\vec{x}_1))
\end{smallmatrix}}\ar[lllldddd]
\ar@[blue]@/_4pc/[rrrruuuuuuuuuuuuuuuullll]
&&\\
&&&&&&&&&&\\
&&&&&&&&&&\\
&&&&&&&&&&\\
&&&&&&
{\begin{smallmatrix}
(\mathcal{O},\mathcal{O}(\vec{c}),
S_{1,0})
\end{smallmatrix}}\ar@[blue][rrrrdddd]
\ar@/^4pc/[rrrruuuuuuuuuuuuuuuullll]
&&&&
{\begin{smallmatrix}
(\mathcal{O},S_{1,0},
\mathcal{O}(\vec{x}_1))
\end{smallmatrix}}\ar[uuuu]\ar@[blue][llll]
&&\\
&&&&&&&&&&&&\\
&&&&&&&&&&&&\\
&&&&&&&&&&&&\\
&&&&&&&&&&
{\begin{smallmatrix}
(\mathcal{O},\mathcal{O}(\vec{x}_1),
\mathcal{O}(\vec{c}))
\end{smallmatrix}}\ar@[blue][uuuu]\ar[rrrr]
&&&&
{\begin{smallmatrix}
(S_{1,1},\mathcal{O},
\mathcal{O}(\vec{c}))
\end{smallmatrix}}\ar[lllldddd]
\ar@[blue]@/_4pc/[rrrruuuuuuuuuuuuuuuullll]
&&\\
&&&&&&&&&&&&&\\
&&&&&&&&&&&&&\\
&&&&&&&&&&&&&\\
&&&&&&
{\begin{smallmatrix}
(\mathcal{O}(\vec{x}_1),
\mathcal{O}(\vec{x}_1+\vec{c}),
 S_{1,1})
\end{smallmatrix}}\ar@[blue][rrrrdddd]
\ar@/^4pc/[rrrruuuuuuuuuuuuuuuullll]
&&&&
{\begin{smallmatrix}
(\mathcal{O}(\vec{x}_1),S_{1,1},
\mathcal{O}(\vec{c}))
\end{smallmatrix}}\ar[uuuu]\ar@[blue][llll]
&&\\
&&&&&&&&&&&\\
&&&&&&&&&&&\\
&&&&&&&&&&&\\
&&&&&&&&&&
{\begin{smallmatrix}
(\mathcal{O}(\vec{x}_1),\mathcal{O}(\vec{c}),
\mathcal{O}(\vec{x}_1+\vec{c}))
\end{smallmatrix}}\ar@[blue][uuuu]\ar[rrrr]
&&&&
{\begin{smallmatrix}
(S_{1,0},\mathcal{O}(\vec{x}_1),
\mathcal{O}(\vec{x}_1+\vec{c}))
\end{smallmatrix}}\ar[lllldddd]\quad
\ar@[blue]@/_4pc/[rrrruuuuuuuuuuuuuuuullll]
&&\\
&&&&&&&&&\\
&&&&&&&&&\\
&&&&&&&&&\\
&&&&&&&&&&
{\begin{smallmatrix}
(\mathcal{O}(\vec{c}),S_{1,0},
\mathcal{O}(\vec{x}_1+\vec{c}))
\end{smallmatrix}}\ar[uuuu]
&&\\
&&&&&&&&\\
&&&&&&&&\\
&&&&&&&&&&&&&&&\\
&&&&&&&&&&\ar[uuuu]&&\\
&&&&&&&&&&&\\
&&&&&&\vdots&&&&\vdots&&&&\vdots&&\\
  }\]
\end{center}

There is a decomposition of  $\Sigma(\mathcal{D})$ and hence of its mutation graph:
$$\Sigma(\mathcal{D})=
\mathop{\dot{\bigcup}}\limits_{\vec{x}\in\mathbb{L}(2)}^{}
Q^{-1}_{\langle \mathcal{O}(\vec{x})\rangle}(\Sigma(\mathcal{D}/\langle \mathcal{O}(\vec{x})\rangle))\;\;\dot{\bigcup}\;\;
Q^{-1}_{\langle S_{1,0}\rangle}(\Sigma(\mathcal{D}/\langle S_{1,0}\rangle))\;\;\dot{\bigcup}\;\; Q^{-1}_{\langle S_{1,1}\rangle}(\Sigma(\mathcal{D}/\langle S_{1,1}\rangle)),
$$
where each quotient $\mathcal{D}/\langle \mathcal{O}(\vec{x})\rangle$ is of type $A_2$, and each $\mathcal{D}/\langle S_{1,j}\rangle$ is of type $\mathbb{P}^{1}$.

Furthermore, by Theorem \ref{connectedness reduction}, the connectivity of the mutation graph of $\Sigma(\mathcal{D})$ is equivalent to that of the blue-arrow component graph in Remark \ref{mutation graph of subgraphs}.
Consequently, both connectivity properties hold by Theorem \ref{finest sod and full exceptional sequence}, on account of the transitive braid group action on full exceptional sequences in $\coh \mathbb{X}(2)$ \cite{kume}.

\bigskip

{\bf Acknowledgments.}
This work was supported by the National Natural Science Foundation of China
(Grant No. 12031007) and the Natural Science Foundation of Xiamen, China (Grant No. 3502Z20227184).

\end{document}